\newcommand\org@maketitle{}
\newcommand\@authors{}
\let\org@maketitle\maketitle
\def\maketitle{%
	% fixing authors for amsbook
	\let\@authors\authors
	\nxandlist{; }{ and }{; }\@authors
	% end fix
	\hypersetup{
		linktocpage=true,
		pdftitle={\@title},
                pdfauthor={\@authors},
                pdfsubject={\subjclassname. \@subjclass},
		pdfkeywords={\@keywords}
	}%
	\org@maketitle
}
\renewcommand{\PrintDOI}[1]{\doi{#1}}
\newcommand{\arxiv}[1]{arXiv:\href{https://arxiv.org/abs/#1}{#1}}
\numberwithin{equation}{section}
\newtheorem{theorem}{Theorem}[section]
\newtheorem{teo}[theorem]{Theorem}
\newtheorem{lem}[theorem]{Lemma}
\newtheorem{cor}[theorem]{Corollary}
\newtheorem{pro}[theorem]{Proposition}
\theoremstyle{definition}
\newtheorem{defi}[theorem]{Definition}
\theoremstyle{remark}
\newtheorem{oss}[theorem]{Remark}
\newcommand{\e}{\varepsilon}
\newcommand{\R}{\mathbb{R}}
\newcommand{\N}{\mathbb{N}}
\newcommand{\D}{\nabla}
\newcommand{\supp}{\operatorname{spt}}
\renewcommand{\div}{\operatorname{div}}
\newcommand{\loc}{\mathrm{loc}}
\DeclareMathOperator*{\esssup}{ess\,sup}
\def\XXint#1#2#3{{\setbox0=\hbox{$#1{#2#3}{\int}$}
    \vcenter{\hbox{$#2#3$}}\kern-.5\wd0}}
\mathchardef\ordinarycolon\mathcode`\:
\author{Alessandro Audrito}
\author{Gabriele Fioravanti}
\author{Stefano Vita}
\address{Alessandro Audrito\newline\indent
Dipartimento di Scienze Matematiche ``Giuseppe Luigi Lagrange''
\newline\indent
Politecnico di Torino
\newline\indent
Corso Duca degli Abruzzi 24, 10129, Torino, Italy}
\email{alessandro.audrito@polito.it}
\address{Gabriele Fioravanti\newline\indent
Dipartimento di Matematica ``Giuseppe Peano''
\newline\indent
Universit\`a degli Studi di Torino
\newline\indent
Via Carlo Alberto 10, 10124, Torino, Italy}
\email{gabriele.fioravanti@unito.it}
\address{Stefano Vita\newline\indent
Dipartimento di Matematica ``Felice Casorati''
\newline\indent
Universit\`a di Pavia
\newline\indent
Via Ferrata 5, 27100, Pavia, Italy}
\email{stefano.vita@unipv.it}
\title[Higher order Schauder estimates for degenerate or singular parabolic equations]{Higher order Schauder estimates for degenerate
or singular parabolic equations}
\subjclass[2020] {
35B65, % Smoothness and regularity of solutions to PDEs
58J35, % Heat and other parabolic equation methods
35B44, % Blow-up
35B53, % Liouville theorems
}
\keywords{Weighted parabolic equations, degenerate/singular weights, uniform regularity estimates.}
\begin{document}

\begin{abstract}
In this paper, we complete the analysis initiated in \cite{AudFioVit24} establishing some higher order $C^{k+2,\alpha}$ Schauder estimates ($k \in \N$) for a a class of parabolic equations with weights that are degenerate/singular on a characteristic hyperplane. The $C^{2,\alpha}$-estimates are obtained through a blow-up argument and a Liouville theorem, while the higher order estimates are obtained by a fine iteration procedure. As a byproduct, we present two applications. First, we prove similar Schauder estimates when the degeneracy/singularity of the weight occurs on a regular hypersurface of cylindrical type. Second, we provide an alternative proof of the higher order boundary Harnack principles established in \cite{BanGar16,Kuk22}.
\end{abstract}

\maketitle

\section{Introduction}

In this paper we complete the study started in \cite{AudFioVit24}, establishing some higher order Schauder regularity estimates for solutions to a special class of parabolic equations having weights which degenerate or explode on a characteristic hyperplane $\Sigma$ as $\mathrm{dist}(\cdot,\Sigma)^a$, where $a>-1$ is a fixed parameter. More precisely, for every $k\in\mathbb N$, we prove local regularity estimates in $C^{k+2,\alpha}_p$ (parabolic H\"older) spaces ``up to'' $\Sigma$ for weak solutions to
\begin{equation}\label{eq:1}
\begin{cases}
    y^a \partial_tu - {\div }(y^{a} A\nabla u) = y^a f + {\div}(y^{a} F) \quad &\text{in } Q_1^+ \\
    \displaystyle{\lim_{y\to0^+}} y^a(A\nabla u+F)\cdot e_{N+1}=0              &\text{on }\partial^0Q_1^+.
\end{cases}
\end{equation}
Here $N\ge1$, $(z,t)=(x,y,t)\in\R^N\times\R\times\R$, $\Sigma=\{y=0\}$ and $\mathrm{dist}(P,\Sigma)^a = y^a$. Further, $Q_1^+ := B_1^+\times I_1$ is the unit upper-half cylinder and $\partial^0 Q_1^+ = Q_1 \cap\{y=0\}$, where $B_1^+ := B_1\cap\{y>0\}$ ($B_1\subset\R^{N+1}$ is the unit ball centered at $0$) and $I_1:=(-1,1)$, while the symbols $\nabla$ and $\div$ denote the gradient and the divergence w.r.t. the spatial variable $z$, respectively.

The function $A:Q_1^+\to \R^{N+1,N+1}$ is assumed to be symmetric and to satisfy the following ellipticity condition: there exist $0 < \lambda \le \Lambda < +\infty$ such that
\begin{equation}\label{eq:UnifEll}
\lambda|\xi|^2\le A(z,t)\xi\cdot\xi\le\Lambda|\xi|^2,
\end{equation}
for all $\xi\in\mathbb{R}^{N+1}$ and a.e. $(z,t)\in Q_1^+$, while $f:Q_1^+ \to \R$ and $F:Q_1^+ \to \R^{N+1}$ are given functions belonging to some suitable functional spaces. The notion of weak solution  is given in Definition \ref{def.solution}. 

\

Our theory fits into the context of the regularity theory for linear non-uniformly parabolic equations; in particular, second order linear parabolic equations where the lack of uniform parabolicity is entailed by a weight term. Among all the papers on this topic, we quote the pioneering works \cite{FabKenSer82,ChiSer85} where Harnack estimates and local H\"older continuity of solutions have been established when the weight $\omega$ either comes from quasiconformal mappings or belongs to the $A_2$-Muckenhoupt class, that is,
\begin{equation*}\label{muckA2}
        \sup_{B} \Big(\frac{1}{|B|}\int_{B} \omega \Big)\Big(\frac{1}{|B|}\int_{B} \omega^{-1}\Big)\le C,
\end{equation*}
where the supremum is taken over every ball $B\subset\mathbb{R}^{N+1}$. 

The weight term $|y|^a$ we are considering here is $A_2$-Muckenhoupt in the range $a\in(-1,1)$.
However, the peculiar geometry of the degeneracy/singularity set of our weight - the characteristic hyperplane $\Sigma$ - allows us to get more information compared to the general theory quoted above and to deal with the full range $a>-1$.

In the spirit of the elliptic framework, see \cite{SirTerVit21a,SirTerVit21b,TerTorVit22}, one can build a complete Schauder theory in $C^{k,\alpha}_p$ spaces for weak solutions to \eqref{eq:1}: this is the main issue of the present paper, together with its first part \cite{AudFioVit24}. Let us remark here that the regularity we obtain strongly relies on the \textsl{natural conormal boundary condition} 
$$\lim_{y\to0^+}y^a(A\nabla u+F)\cdot e_{N+1}=0$$
we impose on the characteristic hyperplane $\Sigma$: the reader should keep in mind that the function $y^{1-a}$ is a solution to the homogeneous equation $\mathrm{div}(y^{a}\nabla(y^{1-a}))=0$ when $a<1$ with homogeneous Dirichlet boundary condition on $\Sigma$ but, if $a \in (0,1)$, it is no more than $(1-a)$-H\"older continuous up to $\Sigma$.

We also mention \cite{JeoVit23,DonJeoVit23} where Schauder estimates in the elliptic framework are obtained when data are of Dini type, and \cite{DongDirichlet,DonPha23} where the authors established some regularity estimates of Sobolev type for a wide class of parabolic equations including \eqref{eq:1} (see also \cite{MetNegSpi23a,MetNegSpi23b,MetNegSpi24,NegSpi24}).

Moreover, the study of weighted problems like \eqref{eq:1} is strongly related to the theory of \textsl{edge operators} \cite{Maz91,MazVer14}, and \textsl{nonlocal operators}. The latter relies in the connection between a class of fractional heat operators like $(\partial_t - \Delta)^{\frac{1-a}{2}}$ - possibly with variable coefficients - and their extension theories \cite{NysSan16,StiTor17,banerjee}, which represent the parabolic counterpart of \cite{CafSil07}. Within this context, Schauder estimates for solutions to fractional parabolic equations involving $(\partial_t-\div_x(A(x)\nabla_x))^{\frac{1-a}{2}}$ have been established in \cite{BisSti21}. Respect to our notation, this corresponds to regularity estimates in the $(x,t)$-variables on $\Sigma$ and $a\in(-1,1)$ (see also \cite{BucKar17,CafSti16,DonKim13,Sil12}). Let us also mention that space analyticity (in the full $z$ variable) and smoothness in $(z,t)$ of solutions to equation \eqref{eq:1} were already available by \cite{BanGar23} when $a\in(-1,1)$ and coefficients are analytic and satisfy suitable extra assumptions.

It is worth mentioning that the study of such operators is central in numerous papers of the last years: we quote \cite{audrito,CAFFA-MELLET-SIRE} (reaction-diffusion equations), \cite{DGPT17,AthCafMil18,BanDanGarPet21} (obstacle problems), \cite{SirTerTor20,audritoterracini} (nodal set analysis), \cite{HYDER} (nonlocal harmonic maps flow) and the references therein.

\smallskip

According to \cite{TerTorVit22} (elliptic setting), the Schauder estimates for equations with degenerate weights have a remarkable application in the context of the \emph{boundary Harnack principles}. Such boundary Harnack principles allow to ``compare the regularity'' of two solutions $u,v$ of the same equation ($u>0$) which vanish on the same portion of a fixed boundary. In particular, in rough domains such as Lipschitz, NTA or H\"older domains, the ratio $w=v/u$ is bounded up to the boundary where $u$ and $v$ vanish (in the first two cases $w$ is even H\"older continuous). The literature on the topic is extensive: we refer to \cite{DeSSav20,DeSSav22} for a unified approach (equations in divergence and non-divergence form) and an interesting review of the topic. Then, when the boundary is $C^{k,\alpha}$, the \emph{higher order boundary Harnack principle} improves the regularity of the quotient $w$ up to $C^{k,\alpha}$, see \cite{DeSSav15} for the elliptic case and \cite{BanGar16,Kuk22} for its parabolic counterpart. We will see that our Schauder estimates for weighted equations provides an alternative proof of some of the results contained in the last two references.

Notably, the weighted elliptic Schauder theory developed in \cite{SirTerVit21a,TerTorVit22} was used in the recent papers \cite{AllKriSha24} and \cite{ResRos24} to derive higher regularity of free interfaces for some semilinear free boundary problems (Alt-Phillips type). We wonder if the parabolic Schauder theory we develop here, together with \cite{AudFioVit24}, may help to address similar results for semilinear free boundary problems of parabolic type as well.

\subsection*{Main results}
This paper is devoted to the higher order Schauder estimates for weak solutions to \eqref{eq:1}. Below the statement of our main result.
\begin{teo}\label{teo1} 
    Let $N\ge1$, $a>-1$, $r\in(0,1)$, $\alpha\in(0,1)$ and $k\in\N$. Let $A\in C^{k+1,\alpha}_p(Q_1^+)$ satisfying \eqref{eq:UnifEll}, $f\in C^{k,\alpha}_p(Q_1^+)$ and $F\in C^{k+1,\alpha}_p(Q_1^+)$ and let $u$ be a weak solution to \eqref{eq:1}. Then, there exists $C>0$ depending only on $N$, $a$, $\lambda$, $\Lambda$, $r$, $\alpha$ and $\|A\|_{C^{k+1,\alpha}_p(Q_1^+)}$ such that
    \begin{equation}\label{eq:ck:1}
    \|u\|_{C^{k+2,\alpha}_p(Q_{r}^+)}\le C\Big(
    \|u\|_{L^2(Q_1^+,y^a)}+
    \|f\|_{C^{k,\alpha}_p(Q_1)}+
    \|F\|_{C^{k+1,\alpha}_p(Q_1)}
    \Big).
    \end{equation}
\end{teo}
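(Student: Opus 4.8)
The plan is to establish \eqref{eq:ck:1} by induction on $k$: the case $k=0$ (the $C^{2,\alpha}_p$ estimate) is obtained by a blow-up/compactness argument together with a Liouville theorem, and the step $k-1\rightarrow k$ by a differentiation and bootstrap argument that exploits the invariance of the weight $y^a$ under translations in $(x,t)$ for the tangential and time derivatives, and the equation itself for the normal one. Throughout one proves the \emph{a priori} estimate on nested cylinders $Q_r^+\Subset Q_{r'}^+\Subset Q_1^+$ for solutions already smooth in the interior (the general case following from the approximation scheme and energy estimates of \cite{AudFioVit24}), reabsorbing lower order terms by interpolation and by the $C^{0,\alpha}_p$- and $C^{1,\alpha}_p$-estimates of \cite{AudFioVit24}, so that at the end only the right-hand side of \eqref{eq:ck:1} remains.

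\emph{Base case.} I would show that for each $P_0\in\overline{Q_{1/2}^+}$ there is a parabolic polynomial $p_{P_0}$ of degree $\le2$, solving the frozen homogeneous problem with matrix $A(P_0)$ and conormal condition on $\Sigma$, such that $\|u-p_{P_0}\|_{L^\infty(Q_\rho^+(P_0))}\le C\rho^{2+\alpha}$ for all small $\rho$; together with the Campanato characterization of $C^{2,\alpha}_p$ and the $C^{1,\alpha}_p$-theory of \cite{AudFioVit24} this gives \eqref{eq:ck:1} for $k=0$. If the decay failed there would be solutions $u_m$, points $P_m$ and radii $\rho_m\to0$ violating it; subtracting optimal approximating polynomials, rescaling parabolically (keeping $\Sigma$ fixed) and normalizing, one extracts --- via the compact embeddings and Caccioppoli inequalities for $y^a$ of \cite{AudFioVit24} --- a limit $v$, an entire solution of $y^a\partial_t v-\div(y^a\bar A\nabla v)=0$ with $\lim_{y\to0^+}y^a\bar A\nabla v\cdot e_{N+1}=0$ and growth $|v(z,t)|\le C(1+|z|^2+|t|)^{(2+\alpha)/2}$. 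The Liouville theorem forces $v$ to be a parabolic polynomial of degree $\le2$, contradicting the normalization. After a linear change of variables fixing $\Sigma$ and $y^a$ one may take $\bar A$ block-diagonal and normalized, so that the model becomes the Bessel-type heat operator $y^a\partial_t-y^a\Delta_x-\partial_y(y^a\partial_y)$, whose parabolic-polynomial solutions of degree $\le2$ are explicit, finitely many and all $C^\infty_p$ up to $\Sigma$ (they involve $1$, $x_i$, $x_ix_j$, $t$ and $y^2+2(a+1)t$); the conormal condition excludes, in particular, the ``bad'' homogeneous solutions typified by $y^{1-a}$, which is only $(1-a)$-H\"older up to $\Sigma$ when $a\in(0,1)$.

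\emph{Inductive step: tangential and time derivatives.} Assume \eqref{eq:ck:1} for all orders $<k$; then $u\in C^{k+1,\alpha}_p$, hence $\nabla u\in C^{k,\alpha}_p$. Since $y^a$ does not depend on $(x,t)$, each $\partial_{x_i}u$, $i=1,\dots,N$, is a weak solution of a problem of type \eqref{eq:1} with the same weight and matrix, source $\widetilde f:=\partial_{x_i}f\in C^{k-1,\alpha}_p$, flux $\widetilde F:=\partial_{x_i}F+(\partial_{x_i}A)\nabla u\in C^{k,\alpha}_p$ and conormal condition $\lim_{y\to0^+}y^a(A\nabla\partial_{x_i}u+\widetilde F)\cdot e_{N+1}=0$ (rigorously via tangential incremental quotients and a limit using the uniform bounds). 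The inductive hypothesis at order $k-1$ then gives $\partial_{x_i}u\in C^{k+1,\alpha}_p$ with the estimate controlled by the right-hand side of \eqref{eq:ck:1}. Similarly $\partial_t u$ solves a problem of the same type and, being two parabolic orders below $u$, is treated by the inductive hypothesis at order $k-2$ (for $k=1$, by the base case), giving $\partial_t u\in C^{k,\alpha}_p$. Hence every derivative $\partial_t^m\partial_z^\beta u$ with $2m+|\beta|\le k+2$ and with $m\ge1$ or $\beta$ not a multiple of $e_{N+1}$ is already estimated in $C^{0,\alpha}_p$.

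\emph{Inductive step: the normal derivative.} It remains to bound $\partial_y^{\,k+2}u$ up to $\Sigma$. In non-divergence form \eqref{eq:1} reads
\begin{equation*}
A_{N+1,N+1}\,\partial_{yy}u=\partial_t u-f-\div F-\!\!\sum_{(i,j)\ne(N+1,N+1)}\!\!A_{ij}\partial_{ij}u-\sum_{i,j}(\partial_iA_{ij})\partial_j u-\frac ay\,g,\qquad g:=(A\nabla u+F)\cdot e_{N+1},
\end{equation*}
and by \eqref{eq:UnifEll} and the previous step all terms on the right except $\tfrac ay g$ lie in $C^{k,\alpha}_p$ (each surviving $\partial_{ij}u$ carries a tangential index). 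For the singular term one uses the ``fine iteration'': since $y^a$ is $x$-independent, $g$ satisfies the exact identity $\partial_y(y^a g)=y^a h$ with $h:=\partial_t u-\div_x\bigl((A\nabla u+F)'\bigr)-f\in C^{k,\alpha}_p$ (here $(\cdot)'$ denotes the first $N$ components, and $h$ involves $u$ only through derivatives already estimated above). Integrating in $y$ and using the conormal condition, $\lim_{\varepsilon\to0^+}\varepsilon^a g(\cdot,\varepsilon,\cdot)=0$ (automatic for $a\ge0$, as $g$ is continuous up to $\Sigma$), yields the weighted Hadamard formula
\begin{equation*}
\frac{g(z,y,t)}{y}=\int_0^1\sigma^a\,h(z,\sigma y,t)\,d\sigma ,
\end{equation*}
whence, differentiating under the integral and using $a>-1$, one gets $g/y\in C^{k,\alpha}_p$ with no loss of derivatives. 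Therefore $\partial_{yy}u\in C^{k,\alpha}_p$, so $u\in C^{k+2,\alpha}_p$ with the bound controlled by the right-hand side of \eqref{eq:ck:1}, closing the induction. I expect the two genuinely non-routine points to be the Liouville theorem behind the base case and this normal-direction iteration --- in particular checking that the conormal condition together with the equation forces $g$ to vanish to first order on $\Sigma$, with the quantitative control needed to keep the constants of the form prescribed in \eqref{eq:ck:1}.
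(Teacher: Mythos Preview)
Your base case is correct and is essentially the paper's approach: a blow-up/contradiction argument combined with the Liouville theorem for the weighted heat operator, followed by approximation. The paper carries this out as an \emph{a priori} interpolation-type estimate (Proposition~\ref{teo-C^2,alpha}) rather than via Campanato decay, but the content is the same.

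The inductive step, however, has a genuine gap in the treatment of $\partial_t u$, and this gap propagates to your normal-direction argument. You want to differentiate the equation in $t$ and apply the inductive hypothesis at order $k-2$ to the resulting problem for $\partial_t u$, whose source term is $\partial_t f$. But $f\in C^{k,\alpha}_p$ gives control on $\partial_t f$ only when $k\ge 2$ (since $\partial_t$ has parabolic order $2$); for $k=1$ the quantity $\partial_t f$ is not even defined by the assumption $f\in C^{1,\alpha}_p$, so the step $k=0\to k=1$ fails. Worse, your ``Hadamard formula'' for $g/y$ requires $h=\partial_t u-\div_x((A\nabla u+F)')-f\in C^{k,\alpha}_p$, hence $\partial_t u\in C^{k,\alpha}_p$, which is exactly what you cannot get without the $y$-regularity: the two steps are circular.

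The paper breaks this circularity with a different device (Lemma~\ref{approximation C^3}): after passing to the diagonal part $D=\mathrm{diag}(A)$ one shows that the second-order weighted normal derivative
\[
w:=y^{-a}\partial_y\Big(y^a\Big(\partial_y u+\frac{g}{\mu}\Big)\Big),\qquad g=\bar F_{N+1},\ \mu=A_{N+1,N+1},
\]
is itself a weak solution of a problem of type \eqref{eq:1} with \emph{only a divergence-form right-hand side} $\tilde F$ (no source $f$), where $\tilde F$ is bounded in $C^{k,\alpha}_p$ by quantities already controlled. The inductive hypothesis (or, at the first step, the $C^{1,\alpha}_p$ theory) applied to $w$ yields the $y$-regularity directly, and only \emph{afterwards} is $\partial_t u$ recovered from the equation $\partial_t u=y^{-a}\div(y^a(A\nabla u+F))+f$, where now every term on the right is under control. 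Your integral representation for $g/y$ is correct (it is the paper's Lemma~\ref{lemma-2.4-TTV}), but it cannot be invoked at the point you do because $h$ is not yet known to lie in $C^{k,\alpha}_p$; the missing idea is precisely this $w$-equation that eliminates the $f$-source.
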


In our previous work \cite{AudFioVit24}, we established $C^{0,\alpha}_p$ and $C^{1,\alpha}_p$ estimates for solutions to \eqref{eq:1}, under suitable assumptions on coefficients and data, see \cite{AudFioVit24}*{Theorem 1.1}. These are obtained through a regularization of the weight and approximation, that is, by proving uniform-in-$\varepsilon$ regularity estimates for solutions $u_\varepsilon$ of the equation with the regularized weight $(\varepsilon^2+y^2)^{a/2}$ and then passing to the limit as $\varepsilon\to0^+$. The strategy to prove $C^{2,\alpha}_p$ (or higher order) estimates cannot rely on such $\varepsilon$-regularization scheme, since the $\varepsilon$-stability of the $C^{2,\alpha}_p$ estimate is false in general, even in the elliptic framework, see \cite{SirTerVit21a}*{Remark 5.4}.

\smallskip

Before sketching the main steps of the proof of Theorem \ref{teo1}, it is important to highlight the following facts, which substantially differ our strategy from the existing literature:
\begin{itemize}
\item In the \emph{weighted elliptic framework} (see \cite{SirTerVit21a}), as soon as the $C^{1,\alpha}$ regularity is available, one can iterate it on derivatives. This is obtained in two steps: first, one notice that, since the weighted elliptic operator commutes with all but one derivatives, $\partial_{x_i}u$ is also a solution for any $i=1,...,N$ (and so $\partial_{x_i}u$ gains regularity); then, 
the operator itself gives the regularity of the last derivative $\partial_yu$. Formally, this is because, in the special case $A = I$, one can re-write the equation as
\[
-\partial_{yy} u -\partial_y F\cdot e_{N+1} - \frac{a}{y} (\partial_y u + F\cdot e_{N+1}) = f + \div_x F + \Delta_x u,
\]
and thus, if $\Delta_x u$ is smooth, then $\partial_yu$ is smooth by ODE methods (of course, provided that the data are smooth as well).

\item In the \emph{non-weighted parabolic framework} (see \cite{Lie96}), the idea is roughly the same: if $\Delta_x u$ is smooth, then the equation 
\[
\partial_t u = f + \div F + \Delta_x u 
\]
yields smoothness of $\partial_t u$. 

\item In the present \emph{degenerate parabolic setting}, the ``degenerate'' variables are two, $y$ and $t$, and the above strategies do not apply. In particular, the induction argument requires, as starting point, the $C^{2,\alpha}_p$ regularity of weak solutions (see Proposition \ref{teo-C^2,alpha}).
\end{itemize}
Given the above remarks, our approach  relies on a priori estimates and a regularization procedure by convolution with standard mollifiers. More precisely:

\smallskip

For the $C^{2,\alpha}_p$ regularity:
\begin{itemize}
\item[(i)] We establish some \emph{a priori} $C^{2,\alpha}_p$ estimates in Proposition \ref{teo-C^2,alpha} using a blow-up argument combined with a Liouville theorem (see Theorem \ref{teo:polynomial:liouville} below), in the spirit of \cite{simon} (see also \cite{SirTerVit21a} in the weighted  elliptic setting).
\item[(ii)] We prove $C^{2,\alpha}_p$ regularity of weak solutions when the data are $C^\infty$ (see Lemma \ref{lemma:approx:c2}). In this step, the $C^{1,\alpha}_p$ regularity of weak solutions (see Theorem \ref{teo-C^1,alpha}) is crucial.
\item[(iii)] We use an approximation scheme to regularize \eqref{eq:1}, by convolution of the data with a family of standard mollifiers. Along the approximating sequence, the $C^{2,\alpha}_p$ regularity estimate extends to weak solutions with $f\in C^{0,\alpha}_p$ and $A,F\in C^{1,\alpha}_p$. In other words, we prove the \emph{a posteriori} regularity estimate in Theorem \ref{teo1} when $k=0$.
\end{itemize}

For the $C^{k+2,\alpha}$ regularity for every $k\geq1$:
\begin{itemize}

\item[(iv)] When the forcing term is zero, i.e. $f=0$, we iterate the regularity estimates previously obtained - i.e. the $C^{1,\alpha}_p$ and $C^{2,\alpha}_p$ regularity - on partial derivatives of solutions,  by using the same scheme as in the proof of Lemma \ref{lemma:approx:c2} and Theorem \ref{teo1} follows quite easily. 

\item[(v)] In the case of general forcing terms $f\in C^{k,\alpha}_p$ the argument of (iv) doesn't apply (at least for $k=1$), and hence we proceed as follows: we use the procedure described at points (i), (ii), (iii) at any order $k$. To be more precise, the $C^{k+2,\alpha}_p$ \emph{a priori} estimates are obtained inductively on $k$, starting from the $C^{2,\alpha}_p$ \emph{a priori} estimates proved at point (i). This part crucially uses a delicate analysis of a second order weighted-type derivative of solutions in $y$ (see Lemma \ref{approximation C^3}). The $C^{k+2,\alpha}_p$ regularity when the data are smooth (the analogous of point (ii)) is also proved by induction in Lemma \ref{lemma:infinity-regularity}. Finally, with the same regularization argument in (iii), we finally obtain Theorem \ref{teo1}.
\end{itemize}

As anticipated above, the proof of the \emph{a priori} $C^{2,\alpha}_p$ estimates strongly relies on the following Liouville theorem.
\begin{teo}\label{teo:polynomial:liouville}
   Let $a>-1$, $m\in\mathbb{N}$, $\gamma \in [0,m+1)$ and let $u$ be an entire solution to
   \begin{equation}\label{eq-liouville-polinomiale}
       \begin{cases}
       y^a\partial_t u-{\div}(y^a \nabla u)=0&\text{in }\mathbb{R}^{N+1}_+\times\mathbb{R},\\
       \displaystyle{\lim_{y\to0+}}y^a\partial_y u=0&\text{on }\partial\R_+^{N+1}\times\R.
   \end{cases}
   \end{equation}
Assume that
   \begin{equation}\label{growth-polinomiale}
       |u(z,t)|\le C(1+(|z|^2+|t|)^\gamma)^{1/2} \quad \text{ for a.e. } (z,t) \in \mathbb{R}^{N+1}_+\times\mathbb{R}.
   \end{equation}
Then $u$ is a polynomial with degree at most $m$ in $z$ and at most $\lfloor \frac{m}{2}\rfloor$ in $t$.
\end{teo}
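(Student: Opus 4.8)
\emph{Overall strategy.} I would argue by induction on $m$, using the $C^{0,\alpha}_p$ and $C^{1,\alpha}_p$ interior estimates already available (\cite{AudFioVit24}, i.e. Theorem \ref{teo-C^1,alpha}). Three mechanisms are at play: (a) rescaled interior estimates lower the growth exponent when $u$ is differentiated in a direction along which the operator of \eqref{eq-liouville-polinomiale} commutes, namely $x_1,\dots,x_N$ and $t$; (b) the equation together with the conormal condition on $\Sigma=\{y=0\}$ propagates the polynomial structure across $\Sigma$ in the degenerate variable $y$, where there is no commutation; (c) once $u$ is a polynomial, its degrees in $z$ and in $t$ are forced by \eqref{growth-polinomiale} via parabolic scaling. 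For the base case $m=0$ one has $\gamma\in[0,1)$: since $u$ solves the homogeneous problem on every $Q_R^+$, the rescaled gradient estimate gives $\sup_{Q^+_{R/2}}|\nabla u|\le CR^{-1}\sup_{Q^+_R}|u|\le CR^{\gamma-1}\to0$ as $R\to\infty$, hence $\nabla u\equiv0$; then $u=u(t)$ and the weak equation forces $\partial_t u\equiv0$, so $u$ is constant.

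\emph{Inductive step.} Assume the statement for $m-1$ and let $u$ satisfy \eqref{growth-polinomiale} with $\gamma<m+1$. Because the coefficients of \eqref{eq-liouville-polinomiale} depend neither on $x$ nor on $t$, every $\partial_{x_i}u$ is again an entire weak solution of the same problem, the conormal condition being preserved since $x_i$ is tangential to $\Sigma$; the same holds for $\partial_t u$, which is legitimized through difference quotients in $t$ and the classical second energy estimate (this also places $\partial_t u\in L^2_{\loc}(y^a)$ and uses only weighted parabolic theory of $A_2$-type, hence not the Schauder estimates). Rescaling the interior estimates gives $\sup_{Q^+_{R/2}}|\nabla u|\le CR^{\gamma-1}$ and $\sup_{Q^+_{R/2}}|\partial_t u|\le CR^{\gamma-2}$, so $\partial_{x_i}u$ and $\partial_t u$ satisfy \eqref{growth-polinomiale} with some exponent $<m$; by the inductive hypothesis they are polynomials. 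To upgrade this to ``$u$ is a polynomial'' one uses the degenerate direction: $P:=\partial_t u-\Delta_x u$ is now a polynomial, \eqref{eq-liouville-polinomiale} reads $\partial_y(y^a\partial_y u)=y^aP$, and integrating in $y$ from $0$ — using $\lim_{y\to0^+}y^a\partial_y u=0$ and the local integrability of $s^a$ near $0$ (here $a>-1$ is essential) — yields $y^a\partial_y u(x,y,t)=\int_0^y s^aP(x,s,t)\,ds=\sum_k\frac{P_k(x,t)}{a+k+1}\,y^{a+k+1}$, where $P=\sum_k P_k(x,t)y^k$ and $a+k+1>0$ for all $k\ge0$. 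Thus $\partial_y u$ is a polynomial, and integrating once more $u(x,y,t)=u(x,0,t)+(\text{polynomial in }y)$, while $u(x,0,t)$ is a polynomial because $(\partial_{x_i}u)(x,0,t)$ and $(\partial_t u)(x,0,t)$ are. Hence $u$ is a polynomial.

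\emph{Degree count.} Write $u=\sum_{\alpha,j}c_{\alpha,j}z^\alpha t^j$ and collect the parabolically homogeneous pieces: $u(\rho z,\rho^2 t)=\sum_l\rho^l Q_l(z,t)$ with $Q_l:=\sum_{|\alpha|+2j=l}c_{\alpha,j}z^\alpha t^j$. Let $l_\ast:=\max\{l:Q_l\not\equiv0\}$ and choose $(z_0,t_0)$ with $Q_{l_\ast}(z_0,t_0)\ne0$; then $\rho^{-l_\ast}u(\rho z_0,\rho^2t_0)\to Q_{l_\ast}(z_0,t_0)\ne0$ as $\rho\to\infty$, while \eqref{growth-polinomiale} gives $|u(\rho z_0,\rho^2t_0)|\le C\rho^\gamma$ for $\rho\ge1$. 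Dividing by $\rho^{l_\ast}$ and letting $\rho\to\infty$ forces $l_\ast\le\gamma<m+1$, hence $l_\ast\le m$; so every monomial of $u$ satisfies $|\alpha|+2j\le m$, which yields degree $\le m$ in $z$ and $j\le m/2$, i.e. degree $\le\lfloor m/2\rfloor$ in $t$.

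\emph{Main difficulty.} The delicate point is the degenerate variable $y$: the operator in \eqref{eq-liouville-polinomiale} does not commute with $\partial_y$, so the growth-reduction cannot be iterated there, and one must exploit the equation and the conormal condition directly via the $y$-integration above — precisely where $a>-1$ enters (integrability of $s^a$ at $0$ and non-vanishing of the denominators $a+k+1$). A secondary technical issue is obtaining the interior bound for $\partial_t u$ and treating $\partial_t u$ as an admissible weak solution \emph{without} invoking the $C^{2,\alpha}_p$ regularity (which is proved afterwards, relying on this very theorem); this I would handle with the standard parabolic energy/Steklov machinery valid for these weighted equations, combined with the $C^{0,\alpha}_p$ interior estimate applied to $\partial_t u$.
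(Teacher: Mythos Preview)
Your proposal is correct and offers a genuinely different route from the paper's own proof. The paper proceeds entirely at the $L^2$ level: it iterates the Caccioppoli inequalities \eqref{caccioppoli.inequality} and \eqref{caccioppoli.in.difference} on $\partial_x^\beta u$ and $\partial_t^j u$ until the weighted $L^2$-norm on $Q_R^+$ has negative homogeneity in $R$, forcing those derivatives to vanish; for the degenerate direction it does \emph{not} integrate but instead exploits the conjugate-weight trick (passing through $v=|y|^a\partial_y u$, which solves the equation with weight $|y|^{-a}$, and the iterated second-order operator $w_{j+1}=\partial_{yy}w_j+\tfrac{a}{y}\partial_y w_j$), shows some $w_k\equiv0$, and then solves the resulting chain of ODEs backward to recover the polynomial structure in $y$. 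By contrast, you run an induction on $m$, use the pointwise $C^{1,\alpha}_p$ and $C^{0,\alpha}_p$ estimates of \cite{AudFioVit24} (rescaled) to push $\partial_{x_i}u$ and $\partial_t u$ into the inductive hypothesis, and only then --- once $P:=\partial_t u-\Delta_x u$ is already known to be a polynomial --- integrate the one-dimensional identity $\partial_y(y^a\partial_y u)=y^aP$ from $0$ using the conormal condition. Your treatment of the $y$-variable is therefore more elementary (a single integration rather than an iterated conjugate scheme), at the price of invoking the $C^{1,\alpha}_p$ theory and the Steklov/energy machinery for $\partial_t u$ from the companion paper; the paper's argument is more self-contained, needing only the two Caccioppoli-type lemmas, and its conjugate-weight mechanism is of independent interest. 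Both approaches are valid and non-circular, and your final degree count via parabolic homogeneity matches the conclusion of the paper exactly.
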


As a consequence of our main theorem, we can treat more general equations with weights behaving as \emph{distance functions} to a $C^{k+2,\alpha}$ ($k\in\mathbb N$) hypersurface $\Gamma \subset \R^{N+1}$ (curved characteristic manifolds) that we introduce below. The case of weights behaving as \emph{distance functions} to a $C^{1,\alpha}$  hypersurface is treated in \cite[Corollary 1.3]{AudFioVit24}.

Such equations are set in cylindrical domains $\Omega^+\times(-1,1)$ of $\R^{N+2}$ which ``live'' on one side of $\Gamma\times(-1,1)$. Specifically, up to rotations and dilations, $0\in\Gamma$ and there exist a spacial direction $y$ and a function $\varphi\in C^{k+2,\alpha}(B_1\cap\{y=0\})$ with $\varphi(0)=0$ and $\nabla_x\varphi(0)=0$ such that
\begin{equation}\label{phi}
\Omega^+\cap B_1=\{y>\varphi(x)\}\cap B_1,\qquad \Gamma\cap B_1=\{y=\varphi(x)\}\cap B_1.
\end{equation}
Then, the family of weights $\delta = \delta(z)$ we consider behave as a distance function to $\Gamma$ in the sense that $\delta\in C^{k+2,\alpha}(\Omega^+\cap B_1)$, and
\begin{equation}\label{delta}
\begin{cases}
\delta>0 &\mathrm{in \ } \Omega^+\cap B_1\\
|\nabla\delta|\geq c_0>0 &\mathrm{in \ } \Omega^+\cap B_1\\
\delta=0 &\mathrm{on \ } \Gamma\cap B_1,
\end{cases}
\end{equation}
and we consider weighted equations of the form
\begin{equation}\label{eq:1:curve}
\begin{cases}
    \delta^a \partial_tu - {\div }(\delta^{a} A\nabla u) = \delta^a f + {\div}(\delta^{a} F) \quad &\text{in } (\Omega^+\cap B_1)\times(-1,1),\\
    \delta^a(A\nabla u+F)\cdot \nu=0              &\text{on }(\Gamma\cap B_1)\times(-1,1),
\end{cases}
\end{equation}
where $\nu$ is the unit outward normal vector to $\Omega^+$ on $\Gamma$. For a precise definition of solutions to \eqref{eq:1:curve} see \cite[Definition 7.2]{AudFioVit24}.

\begin{cor}\label{cor:C^1,alpha}
    Let $a>-1$, $k\in\mathbb N$, $\alpha\in(0,1)$ and $u$ be a weak solution to 
    \eqref{eq:1:curve}. Let $\varphi\in C^{k+2,\alpha}(B_1\cap\{y=0\})$ be the parametrization defined in \eqref{phi} and $\delta\in C^{k+2,\alpha}(\Omega^+\cap B_1)$ satisfying \eqref{delta}. 
    
    Let $A,F\in C^{k+1,\alpha}_p((\Omega^+\cap B_1)\times(-1,1))$, with $A$ satisfying \eqref{eq:UnifEll}, $f\in C^{k,\alpha}_p((\Omega^+\cap B_1)\times(-1,1))$. Then, there exists a constant $C>0$, depending on $N$, $a$, $\lambda$, $\Lambda$, $\alpha$, $c_0$, $\|A\|_{C^{k+1,\alpha}_p((\Omega^+\cap B_1)\times(-1,1))}$, $\|\varphi\|_{C^{k+2,\alpha}(B_1\cap\{y=0\})}$ and $\|\delta\|_{C^{k+2,\alpha}(\Omega^+\cap B_1)}$ such that
    \begin{equation*}
    \begin{aligned}
    \|u\|_{C^{k+2,\alpha}_p((\Omega^+\cap B_{1/2})\times(- 1/2,1/2))}\le C\Big(
    &\|u\|_{L^2((\Omega^+\cap B_1)\times(-1,1),\delta^a)} \\
    & \quad + \|f\|_{C^{k,\alpha}_p((\Omega^+\cap B_1)\times(-1,1))}+
    \|F\|_{C^{k+1,\alpha}_p((\Omega^+\cap B_1)\times(-1,1))}
    \Big).
    \end{aligned}
    \end{equation*}
\end{cor}

Finally, following the program of the elliptic setting (see \cite{TerTorVit22}), we provide an alternative proof of some \emph{parabolic higher order boundary Harnack principles} as in \cite{BanGar16,Kuk22}. Such kind of ``regularity comparison principle'' between two caloric functions $u,v$ (or solutions to more general parabolic equations), vanishing on the same fixed boundary, can be viewed as the Schauder regularity of their quotient $w=v/u$ which, in turns, satisfies a parabolic equation with degenerate weight $u^2$, see \eqref{eq:w:point}. After proper diffeomorphic transformations of the domain, the Schauder theory for the ratio $w$ follows as a byproduct of our main Theorem \ref{teo1}.

The ``regularity comparison principle'' is localized at boundary points which lie on the \textsl{lateral parabolic boundary} of a space-time domain. In other words, let us consider $u,v$ solutions of
\begin{equation}\label{BH}
\begin{cases}
\partial_tu-\mathrm{div}(A\nabla u)=g+Vu+b\cdot\nabla u &\mathrm{in} \ \Omega\cap Q_1\\
\partial_tv-\mathrm{div}(A\nabla v)=f+Vv+b\cdot\nabla v &\mathrm{in} \ \Omega\cap Q_1\\
u(z,t)\geq c_0 \, d_p((z,t),\partial\Omega\cap Q_1) &\mathrm{in} \ \Omega\cap Q_1\\
u=v=0 &\mathrm{on} \ \partial\Omega\cap Q_1,
\end{cases}
\end{equation}
where $A$, $V$, $b$, $g$ and $f$ are suitable data (see Theorem \ref{BHk+2alpha} below). Here, up to rotations, dilations and translations, $0$ belongs to the parabolic lateral boundary of $\Omega$; that is, there exists a parametrization $\varphi$ such that
\begin{equation}\label{lateral_boundary}
\Omega\cap Q_1=\{y>\varphi(x,t)\},\qquad \partial\Omega\cap Q_1=\{y=\varphi(x,t)\},
\end{equation}
with $\varphi(0)=0$ and $\nabla_x\varphi(0)=0$. Moreover, the parabolic distance to the boundary is defined as
\begin{equation*}
d_p((z,t),\partial\Omega\cap Q_1)=\inf_{(\zeta,\tau)\in\partial\Omega\cap Q_1}d_p((z,t),(\zeta,\tau)),
\end{equation*}
and the parabolic distance between points is defined in \eqref{eq:par:dist}.

We will present here the parabolic higher order boundary Harnack principle for equations in divergence form in $C^{k+2,\alpha}_p$-domains, $k\in\mathbb N$. However, let us stress the fact that the regularity assumptions we make on boundaries, coefficients and right hand sides, always allows to pass from non divergence to divergence form equations and viceversa, interchangeably. So, we are considering the same conditions set in \cite{BanGar16}, which are slightly more general compared to \cite{Kuk22}, where the assumptions on the drift terms are suboptimal. Actually, our approach allows us to treat equations with nontrivial forcing terms $g$ in the r.h.s. of the equation of $u$.

\begin{theorem}\label{BHk+2alpha}

Let $k\in\mathbb N$, $\alpha\in(0,1)$ and $u,v$ be solutions to \eqref{BH}. Let $\varphi\in C^{k+2,\alpha}_p(Q_1\cap\{y=0\})$ be the parametrization defined in \eqref{lateral_boundary}. Let $A,f,g\in C^{k+1,\alpha}_p(\Omega\cap Q_1)$, with $A$ satisfying \eqref{eq:UnifEll}, $V,b\in C^{k,\alpha}_p(\Omega\cap Q_1)$.

    Then, there exists a constant $C>0$, depending on $N$, $\lambda$, $\Lambda$, $c_0$, $\alpha$, $\|A\|_{C^{k+1,\alpha}_p(\Omega\cap Q_1)}$, $\|g\|_{C^{k+1,\alpha}_p(\Omega\cap Q_1)}$, $\|V\|_{C^{k,\alpha}_p(\Omega\cap Q_1)}$, $\|b\|_{C^{k,\alpha}_p(\Omega\cap Q_1)}$,  $\|\varphi\|_{C^{k+2,\alpha}_p(Q_1\cap\{y=0\})}$  and $\|u\|_{L^2(\Omega\cap Q_1)}$ such that
    \begin{equation*}
    \begin{aligned}
    \left\|\frac{v}{u}\right\|_{C^{k+2,\alpha}_p(\Omega\cap Q_{1/2})}\le C\Big(
    \|v\|_{L^2(\Omega\cap Q_1)} + \|f\|_{C^{k+1,\alpha}_p(\Omega\cap Q_{1})} \Big).
    \end{aligned}
    \end{equation*}

\end{theorem}

\section{Preliminaries}
In this section we introduce some preliminary notions from \cite{AudFioVit24} (parabolic H\"older spaces, weak solutions, and so on). Further, we prove some auxiliary/technical results we will repeatedly use throughout the paper. 

We begin with the definitions of the parabolic H\"older spaces, see \cite{Lie96}*{Chapter 4} and \cite{LSU}*{Chapter 1}.
\subsection{Parabolic H\"older spaces}\label{section:parabolic:holder;spaces}
Let $\Omega\subset\mathbb{R}^{N+1}\times\R$ be an open subset and $u:\Omega\to\mathbb{R}$. The parabolic distance $d_p:\Omega\times\Omega\to\mathbb{R}$ is defined by
\begin{equation}\label{eq:par:dist}
d_p((z,t),(\zeta,\tau)) := (|z-\zeta|^2+|t-\tau|)^{1/2}, 
\end{equation}
for all $(z,t),(\zeta,\tau)\in\Omega$, where $z,\zeta\in\mathbb{R}^{N+1}$, $t,\tau\in\mathbb{R}$.
For $\alpha\in(0,1]$, we define the seminorms
$$
[u]_{C^{0,\alpha}_p(\Omega)} := \sup_{\substack{(z,t),(\zeta,\tau) \in\Omega \\ (z,t)\not=(\zeta,\tau)}}\frac{|u(z,t)-u(\zeta,\tau)|}{(|z-\zeta|^2+|t-\tau|)^{\alpha/2}}, \qquad [u]_{C^{0,\alpha}_t(\Omega)} := \sup_{\substack{(z,t),(z,\tau) \in\Omega \\ t\not=\tau}}\frac{|u(z,t)-u(z,\tau)|}{|t-\tau|^\alpha},
$$
and the norm
\[
\|u\|_{C^{0,\alpha}_p(\Omega)} := \|u\|_{L^\infty(\Omega)} + [u]_{C^{0,\alpha}_p(\Omega)}.
\]
If $\beta\in\mathbb{N}^{N+1}$ is a multi-index and $k \geq 1$, we define the seminorms 
$$
[u]_{C_p^{k,\alpha}(\Omega)}:=\sum_{|\beta|+2j=k}[\partial_x^\beta\partial_t^j u]_{C^{0,\alpha}_p(\Omega)} + [u]_{C^{k-1,\frac{1+\alpha}{2}}_t(\Omega)}, \qquad [u]_{C^{k,\frac{1+\alpha}{2}}_t(\Omega)} := \sum_{|\beta|+2j=k}[\partial_x^\beta\partial_t^j u]_{C^{0,\frac{1+\alpha}{2}}_t(\Omega)},
$$
and the norm
$$
\|u\|_{{C_p^{k,\alpha}(\Omega)}}=\sum_{|\beta|+2j\le k}\sup_{\Omega}|\partial_x^\beta\partial_t^j u|+[u]_{C_p^{k,\alpha}(\Omega)}.
$$
We set
\[
C^{k,\alpha}_p(\Omega) := \{u:\Omega \to \R: \|u\|_{C^{k,\alpha}_p(\Omega)} < +\infty \}.
\]
Finally, we recall some interpolation inequalities in parabolic H\"older spaces.
\begin{lem}[\cite{Lie96}*{Proposition 4.2}]
Let $N \geq 1$, and $0 < \beta < \alpha \le 1$. Then, for every $\e>0$ there exists $C > 0$ depending on $N$ and $\e$ such that
\begin{align}\label{lemma:interpolation}
\begin{aligned}
&\|u\|_{C^{0,\beta}_p(\Omega)}\le C \|u\|_{L^\infty(\Omega)}+ \e \|u\|_{C^{0,\alpha}_p(\Omega)}, \\
&\|\nabla u\|_{L^\infty(\Omega)}\le C \|u\|_{L^\infty(\Omega)}+\e [u]_{C^{1,\alpha}_p(\Omega)}, \\
&\| D^2 u\|_{L^\infty(\Omega)} + \|\partial_t u\|_{L^\infty(\Omega)}\le C \|u\|_{L^\infty(\Omega)} + \e [u]_{C^{2,\alpha}_p(\Omega)}. 
\end{aligned}
\end{align}
\end{lem}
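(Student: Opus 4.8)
The plan is to establish the three inequalities one at a time, each by the same elementary device: split the relevant difference quotient (or finite difference) of $u$ into a \emph{coarse} part, evaluated at a fixed length scale and controlled by $\|u\|_{L^\infty(\Omega)}$, plus a \emph{fine} remainder controlled by the top-order parabolic Hölder seminorm appearing on the right-hand side; then choose the length scale small enough that the coefficient of that seminorm falls below $\e$. Since the right-hand sides may be assumed finite (otherwise there is nothing to prove), $u$ is classically $C^0$, $C^1$, resp. $C^2$, so the Taylor expansions below are legitimate.

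For the first inequality I would fix $(z,t),(\zeta,\tau)\in\Omega$, set $d:=d_p((z,t),(\zeta,\tau))$, and distinguish $d\ge\rho$ — where $|u(z,t)-u(\zeta,\tau)|/d^\beta\le 2\rho^{-\beta}\|u\|_{L^\infty(\Omega)}$ — from $d<\rho$ — where, since $\alpha>\beta$, $|u(z,t)-u(\zeta,\tau)|/d^\beta=(|u(z,t)-u(\zeta,\tau)|/d^\alpha)\,d^{\alpha-\beta}\le\rho^{\alpha-\beta}[u]_{C^{0,\alpha}_p(\Omega)}$. Choosing $\rho$ with $\rho^{\alpha-\beta}\le\e$ and then $C\ge 1+2\rho^{-\beta}$ finishes this step, which uses nothing about the shape of $\Omega$.

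For the gradient bound I would fix $P_0=(z_0,t_0)\in\Omega$ and a unit spatial vector $e$, take $h>0$ small so that $\{(z_0+se,t_0):0\le s\le h\}\subset\Omega$, and write $u(z_0+he,t_0)-u(z_0,t_0)=h\,\partial_e u(P_0)+R_1$ with $|R_1|\le\int_0^h|\partial_e u(z_0+se,t_0)-\partial_e u(P_0)|\,ds\le\tfrac{h^{1+\alpha}}{1+\alpha}[u]_{C^{1,\alpha}_p(\Omega)}$; this gives $|\partial_e u(P_0)|\le\tfrac{2}{h}\|u\|_{L^\infty(\Omega)}+\tfrac{h^\alpha}{1+\alpha}[u]_{C^{1,\alpha}_p(\Omega)}$, and choosing $h^\alpha\le(1+\alpha)\e$ and supremizing over $P_0$ and the finitely many directions $e$ yields the claim. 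For the second-derivative and $\partial_t$ bounds I would argue the same way but with second-order information: for $\partial^2_e u$ use the symmetric second difference $u(z_0+he,t_0)+u(z_0-he,t_0)-2u(z_0,t_0)=h^2\partial^2_e u(P_0)+R_2$ with $|R_2|\le C h^{2+\alpha}[u]_{C^{2,\alpha}_p(\Omega)}$, the remainder being estimated through the $C^{0,\alpha}_p$-seminorm of $D^2u$, which is dominated by $[u]_{C^{2,\alpha}_p(\Omega)}$; recover the mixed derivatives by polarization; and for $\partial_t u$ integrate along the time direction, using the parabolic scaling built into $d_p$, namely $|\partial_t u(z_0,t_0+s)-\partial_t u(P_0)|\le[\partial_t u]_{C^{0,\alpha}_p(\Omega)}|s|^{\alpha/2}\le[u]_{C^{2,\alpha}_p(\Omega)}|s|^{\alpha/2}$, so that $u(z_0,t_0+h)-u(z_0,t_0)=h\,\partial_t u(P_0)+R$ with $|R|\le C h^{1+\alpha/2}[u]_{C^{2,\alpha}_p(\Omega)}$. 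In every case a sufficiently small choice of $h$ closes the estimate, and summing the two parts of the third inequality with $\e$ halved gives the stated form.

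The main obstacle here is not analytic but geometric: the finite-difference steps for the last two inequalities require the relevant spatial and temporal segments to remain inside $\Omega$, which is immediate when $\Omega$ is a parabolic cylinder — the only case actually used in this paper — or more generally convex, and for rougher domains must be reduced to that case by a covering argument; the first inequality, being a pure comparison of exponents, holds for any open set. Everything else is a one-line optimization in the scale parameter, with the dependence of $C$ on $N$ entering only through the number of coordinate directions and the polarization identity for the mixed second derivatives. This recovers precisely \cite{Lie96}*{Proposition 4.2}, which we simply quote.
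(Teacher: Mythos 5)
The paper does not supply a proof of this lemma; it is quoted verbatim from \cite{Lie96}*{Proposition 4.2}, so there is no internal argument to compare yours against. Your two-scale proof is correct and is in substance the argument of the cited reference: the pure H\"older interpolation in the first line is domain-free, while the gradient, second-derivative and $\partial_t$ bounds rest on Taylor expansions along short spatial and parabolic-time segments (with the remainder controlled by the appropriate piece of $[u]_{C^{1,\alpha}_p}$ or $[u]_{C^{2,\alpha}_p}$, followed by a small choice of the step length), and therefore require those segments to lie in $\Omega$ — the geometric point you flag explicitly, which Lieberman encodes in his hypotheses on the domain and which is automatic for the convex half-cylinders $Q_r^+$ on which this paper invokes the estimate.
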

\subsection{Weak solutions, energy estimates and $C_p^{1,\alpha}$ regularity}
Let $r>0$. In what follows, $B_r\subset\mathbb{R}^{N+1}$ denotes the ball of radius $r$ centered at the origin, $I_r:=(-r^2,r^2)\subset\R$, $Q_r:=B_r\times I_r \subset \mathbb{R}^{N+2}$ is the parabolic cylinder of radius $r$ centered at the origin and $Q_r^+ : =Q_r\cap\{y>0\}$, while $\partial^0Q_r^+:=Q_r\cap\{y=0\}$ is the flat boundary of the half cylinder.

We first recall the definition of weak solutions to problem \eqref{eq:1}, see \cite{AudFioVit24}*{Definition 2.15}. The weighted energy spaces $L^2(Q_r^+,y^a )$, $L^2(Q_r^+,y^a )^{N+1}$, $H^1(B_r^+,y^a)$, $L^2(I_r;H^1(B_r^+,y^a)) $, $ L^\infty(I_r;L^2(B_r^+,y^a))$ appearing below are defined in \cite[Section 2.1]{AudFioVit24}. 
\begin{defi}\label{def.solution}
Let $a>-1$, $N\ge1$, $r>0$, $f \in L^2(Q_r^+,y^a )$, $F \in L^2(Q_r^+,y^a )^{N+1}$. We say that $u$ is a weak solution to \eqref{eq:1} if $u \in L^2(I_r;H^1(B_r^+,y^a)) \cap L^\infty(I_r;L^2(B_r^+,y^a))$ and satisfies
\[
-\int_{Q_r^+}y^a u\partial_t\phi dzdt + \int_{Q_r^+}y^a A\nabla u\cdot\nabla\phi dzdt = \int_{Q_r^+}y^a( f\phi -F\cdot\nabla\phi )dzdt,
\]
for every $\phi\in C_c^\infty(Q_r)$.
We say that $u$ is an entire solution to 
\[
\begin{cases}
    y^a \partial_tu - {\div }(y^{a} A\nabla u) = y^a f + {\div}(y^{a} F) \quad &\text{in } \R_+^{N+1}\times\R \\
    \displaystyle{\lim_{y\to0^+}} y^a(A\nabla u+F)\cdot e_{N+1}=0              &\text{on }\partial \R_+^{N+1}\times\R,
\end{cases}
\]
if, for every $r > 0$, $u$ is a weak solution to \eqref{eq:1}.
\end{defi}
Weak solutions satisfy the following local energy inequality. We state the version we obtained in \cite{AudFioVit24} in the spirit of \cite{banerjee}.
\begin{lem}[\cite{AudFioVit24}*{Lemma 3.2}]\label{CACCIO}
Let $N\ge1$, $a\in\R$ and $A$ satisfying \eqref{eq:UnifEll}. Let $f\in L^2(Q_1^+,y^a)$, $F\in L^2(Q_1,y^a)^{N+1}$, and let $u$ be a weak solution to \eqref{eq:1}. Then, there exists $C > 0$ depending only on $N$, $a$, $\lambda$ and $\Lambda$ such that for every $\frac{1}{2}\le r' < r < 1$ there holds
\begin{align}\label{caccioppoli.inequality}
        \esssup_{t\in(-r'^2,r'^2)}\int_{B_{r'}^+}y^a u^2+\int_{Q_{r'}^+}y^a |\nabla u|^2 \le C\left[    \frac{1}{(r-r')^2}\int_{Q_r^+}y^a u^2+\|f\|_{L^2(Q_1^+,y^a)}^2+ \|F\|_{L^2(Q_1^+,y^a)}^2\right].
\end{align}
\end{lem}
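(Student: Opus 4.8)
The statement to prove is the Caccioppoli (local energy) inequality \eqref{caccioppoli.inequality}. The plan is the standard one: test the weak formulation with a test function of the form $\phi = \eta^2 u$, where $\eta = \eta(z,t)$ is a smooth cutoff with $\eta \equiv 1$ on $Q_{r'}$, $\supp \eta \subset Q_r$, $0 \le \eta \le 1$, $|\nabla \eta| \le C/(r-r')$ and $|\partial_t \eta| \le C/(r-r')^2$ (the time scaling is why $I_r = (-r^2,r^2)$). Since $u$ is only in the energy class, $\phi$ is not admissible directly; one regularizes in time by Steklov averaging (or mollification in $t$), derives the estimate for the averaged functions, and passes to the limit — this is routine and I would only remark on it. The boundary term at $\{y=0\}$ does not appear because the conormal condition $\lim_{y\to 0^+} y^a(A\nabla u + F)\cdot e_{N+1} = 0$ is exactly what is encoded in the weak formulation (no extra boundary integral is generated).

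First I would handle the parabolic term. Using $\partial_t(\eta^2 u) = \eta^2 \partial_t u + 2\eta\,\partial_t\eta\, u$ and the identity $\eta^2 u\,\partial_t u = \tfrac12 \partial_t(\eta^2 u^2) - \eta\,\partial_t\eta\, u^2$, integrating $-\int y^a u\,\partial_t(\eta^2 u)$ over $Q_1^+$ produces, after integrating the total time derivative, a term $\tfrac12 \int_{B_r^+} y^a (\eta^2 u^2)(\cdot,\tau)$ evaluated at a generic time slice $\tau$, plus a term controlled by $\int_{Q_r^+} y^a |\partial_t\eta|\, u^2 \le \tfrac{C}{(r-r')^2}\int_{Q_r^+} y^a u^2$. (Working with the Steklov average makes the time-slice evaluation legitimate; one fixes $\tau$ as the point realizing, up to $\varepsilon$, the essential supremum.) For the elliptic term, $\nabla(\eta^2 u) = \eta^2 \nabla u + 2\eta u\nabla\eta$, so
\[
\int_{Q_1^+} y^a A\nabla u\cdot\nabla(\eta^2 u) = \int_{Q_1^+} y^a \eta^2 A\nabla u\cdot\nabla u + 2\int_{Q_1^+} y^a \eta u\, A\nabla u\cdot\nabla\eta,
\]
and by \eqref{eq:UnifEll} the first term is $\ge \lambda \int_{Q_1^+} y^a \eta^2 |\nabla u|^2$, while the second is absorbed via Cauchy–Schwarz and Young's inequality: $2\Lambda \int y^a \eta|u||\nabla u||\nabla\eta| \le \tfrac{\lambda}{2}\int y^a \eta^2|\nabla u|^2 + C\int y^a u^2 |\nabla\eta|^2$. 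Finally the right-hand side $\int_{Q_1^+} y^a(f\eta^2 u - F\cdot\nabla(\eta^2 u))$ is estimated by Young's inequality: the $f$-term by $\int y^a f^2 + \int y^a \eta^2 u^2$, and the $F$-term, expanded as before, by $\tfrac{\lambda}{4}\int y^a \eta^2|\nabla u|^2 + C\int y^a|F|^2 + C\int y^a u^2|\nabla\eta|^2 + \int y^a \eta^2 u^2$; the bad gradient term is absorbed on the left.

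Combining these, the left-hand side gives $\tfrac12\int_{B_r^+} y^a(\eta^2 u^2)(\cdot,\tau) + \tfrac{\lambda}{4}\int_{Q_1^+} y^a\eta^2|\nabla u|^2$, and the right-hand side is bounded by $\tfrac{C}{(r-r')^2}\int_{Q_r^+} y^a u^2 + C\|f\|_{L^2(Q_1^+,y^a)}^2 + C\|F\|_{L^2(Q_1^+,y^a)}^2$ (here I use $r - r' \le 1$ to absorb lower-order terms like $\int \eta^2 u^2$ into the $(r-r')^{-2}$ term, and $Q_r \cap\{y>0\} \subset Q_1^+$). Taking the supremum over $\tau$ and using $\eta \equiv 1$ on $Q_{r'}$ on the left yields \eqref{caccioppoli.inequality} with a constant depending only on $N$, $a$, $\lambda$, $\Lambda$. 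The main (and only genuine) obstacle is the time-regularization step needed to legitimately test with $\eta^2 u$ and to evaluate $u$ on time slices; since this is entirely standard for parabolic equations in divergence form — and moreover the result is quoted from \cite{AudFioVit24}*{Lemma 3.2} — I would carry it out via Steklov averages but not dwell on the details.
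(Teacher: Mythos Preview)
Your proposal is correct and follows the standard Caccioppoli argument (test with $\eta^2 u$ after a Steklov regularization in time, use ellipticity, and absorb cross terms via Young's inequality). Note that the present paper does not actually prove this lemma: it is quoted verbatim from \cite{AudFioVit24}*{Lemma~3.2}, so there is no proof here to compare against; your sketch is precisely the argument one would expect to find there.
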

Finally, we state the main theorem in \cite{AudFioVit24}.
\begin{teo}[\cite{AudFioVit24}*{Theorem 1.1}]\label{teo-C^1,alpha}
    Let $N\ge1$, $a>-1$, $r\in(0,1)$, $p>{N+3+a^+}$ and $\alpha\in(0,1)\cap(0,1-\frac{N+3+a^+}{p}]$. Let $A\in C^{0,\alpha}_p(Q_1^+)$ satisfying \eqref{eq:UnifEll}, $f\in L^p(Q_1^+,y^a )$, $F\in C^{0,\alpha}_p(Q_1^+)$ and let $u$ be a weak solution to \eqref{eq:1}. Then, there exists $C>0$ depending only on $N$, $a$, $\lambda$, $\Lambda$, $r$, $p$, $\alpha$ and $\|A\|_{C^{0,\alpha}_p(Q_1^+)}$ such that
    $$
    \|u\|_{C^{1,\alpha}_p(Q_{r}^+)}\le C\Big(
    \|u\|_{L^2(Q_1^+,y^a)}+
    \|f\|_{L^p(Q_1^+,y^a)}+
    \|F\|_{C^{0,\alpha}_p(Q_1)}
    \Big).
    $$
    Moreover $u$ satisfies the conormal boundary condition
    $$(A\nabla u+F)\cdot e_{N+1}=0\qquad{\rm on  }\hspace{0.1cm}\partial^0Q_{r}^+.$$
\end{teo}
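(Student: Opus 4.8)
\emph{Overall strategy.} The plan is to obtain the stated estimate by a compactness (blow-up) scheme in the spirit of Simon, feeding on the Liouville classification of Theorem~\ref{teo:polynomial:liouville} and on a first, low-order De Giorgi--Nash--Moser regularity. By mollifying $A,f,F$ and using the stability of the class of weak solutions under $L^2(Q_1^+,y^a)$-convergence, it is enough to prove the estimate \emph{a priori}, i.e.\ for data smooth enough that $u$ can be freely differentiated in $Q_1^+$, with a constant independent of the mollification. A covering and parabolic-scaling argument then reduces matters to a pointwise decay statement: for every $P_0\in\overline{Q_{1/2}^+}$ there is an affine function $\ell_{P_0}$ --- of the form $c+b\cdot z$ when $P_0$ lies in the interior and of the \emph{conormal-adapted} form $c+b'\cdot x$ (no $y$-component, consistently with $\D u\cdot e_{N+1}=0$ on $\Si$) when $P_0\in\partial^0Q_{1/2}^+$ --- such that
\[
r^{-(1+\alpha)}\,\bigl\|u-\ell_{P_0}\bigr\|_{L^\infty\bigl(Q_r^+(P_0)\bigr)}\quad\text{stays bounded as }r\downarrow0,
\]
with bound controlled by $\|u\|_{L^2(Q_1^+,y^a)}+\|f\|_{L^p(Q_1^+,y^a)}+\|F\|_{C^{0,\alpha}_p(Q_1)}$. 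The characterization of $C^{1,\alpha}_p$ by such Campanato/Morrey-type decay yields the estimate, and the continuity of $\D u$ up to $\Si$ then upgrades the natural conormal condition in \eqref{eq:1} to the classical identity $(A\D u+F)\cdot e_{N+1}=0$ on $\partial^0Q_r^+$.

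\emph{First regularity layer.} The Caccioppoli inequality of Lemma~\ref{CACCIO}, combined with De Giorgi--Nash--Moser iteration, gives local boundedness of $u$ and its $C^{0,\alpha_0}_p$-continuity up to $\Si$ for some $\alpha_0\in(0,1)$, in the full range $a>-1$: for $a\in(-1,1)$ the weight $|y|^a$ is $A_2$-Muckenhoupt and this is Fabes--Kenig--Serapioni applied after even reflection across $\Si$ (allowed precisely by the conormal condition), while for $a\ge1$ one exploits the flatness of $\Si$ to reduce to a lower exponent. Together with Lemma~\ref{CACCIO} and a weighted parabolic Aubin--Lions compactness, this provides the compactness needed in the blow-up and lets one pass to the limit in $L^\infty$-based excess quantities.

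\emph{Blow-up and Liouville.} Assume the pointwise decay fails. Then there are coefficients/data $A_k,f_k,F_k$ with uniformly bounded norms, weak solutions $u_k$, points $P_k$ and radii $r_k\downarrow0$ along which the affine excess $\theta_k:=r_k^{-(1+\alpha)}\inf_\ell\|u_k-\ell\|_{L^\infty(Q_{r_k}^+(P_k))}\to\infty$, while a maximality selection keeps the excess at every dyadically larger scale comparable. Subtracting the optimal affine function at scale $r_k$, dividing by $\theta_k$ and rescaling parabolically to unit size, one produces $v_k$ with unit excess at scale $1$, excess at scale $\rho\ge1$ growing no faster than $\rho^{1+\alpha}$, and solving a weighted equation whose coefficients --- after the affine change of variables turning $A_k(P_k)$ into the identity --- converge to the constant-coefficient model $y^a\partial_t-\div(y^a\D)$, with forcing and flux terms that vanish in the limit since one divides by $\theta_k\to\infty$ while $\|f_k\|_{L^p(y^a)}$ and $\|F_k\|_{C^{0,\alpha}_p}$ scale strictly subcritically; this is exactly where $p>N+3+a^+$ and $\alpha\le 1-\frac{N+3+a^+}{p}$ enter. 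By Lemma~\ref{CACCIO} at all scales and the compactness above, a subsequence of $v_k$ converges in $L^2_{\loc}(y^a)$ to an entire solution $v_\infty$ of \eqref{eq-liouville-polinomiale} obeying $|v_\infty(z,t)|\le C(1+(|z|^2+|t|)^{(1+\alpha)/2})$ and carrying unit affine excess at scale $1$. Theorem~\ref{teo:polynomial:liouville} with $m=1$ (since $1+\alpha<2$) forces $v_\infty$ to be a polynomial of degree $\le1$ in $z$ and $\le0$ in $t$, i.e.\ $v_\infty=c+b\cdot x+ey$; imposing that $v_\infty$ solve the model equation ($ae=0$) and the conormal condition ($\lim_{y\to0^+}y^a e=0$) gives $e=0$, so $v_\infty=c+b\cdot x$ is itself affine. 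Hence its affine excess vanishes at every scale, contradicting the excess $1$ inherited from the $v_k$.

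\emph{Conclusion and main obstacle.} The contradiction proves the one-step decay; iterating it dyadically yields the pointwise decay for all $P_0\in\overline{Q_{1/2}^+}$, hence the stated $C^{1,\alpha}_p$ estimate, and --- as noted --- the conormal condition becomes classical. I expect two points to require the most care. The first is the blow-up bookkeeping: choosing the competitor class so that it is stable under the normalizing affine change of variables, matching the interior regime (scales below $\dist(P_k,\Si)$, where classical parabolic Schauder applies) with the boundary regime, and verifying quantitatively that the rescaled $f_k,F_k$ genuinely disappear under the sharp conditions on $p$ and $\alpha$. The second, and more serious, is establishing the De Giorgi--Nash--Moser ingredients --- and hence the compactness and the validity of Theorem~\ref{teo:polynomial:liouville} --- in the whole range $a>-1$ rather than only in the Muckenhoupt range $a\in(-1,1)$; this is precisely the step where the flat geometry of the characteristic hyperplane $\Si$ must be used in an essential way.
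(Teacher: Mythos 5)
The statement is Theorem~1.1 of the companion paper \cite{AudFioVit24}; the present paper imports it as a black box (most crucially in Lemma~\ref{lemma:approx:c2}) and does not re-prove it. As the introduction explains, the companion paper's proof works by regularizing the \emph{weight} --- replacing $|y|^a$ by $(\varepsilon^2+y^2)^{a/2}$ --- proving $\varepsilon$-uniform $C^{1,\alpha}_p$ estimates, and passing to the limit; for each fixed $\varepsilon$ the operator is uniformly parabolic, so the low-order De Giorgi--Nash--Moser theory and the compactness needed for a blow-up are available in the full range $a>-1$ without further ado. Your route is different: a direct excess-decay / blow-up argument in the spirit of Simon, using Theorem~\ref{teo:polynomial:liouville} as the Liouville input and mollifying only the \emph{data}. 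This is exactly the scheme the present paper employs from $C^{2,\alpha}_p$ onward (Proposition~\ref{teo-C^2,alpha}, Lemma~\ref{lemma:approx:c2}, Proposition~\ref{teo-C^k,alpha}), transplanted one order lower. There is no circularity in doing so: the proof of Theorem~\ref{teo:polynomial:liouville} rests on Lemmata~\ref{CACCIO}, \ref{caccioppoli.della.t}, \ref{lemma:derivative:i:solution} and on two technical facts from \cite{AudFioVit24} that are not Theorem~\ref{teo-C^1,alpha} itself, so invoking it here is legitimate. If it works, your route gives a more unified presentation (one mechanism for every order); the paper's route buys an easier handle on the supercritical range $a\ge1$.

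Two points keep your proposal at the level of a sketch. First, the overall plan mixes two incompatible frames: you open by reducing to an \emph{a priori} estimate after mollifying $A,f,F$, but then argue a Campanato-type decay of the $L^\infty$ affine excess, which is a direct scheme valid for weak solutions provided one has local boundedness and a $C^{0,\alpha_0}_p$ compactness input, without any a priori differentiability. You should commit to one: if you take the a priori route you must first establish that weak solutions with smooth data are already $C^{1,\alpha_0}_p$ for some $\alpha_0>0$ (the paper's Lemma~\ref{lemma:approx:c2} carries out the analogous step at the next order by invoking Theorem~\ref{teo-C^1,alpha}, which you cannot use); if you take the direct route the mollification is unnecessary. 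Second --- and you flag this yourself --- the $C^{0,\alpha_0}_p$ regularity and the associated compactness for $a\ge1$ is precisely the missing ingredient, not a routine add-on. The weight $|y|^a$ fails to be $A_2$-Muckenhoupt for $a\ge1$; the even reflection does not fix this; and ``one exploits the flatness of $\Si$ to reduce to a lower exponent'' is a placeholder, not an argument. The companion paper's $\varepsilon$-regularization is designed precisely to sidestep this obstruction at order one (it cannot be pushed to order two, which is why the present paper switches methods there), and your proposal offers no substitute for it. Until that step is supplied, the argument is an outline of a plausible alternative proof rather than a proof.
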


\subsection{Technical results}
In what follows we prove some auxiliary results that we will use throughout the paper.
We begin with a local $L^2$ bound for difference quotients of weak solutions w.r.t. the time variable.
\begin{lem}\label{caccioppoli.della.t}
Let $N\ge1$, $a>-1$ and let $A$ satisfying \eqref{eq:UnifEll} such that $\partial_t A\in {L^\infty(Q_1^+)}$. Let $f\in L^2(Q_1^+,y^a)$ and $F\in L^2(Q_1^+,y^a)^{N+1}$ such that $\partial_t f\in L^2(Q_1^+,y^a)$ and $\partial_t F\in L^2(Q_1^+,y^a)^{N+1}$, 
and let $u$ be a weak solution to \eqref{eq:1}. Consider the difference quotient of $u$ w.r.t. to $t$:
\begin{equation}\label{eq:difference:quotient}
u^h(z,t):=\frac{u(z,t+h)-u(z,t)}{h}, \qquad h > 0.
\end{equation}
Then, there exists $C > 0$ depending only on $N$, $a$, $ \lambda$ and $\Lambda$ such that, for every $r',r\in\mathbb{R}$ satisfying $\frac{1}{2}\le r'<r<1$ and $h > 0$, there holds
\begin{align}\label{caccioppoli.in.difference}
    \begin{aligned}
        \int_{Q_{r'}^+}y^a (u^h)^2&\le C\Big(\frac{1}{(r-r')^2}\int_{Q_r^+}y^a |\nabla u|^2 +
        \|f\|^2_{L^2(Q_1^+,y^a)}+\|F\|^2_{L^2(Q_1^+,y^a)}\\
        &\qquad+
        \|\partial_t A\|_{L^\infty(Q_1^+)}^2\int_{Q_r^+}y^a|\nabla u|^2
        +\|\partial_t f\|^2_{L^2(Q_1^+,y^a)}
        +\|\partial_t F\|^2_{L^2(Q_1^+,y^a)}
        \Big).
    \end{aligned}
\end{align}
\end{lem}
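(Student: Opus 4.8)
The plan is to mimic the classical Caccioppoli (energy) inequality of Lemma \ref{CACCIO}, but applied to the difference quotient $u^h$ rather than to $u$ itself. The key observation is that, since $u$ is a weak solution to \eqref{eq:1} with coefficients $A$ and data $f,F$, its time-translate $u(\cdot,\cdot+h)$ is a weak solution to the same type of equation with coefficients $A(\cdot,\cdot+h)$ and data $f(\cdot,\cdot+h)$, $F(\cdot,\cdot+h)$; subtracting the two weak formulations and dividing by $h$, one finds that $u^h$ is a weak solution (on a slightly smaller cylinder, to accommodate the shift) of
\[
y^a\partial_t u^h - \div(y^a A(\cdot,\cdot+h)\nabla u^h) = y^a f^h + \div(y^a F^h) + \div\big(y^a (\partial_t A)^h \nabla u\big),
\]
where $f^h, F^h, (\partial_t A)^h$ denote the corresponding difference quotients (here I have grouped the extra commutator term $\div(y^a A^h \nabla u)$, with $A^h$ the difference quotient of $A$, using $A^h \to \partial_t A$). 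The hypotheses $\partial_t A\in L^\infty$, $\partial_t f, \partial_t F\in L^2(Q_1^+,y^a)$ guarantee exactly that these new data are controlled: $\|f^h\|_{L^2(Q_1^+,y^a)}\le \|\partial_t f\|_{L^2(Q_1^+,y^a)}$ and similarly for $F^h$, while $\|A^h\|_{L^\infty}\le\|\partial_t A\|_{L^\infty}$, by the standard $L^2$ (resp. $L^\infty$) bound on difference quotients of functions with derivative in $L^2$ (resp. $L^\infty$).

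First I would make the above derivation precise, being careful about the domain: for $h>0$ small, $u^h$ is a weak solution on $Q_{1-c h}^+$ for a suitable constant, which is harmless since we only need the estimate on cylinders of radius $r<1$ with $r'\ge 1/2$, and $h$ can be taken small (the statement is uniform in $h$, but for large $h$ the bound is trivial from the energy inequality for $u$ itself, so one may assume $h$ small). Then I would apply the Caccioppoli inequality of Lemma \ref{CACCIO} — or rather, re-run its proof with the test function $\eta^2 u^h$ where $\eta$ is a cutoff between $Q_{r'}$ and $Q_r$ — to the equation solved by $u^h$. The new feature compared to Lemma \ref{CACCIO} is the presence of the extra source term $\div(y^a A^h\nabla u)$, which upon integration against $\eta^2 u^h$ produces, after Cauchy–Schwarz and absorption, a contribution bounded by $C\|A^h\|_{L^\infty}^2 \int_{Q_r^+} y^a|\nabla u|^2$ plus a small multiple of $\int_{Q_{r'}^+}y^a|\nabla u^h|^2$ that is reabsorbed on the left. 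This yields a bound on $\esssup_t \int y^a (u^h)^2 + \int_{Q_{r'}^+} y^a|\nabla u^h|^2$ by the right-hand side of \eqref{caccioppoli.in.difference}, which in particular controls $\int_{Q_{r'}^+} y^a (u^h)^2$ as claimed — in fact it gives the slightly stronger gradient bound too, but the statement only records the $L^2$ bound on $u^h$.

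The main obstacle I anticipate is not conceptual but technical: justifying that $u^h$ is an admissible test-competitor and that the manipulations (integration by parts in the weak formulation, the energy identity with cutoff, absorption of the gradient term) are legitimate in the weighted setting where $u$ only has the regularity $L^2(I_1;H^1(B_1^+,y^a))\cap L^\infty(I_1;L^2(B_1^+,y^a))$ from Definition \ref{def.solution}. One must check that $\partial_t u^h$ makes sense as a distribution and that the standard Steklov-averaging / mollification-in-time trick used to prove parabolic Caccioppoli inequalities goes through with the Muckenhoupt-type weight $y^a$ — but this is exactly the kind of argument already carried out in \cite{AudFioVit24} for Lemma \ref{CACCIO}, so I would invoke it verbatim. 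A secondary minor point is the bookkeeping of which norms of $f,F$ versus $\partial_t f,\partial_t F$ appear: the terms $\|f\|_{L^2}^2+\|F\|_{L^2}^2$ in \eqref{caccioppoli.in.difference} arise because the cutoff function $\eta$, when differentiated in $t$ inside the energy identity for $u^h$, reintroduces $u$ itself and hence, via Lemma \ref{CACCIO} applied to $u$, the original data; alternatively these terms can be absorbed by first bounding $\int_{Q_r^+} y^a |\nabla u|^2$ itself via Lemma \ref{CACCIO}, which is how I would present it to keep the algebra short.
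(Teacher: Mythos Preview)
There is a genuine gap in your argument. When you test the equation satisfied by $u^h$ with $\eta^2 u^h$ and run the Caccioppoli machinery, what you obtain is exactly the statement of Lemma \ref{CACCIO} applied to $u^h$, namely
\[
\esssup_{t}\int_{B_{r'}^+} y^a (u^h)^2 + \int_{Q_{r'}^+} y^a |\nabla u^h|^2 \le \frac{C}{(r-r')^2}\int_{Q_r^+} y^a (u^h)^2 + \text{(data)}.
\]
The right-hand side still contains $\int_{Q_r^+} y^a (u^h)^2$, which is precisely the quantity you are trying to bound; the estimate is circular. Nothing in the hypotheses gives you a priori control of $\|u^h\|_{L^2(y^a)}$, so this term cannot simply be declared ``data''.

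The paper closes the loop by a different choice of testing. One tests the (Steklov-averaged) equation for $u$ --- not for $u^h$ --- against $\eta^2 u^h$. Since $\partial_t u_h = u^h$, the parabolic term produces $\int y^a \eta^2 (u^h)^2$ directly on the left; the elliptic term produces $\int y^a (A\nabla u)_h\cdot\nabla(\eta^2 u^h)$, which after Cauchy--Schwarz is controlled by $\int y^a|\nabla u|^2$ plus a \emph{small} multiple $\delta\int y^a \eta^2|\nabla u^h|^2$. This last term is then bounded, via the Caccioppoli inequality you wrote for $u^h$, by $\delta C'(r_{n+1}-s_n)^{-2}\int_{Q_{r_{n+1}}^+} y^a (u^h)^2$. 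Choosing $\delta$ to make this coefficient small and iterating over a geometric sequence of radii $r_n\uparrow r$ (in the spirit of \cite{DongKim11}) absorbs the $(u^h)^2$ term and yields \eqref{caccioppoli.in.difference}. The point you are missing is this interplay: one inequality puts $\int (u^h)^2$ on the left at the cost of $\int|\nabla u^h|^2$ on the right, the other does the reverse, and only their combination (with a smallness parameter and iteration) closes.
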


\begin{proof} Fix $r,r'$ such that $\frac{1}{2}\le r'<r<1$. For $h>0$, such that $r<1-h$, let us consider the Steklov average of $u$
    $$u_h(z,t)=\frac{1}{h}\int_t^{t+h}u(z,s)dz,$$
which, by definition, satisfies $\partial_t u_h=u^h$ a.e. in $Q_1$ and the equation
\begin{equation}\label{solution.steklov}
    \int_{Q_r^+}y^a (\partial_t u_h \phi + (A\nabla u)_h\cdot\nabla\phi) = \int_{Q_r^+}y^a (f_h\phi-F_h\cdot\nabla\phi)dzdt, \qquad \forall \phi \in C_c^\infty(Q_1^+).
\end{equation}
Now, for simplicity of the exposition, we assume $f=0$, $F=0$, and we discuss how treat the general case in a second step.

Let us take $\phi=\eta^2 u^h$ as test function in \eqref{solution.steklov}, where $\eta$ is a smooth cut-off function which will define later. Using the Hölder and Young inequalities, the properties of Steklov averages and \eqref{eq:UnifEll}, we obtain
    \begin{align}\label{cacc.eq.t}
    \begin{aligned}
       & \int_{Q_1^+}y^a \eta^2(u^h)^2=\int_{Q_1^+}y^a\left(\eta^2(A\nabla u)_h\cdot\nabla u^h+2\eta u^h(A\nabla u)_h\cdot\nabla\eta  \right)\\
        &\le \left(\int_{Q_1^+} y^a\eta^2
    |(A\nabla u)_h|^2\right)^{1/2}\left(\int_{Q_1^+}y^a \eta^2|\nabla u^h|^2\right)^{1/2}
    +
        2\left(\int_{Q_1^+} y^a
        \eta^2(u^h)^2
        \right)^{1/2}\left(\int_{Q_1^+}y^a |(A\nabla u)_h|^2|\nabla \eta|^2
        \right)^{1/2}\\
        &	\le \frac{C}{\delta}\int_{Q_1^+}y^a |\nabla u|^2+{\delta}\int_{Q_1^+}y^a \eta^2|\nabla u^h|^2
        +\frac{1}{2}\int_{Q_1^+}y^a \eta^2(u^h)^2+C\int_{Q_1^+}y^a |\nabla \eta|^2|\nabla u|^2,
    \end{aligned}
    \end{align}
    for any fixed $\delta>0$ and $C > 0$ depending only on $N$, $a$, $ \lambda$ and $\Lambda$.

In the spirit of \cite[Lemma 3.3]{DongKim11}, we set
    $$
    r_0=r',\quad r_n=r'+\sum_{k=1}^n\frac{r-r'}{2^k},\quad s_n=\frac{r_n+r_{n+1}}{2},\qquad n\in\N,
    $$
    and notice that $r_n$ and $s_n$ are increasing sequences satisfying $r_n < s_n < r_{n+1}$, $r_n\to r$ and $s_n\to r$.

For a given $n\in\mathbb{N}$, taking a cut-off function $\eta_n\in C_c^\infty(Q_1^+)$ in \eqref{cacc.eq.t} such that
    $$
    \supp{\eta_n}\subset Q_{s_n}^+, \quad \eta_n\equiv1 \quad \text{in } Q_{r_n}^+, \quad 0\le\eta_n\le1, \quad |\nabla\eta_n|\le C\frac{2^n}{r-r'},
    $$
we deduce
    \begin{equation}\label{cacc.eq.t.2}
       \frac{1}{2} \int_{Q_{r_n}^+}y^a (u^h)^2
       \le \delta \int_{Q_{s_n}^+}y^a |\nabla u^h|^2
       +C\left(\frac{2^{2n}}{(r-r')^2}+\frac{1}{\delta}\right)\int_{Q_r^+}y^a|\nabla u|^2.
    \end{equation}
    Now, noticing that $u^h$ is a weak solution to
    \begin{equation*}
    y^a\partial_t u^h-\div(y^a A\nabla u^h)=\div(y^a A^h \nabla u)\quad \text{in } Q_r^+,
    \end{equation*}
we may apply the Caccioppoli inequality \eqref{caccioppoli.inequality} to $u^h$, to obtain 
\begin{equation}\label{eq:caccioppoli:t:steklov}
\int_{Q_{s_n}^+}y^a |\nabla u^h|^2\le \frac{C'2^{2n}}{(r-r')^2}
\int_{Q_{r_{n+1}}^+}y^a (u^h)^2+ C'\int_{Q_r^+}y^a |A^h\nabla u|^2,
\end{equation}
for some $C'>0$ independent of $h,r,r'$. Then, setting $\delta=\frac{1}{9}\frac{(r-r')^2}{C'2^{2n}}$ in \eqref{cacc.eq.t.2} and using \eqref{eq:caccioppoli:t:steklov}, it follows    
    $$
    \int_{Q_{r_n}^+}y^a (u^h)^2 
    \le\frac{1}{9}\int_{Q_{r_{n+1}}^+}y^a (u^h)^2
    +\frac{C2^{2n}}{(r-r')^2}\int_{Q_r^+}y^a|\nabla u|^2
    +\frac{C\|\partial_t A\|_{L^\infty(Q_1^+)}^2}{2^{2n}}\int_{Q_r^+}y^a|\nabla u|^2.
    $$
Now, multiplying both sides by $3^{-2n}$ and summing over $n$, we see that  
    \begin{align*}
        &\sum_{n=0}^\infty3^{-2n}\int_{Q_{r_n}^+}y^a (u^h)^2
        \le\sum_{n=0}^\infty 3^{-2n-2}\int_{Q_{r_{n+1}}^+}y^a (u^h)^2\\
        &+\frac{C}{(r-r')^2}\sum_{n=0}^\infty\left(\frac{2}{3}\right)^{2n}\int_{Q_r^+}y^a|\nabla u|^2+\sum_{n=0}^\infty\frac{C\|\partial_t A\|_{L^\infty(Q_1^+)}^2}{6^{2n}}\int_{Q_r^+}y^a|\nabla u|^2,
    \end{align*}
    which implies that
    $$
    \int_{Q_{r'}^+}y^a (u^h)^2\le \frac{C}{(r-r')^2} \int_{Q_r^+}y^a|\nabla u|^2+C\|\partial_t A\|_{L^\infty(Q_1^+)}^2\int_{Q_r^+}y^a|\nabla u|^2,
    $$
    for some new $C > 0$, which is exactly \eqref{caccioppoli.in.difference} in the case $f=0$ and $F=0$.

For non-trivial $f$ and $F$ in the r.h.s., we have two additional terms: one in \eqref{cacc.eq.t} and one in \eqref{eq:caccioppoli:t:steklov}. Both of them can be estimated using the arguments above, namely
\begin{align*}
    &\int_{Q_1^+}y^a(f_h\eta^2 u^h+F_h\cdot\nabla(\eta^2 u^h))\\
    &\le C\|f\|^2_{L^2(Q_1^+,y^a)}+C_\delta\|F\|^2_{L^2(Q_1^+,y^a)}+\frac{1}{4}\int_{Q_1^+}y^a\eta^2 (u^h)^2+\delta\int_{Q_1^+}y^a\eta^2|\nabla u^h|^2,
\end{align*}
for every $\delta>0$, where we have implicitly used that 
\begin{align*}
\int_{Q_1^+} {y^a} \big((f^h)^2+|F^h|^2\big)  
\le \int_{Q_1^+} {y^a}\big((\partial_t f)^2+|\partial_t F|^2\big),
\end{align*}
for every $h \in (0,1)$. With such estimate at hand, the argument above can be slightly adapted to obtain \eqref{caccioppoli.in.difference} in the general case.
\end{proof}
An immediate consequence of the above estimates is that, under suitable regularity assumptions on the data, derivatives (w.r.t. $t$ and $x$) of weak solutions to \eqref{eq:1} are still weak solutions (of a suitable problem of the class \eqref{eq:1}). 
\begin{lem}\label{lemma:derivative:t:solution}
  Let $a>-1$, $N\ge1$, $r\in(0,1)$ and let $A$ satisfying \eqref{eq:UnifEll} such that $\partial_t A \in {L^\infty(Q_1^+)}$. Let $f\in L^2(Q_1^+,y^a)$ and $F\in L^2(Q_1^+,y^a)$ such that $\partial_t f, \partial_t F\in L^2(Q_1^+,y^a)$, and let $u$ be a weak solution to \eqref{eq:1}. Then $v:=\partial_t u$ is a weak solution to
    \begin{equation}\label{eq:solution.t.derivative}
    \begin{cases}
    y^a \partial_t v - {\div }(y^{a} A\nabla v) = y^a \partial_t f + {\div}(y^{a}(\partial_t A\nabla u+ \partial_t F)) \quad &\text{in } Q_r^+,\\
    \displaystyle{\lim_{y\to0^+}} y^a(A\nabla v+\partial_t A\nabla u+ \partial_t F)\cdot e_{N+1}=0              &\text{on }\partial^0Q_r^+.
\end{cases}
\end{equation}
\end{lem}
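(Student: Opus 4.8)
The plan is to prove that $v := \partial_t u$ is a weak solution to \eqref{eq:solution.t.derivative} by differentiating the weak formulation of \eqref{eq:1} in the time variable, justifying the manipulation through the difference-quotient estimate of Lemma \ref{caccioppoli.della.t}. First I would fix $r \in (0,1)$ and a test function $\phi \in C_c^\infty(Q_r^+)$, and for small $h>0$ insert the backward-shifted test function $\phi(z,t-h)$ (equivalently, work with the shifted function $\phi^{-h}$) into the weak formulation of \eqref{eq:1} for $u$. Subtracting the unshifted identity and dividing by $h$ produces, after a discrete integration by parts in $t$ (the ``summation by parts'' for difference quotients, using that $\phi$ has compact support so no boundary terms in $t$ appear), the identity
\[
-\int_{Q_r^+} y^a u^h \partial_t \phi \, dz\, dt + \int_{Q_r^+} y^a (A\nabla u)^h \cdot \nabla \phi \, dz\, dt = \int_{Q_r^+} y^a\big( f^h \phi - F^h \cdot \nabla \phi\big) \, dz\, dt,
\]
where $u^h$ is the difference quotient \eqref{eq:difference:quotient} and $(A\nabla u)^h$, $f^h$, $F^h$ are the corresponding difference quotients of $A\nabla u$, $f$, $F$. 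Here one writes $(A\nabla u)^h = A(\cdot + he_t)\nabla u^h + (A^h)\nabla u$, where $A^h$ is the difference quotient of $A$ in $t$; since $\partial_t A \in L^\infty$, we have $A^h \to \partial_t A$ in every $L^q_{\loc}$, and $A(\cdot+he_t) \to A$.

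Next I would pass to the limit $h \to 0^+$. The key compactness input is Lemma \ref{caccioppoli.della.t}: combined with the Caccioppoli inequality \eqref{caccioppoli.inequality} applied to $u^h$ (which solves the equation displayed in the proof of that lemma), it gives a uniform-in-$h$ bound on $u^h$ in $L^2(Q_{r'}^+,y^a)$ and on $\nabla u^h$ in $L^2(Q_{r''}^+, y^a)$ for $r'' < r' < r$. Hence, along a subsequence, $u^h \rightharpoonup v = \partial_t u$ weakly in $L^2_{\loc}(y^a)$ and $\nabla u^h \rightharpoonup \nabla v$ weakly in $L^2_{\loc}(y^a)$ (the limit is identified as $\partial_t u$ and $\nabla \partial_t u$ by testing against smooth functions, using that difference quotients of an $L^1_{\loc}$ function whose difference quotients are $L^2$-bounded converge to the distributional derivative). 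One also needs $v \in L^\infty(I_{r'}; L^2(B_{r'}^+,y^a))$, which follows from the same a priori bounds together with the energy estimate structure. Passing to the limit in each term of the displayed identity is then routine: the products of a weakly convergent factor with a strongly convergent one (e.g. $(A\nabla u)^h \to A\nabla v + \partial_t A \nabla u$ weakly, since $A(\cdot+he_t)\nabla u^h \rightharpoonup A\nabla v$ and $A^h \nabla u \to \partial_t A \nabla u$ strongly in $L^1_{\loc}$) pair against the fixed smooth test function, and likewise $f^h \to \partial_t f$, $F^h \to \partial_t F$ in $L^2_{\loc}(y^a)$ by hypothesis. This yields exactly the weak formulation
\[
-\int_{Q_r^+} y^a v \partial_t \phi + \int_{Q_r^+} y^a A\nabla v \cdot \nabla \phi = \int_{Q_r^+} y^a\big( \partial_t f\, \phi - (\partial_t A \nabla u + \partial_t F)\cdot \nabla \phi\big),
\]
for all $\phi \in C_c^\infty(Q_r)$, which is precisely the statement that $v$ is a weak solution of \eqref{eq:solution.t.derivative}, the conormal condition on $\partial^0 Q_r^+$ being encoded (as in Definition \ref{def.solution}) in the use of test functions not vanishing on $\{y=0\}$.

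The main obstacle I anticipate is the rigorous justification of the convergence $u^h \rightharpoonup \partial_t u$ and $\nabla u^h \rightharpoonup \nabla \partial_t u$ in the weighted spaces, i.e. verifying that the weak-$L^2(y^a)$ limits of the difference quotients genuinely are the distributional $t$-derivatives and that $v$ has the claimed regularity $v \in L^2(I_{r'};H^1(B_{r'}^+,y^a)) \cap L^\infty(I_{r'};L^2(B_{r'}^+,y^a))$; this requires care because $y^a$ is only a Muckenhoupt weight in a limited range and one must argue within the $H^1(y^a)$ framework of \cite{AudFioVit24} rather than the classical one. A secondary technical point is handling $(A\nabla u)^h$: one should not differentiate $A$ and $\nabla u$ separately at the level of the difference quotient but rather use the exact product rule for difference quotients, $g^h(t) = g_1(t+h)\,g_2^h(t) + g_1^h(t)\,g_2(t)$, and then pass to the limit. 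Once these convergences are secured, everything else is a direct limit in a linear identity against a fixed test function.
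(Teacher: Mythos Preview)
Your proposal is correct and follows the same overall strategy as the paper: derive the weak formulation for the difference quotient $u^h$, invoke Lemma~\ref{caccioppoli.della.t} together with \eqref{caccioppoli.inequality} for uniform-in-$h$ energy bounds, and pass to the limit $h\to0^+$. The only notable difference lies in the compactness step. You rely on weak-$L^2(y^a)$ convergence of $u^h$ and $\nabla u^h$, identifying the limits via the shift-of-test-function argument; this suffices because every term in the weak formulation is linear in $(u^h,\nabla u^h)$ paired against a fixed or strongly convergent factor. The paper instead introduces a spatial cutoff $\xi\in C_c^\infty(B_{r'})$, bounds $\partial_t(\xi u^h)$ in $L^2(I_{r'};H^{-1}(B_{r'}^+,y^a))$, and applies the Aubin--Lions lemma to obtain \emph{strong} $L^2(y^a)$ convergence $u^h\to\partial_t u$; it then appeals to the $(H=W)$ property of the weighted Sobolev space \cite{Zhikov,TerTorVit24} to conclude $\partial_t u\in L^2(I_r;H^1(B_r^+,y^a))$, and to Fatou for the $L^\infty(I_r;L^2)$ membership. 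Your route is slightly more elementary; the only delicate point is precisely the function-space membership you already flag as the main obstacle, and the paper's explicit invocation of $(H=W)$ is the clean way to resolve it. One minor slip: your opening line should take $\phi\in C_c^\infty(Q_r)$ rather than $C_c^\infty(Q_r^+)$, consistently with Definition~\ref{def.solution} and your own closing remark about encoding the conormal condition.
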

\begin{proof} Let us fix $0<r<r'<r''<1$ and $h>0$ such that $r''<1-h$. Let $u^h$ be the difference quotient of $u$ w.r.t. to $t$ defined in \eqref{eq:difference:quotient}. By Lemma \ref{caccioppoli.della.t}, $\|u^h\|_{L^2(Q_{r''}^+,y^a)}$ is bounded independently of $h > 0$. Further, since $u^h$ is a weak solution to
 \begin{equation}\label{eq:weak:difference}
    y^a\partial_t u^h-\div(y^a A\nabla u^h)=y^af^h+\div(y^a(F^h +A^h \nabla u))\quad \text{in } Q_{r''}^+,
    \end{equation}
we may use Lemma \ref{CACCIO} to deduce that 
$\|u^h\|_{L^\infty(I_{r'},L^2(B_{r'}^+,y^a))}$ and $\|u^h\|_{L^2(I_{r'},H^1(B_{r'}^+,y^a))}$ are bounded independently of $h > 0$ as well.

Now, let $\xi\in C_c^\infty(B_{r'})$ be a cut-off function such that $0\le\xi\le 1$ and $\xi\equiv1$ in $B_{r}$ and set $v^h:=\xi u^h \in L^2(I_{r'},H^1_0(B_{r'}^+,y^a))$. Arguing as in \cite{AudFioVit24}*{Lemma 4.2, Remark 2.16}, we obtain that $v^h$ is a weak solution to
\begin{equation}\label{eq:xi:u^h}
y^a\partial_t v^h-\div (y^a A\nabla v^h)=y^a\tilde{f}+\div (y^a\tilde{F}),\quad\text{ in }Q_{r'}^+,
\end{equation}
where 
$$
\tilde{f}:=f^h\xi-(F^h+A^h\nabla u)\cdot\nabla\xi-A\nabla u^h\cdot\nabla\xi,
\quad\tilde{F}:=(F^h+A^h\nabla u)\xi-u^hA\nabla\xi,
$$
satisfying also that $\|\partial_t v^h\|_{L^2(I_{r'},H^{-1}(B_{r'}^+,y^a))}\le C$, for some $C>0$ independent of $h > 0$. Consequently,
$$
\|v^h\|_{L^2(I_{r'},H^1_0(B_{r'}^+,y^a))}+\|\partial_t v^h\|_{L^2(I_{r'},H^{-1}(B_{r'}^+,y^a))}\le C,
$$
for some $C > 0$ independent of $h > 0$. Consequently, the Aubin-Lion lemma (see for instance \cite{Simon87}*{Corollary 8}) yields the existence of $v\in {L^2(I_{r'},H^1_0(B_{r'}^+,y^a))}$ such that $v^h\to v$ in $L^2(Q_{r'}^+,y^a)$ and $\nabla v^h\rightharpoonup
\nabla v$ in $L^2(Q_{r'}^+,y^a)$. Since $\xi\equiv 1$ in $Q_r^+$, one has that $u^h\to \partial_t u$ in $L^2(Q_{r}^+,y^a)$ and $\nabla u^h\rightharpoonup
\nabla (\partial_t u)$ in $ {L^2(Q_{r}^+,y^a)}$. Furthermore, by the (H=W) property (see \cite{Zhikov,TerTorVit24}), one has $\partial_t u\in L^2(I_r,H^1(B_r^+,y^a))$ and $\partial_t u\in L^\infty(I_r,L^2(B_r^+,y^a))$ by Fatou's lemma.

Finally, let us fix a test function $\phi\in C_c^\infty(Q_r)$ if $a\in(-1,1)$ or $\phi\in C_c^\infty(Q_r^+)$ if $a\ge1$. By the same argument of \cite{AudFioVit24}*{Lemma 4.2}, we can take the limit as $h\to0^+$ in the weak formulation of \eqref{eq:weak:difference}, to deduce
\begin{align*}
0=&\int_{Q_r^+}y^a\big( -u^h\phi_t +A\nabla u^h\cdot\nabla\phi-f^h\phi+(F^h+A^h\nabla u)\cdot\nabla \phi \big)\\
&\to \int_{Q_r^+}y^a\big( -\partial_t u\phi_t +A\nabla \partial_t u\cdot\nabla\phi-\partial_t f\phi+(\partial_t F+\partial_t A\nabla u)\cdot\nabla \phi \big),
\end{align*}
as $h\to0^+$, that is $\partial_t u$ is a weak solution to \eqref{eq:solution.t.derivative}.
\end{proof}
Analogously, we obtain the equations of the partial derivatives w.r.t. $x$.
\begin{lem}\label{lemma:derivative:i:solution}
  Let $a>-1$, $N\ge1$, $r\in(0,1)$, $i\in\{1,\dots,N\}$ and let $A$ satisfying \eqref{eq:UnifEll} such that $\partial_{x_i} A \in {L^\infty(Q_1^+)}$. Let $f\in L^2(Q_1^+,y^a)$ and $F\in L^2(Q_1^+,y^a)$ such that $\partial_{x_i} f, \partial_{x_i} F\in L^2(Q_1^+,y^a)$, and let $u$ be a weak solution to \eqref{eq:1}. Then $v_i:=\partial_{x_i} u$ is a weak solution to
    \begin{equation}\label{eq:solution.i.derivative}
    \begin{cases}
    y^a \partial_t v_i - {\div }(y^{a} A\nabla v_i) = y^a \partial_{x_i} f + {\div}(y^{a}(\partial_{x_i} A\nabla u+ \partial_{x_i} F)) \quad &\text{in } Q_r^+,\\
    \displaystyle{\lim_{y\to0^+}} y^a(A\nabla v_i+\partial_{x_i} A\nabla u+ \partial_{x_i} F)\cdot e_{N+1}=0              &\text{on }\partial^0Q_r^+.
\end{cases}
\end{equation}
    \end{lem}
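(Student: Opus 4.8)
The plan is to mirror the proof of Lemma~\ref{lemma:derivative:t:solution} almost verbatim, replacing the time difference quotient by the spatial one in the direction $e_i$. First I would fix radii $0<r<r'<r''<1$ and introduce the spatial difference quotient
\[
u^h_i(z,t) := \frac{u(z+he_i,t)-u(z,t)}{h}, \qquad h>0,
\]
which is admissible since translating in the $x_i$-direction keeps us inside $Q_1^+$ (the degeneracy set $\{y=0\}$ and the cylinder's lateral geometry in the $x$-variables are translation invariant, so no boundary terms are created). Testing the weak formulation of \eqref{eq:1} with $\phi(z-he_i,t)$ and subtracting shows that $u_i^h$ is a weak solution to
\[
y^a\partial_t u_i^h - \div(y^a A\nabla u_i^h) = y^a f_i^h + \div\big(y^a(F_i^h + A_i^h\nabla u)\big) \quad\text{in } Q_{r''}^+,
\]
where $f_i^h$, $F_i^h$, $A_i^h$ denote the corresponding difference quotients. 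The natural conormal condition on $\partial^0Q_{r''}^+$ is preserved for the same reason: differencing in a direction tangent to $\Sigma$ does not interact with the limit $y\to0^+$.

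Next I would produce the $h$-uniform bounds. The role played by Lemma~\ref{caccioppoli.della.t} in the time case is here played by the standard interior Caccioppoli estimate for difference quotients in a non-degenerate direction: since $\partial_{x_i}A\in L^\infty(Q_1^+)$ and $\partial_{x_i}f,\partial_{x_i}F\in L^2(Q_1^+,y^a)$, one gets $\|u_i^h\|_{L^2(Q_{r''}^+,y^a)}\le C$ uniformly in $h$ (this is simpler than the $t$-case because no Steklov averaging is needed — difference quotients in $x_i$ commute with $\partial_t$ directly, and the weight $y^a$ is untouched by the translation, so the argument of Lemma~\ref{caccioppoli.della.t} goes through with the iteration on radii replaced by a single application of \eqref{caccioppoli.inequality}). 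Feeding this back into Lemma~\ref{CACCIO} applied to the equation displayed above gives uniform bounds on $\|u_i^h\|_{L^\infty(I_{r'};L^2(B_{r'}^+,y^a))}$ and $\|u_i^h\|_{L^2(I_{r'};H^1(B_{r'}^+,y^a))}$.

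Then I would localize: pick $\xi\in C_c^\infty(B_{r'})$ with $\xi\equiv1$ on $B_r$, set $v_i^h:=\xi u_i^h$, and — exactly as in \cite{AudFioVit24}*{Lemma 4.2, Remark 2.16} — verify that $v_i^h$ solves a problem of the form \eqref{eq:1} on $Q_{r'}^+$ with data
\[
\tilde f := f_i^h\xi - (F_i^h+A_i^h\nabla u)\cdot\nabla\xi - A\nabla u_i^h\cdot\nabla\xi,\qquad
\tilde F := (F_i^h+A_i^h\nabla u)\xi - u_i^h A\nabla\xi,
\]
together with the uniform bound $\|\partial_t v_i^h\|_{L^2(I_{r'};H^{-1}(B_{r'}^+,y^a))}\le C$. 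The Aubin--Lions lemma then yields $v_i^h\to v$ strongly in $L^2(Q_{r'}^+,y^a)$ with $\nabla v_i^h\rightharpoonup\nabla v$, hence $u_i^h\to\partial_{x_i}u$ in $L^2(Q_r^+,y^a)$ and $\nabla u_i^h\rightharpoonup\nabla(\partial_{x_i}u)$; by the $(H=W)$ property and Fatou, $\partial_{x_i}u\in L^2(I_r;H^1(B_r^+,y^a))\cap L^\infty(I_r;L^2(B_r^+,y^a))$. Finally, passing to the limit $h\to0^+$ in the weak formulation of the equation for $u_i^h$ — with test functions $\phi\in C_c^\infty(Q_r)$ if $a\in(-1,1)$ and $\phi\in C_c^\infty(Q_r^+)$ if $a\ge1$ — gives that $\partial_{x_i}u$ is a weak solution of \eqref{eq:solution.i.derivative}. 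The only point requiring a little care (the ``main obstacle'', though a mild one) is checking that no spurious boundary contribution on $\partial^0 Q_r^+$ appears when $a\ge 1$; this is handled exactly as in \cite{AudFioVit24}*{Lemma 4.2} by restricting to test functions supported away from $\Sigma$ in that range, the difference-quotient direction $e_i$ being tangential to $\Sigma$ throughout.
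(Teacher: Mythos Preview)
Your proposal is correct and is exactly what the paper intends: its own proof reads ``The proof closely follows the above one and we skip it,'' i.e.\ repeat Lemma~\ref{lemma:derivative:t:solution} with spatial difference quotients in the tangential direction $e_i$. One small simplification: the uniform bound $\|u_i^h\|_{L^2(Q_{r''}^+,y^a)}\le C$ follows immediately from the Caccioppoli inequality \eqref{caccioppoli.inequality} (which gives $\nabla u\in L^2(Q_{r''}^+,y^a)$, hence $\partial_{x_i}u\in L^2$) together with the standard difference-quotient estimate, so you do not need to rerun the iteration of Lemma~\ref{caccioppoli.della.t} at all---this is precisely why the spatial case is easier than the time case.
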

    \begin{proof} The proof closely follows the above one and we skip it.
    \end{proof}
The following two auxiliary results are in the spirit of \cite{TerTorVit22}*{Lemma 2.3} and \cite{SirTerVit21a}*{Theorem 7.5} (see also \cite{TerTorVit22}*{Lemma 2.4, Remark 2.5}) in the elliptic setting and turn out to be crucial in rest of the paper.
\begin{lem}\label{lemma-2.3-TTV}
    Let $k\in\mathbb{N}$ and let $v\in C^{k+1,\alpha}_p(Q_1^+)$ such that $v(x,0,t)\equiv0$. Then ${v}/{y}\in C^{k,\alpha}_p(Q_1^+)$ and $[v/y]_{C^{k,\alpha}_p(Q_1^+)} \le [v]_{C^{k+1,\alpha}_p(Q_1^+)}$.
\end{lem}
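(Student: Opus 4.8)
The plan is to write $v(z,t)/y$ as an average of $\partial_y v$ along the segment from $(x,0,t)$ to $(x,y,t)$ and then estimate the relevant parabolic H\"older seminorms of this average. Concretely, since $v(x,0,t)\equiv 0$, by the fundamental theorem of calculus one has
\[
\frac{v(x,y,t)}{y} = \int_0^1 \partial_y v(x,sy,t)\,ds =: w(x,y,t),
\]
and the first task is to check that $w$ extends to a function on $Q_1^+$ (including the portion $\{y=0\}$, where it equals $\partial_y v(x,0,t)$) lying in $C^{k,\alpha}_p$. For the $L^\infty$ bound this is immediate: $|w|\le \|\partial_y v\|_{L^\infty}\le \|v\|_{C^{k+1,\alpha}_p}$.

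Next I would treat the derivatives. For a spatial multi-index $\beta$ and a time order $j$ with $|\beta|+2j\le k$, differentiating under the integral sign (justified since $v\in C^{k+1,\alpha}_p$) gives, when $\beta$ has no $y$-component,
\[
\partial_x^\beta\partial_t^j w(x,y,t) = \int_0^1 (\partial_x^\beta\partial_t^j\partial_y v)(x,sy,t)\,ds,
\]
and when $\beta$ contains $\ell\ge 1$ factors of $\partial_y$, each such factor brings down a factor $s$, so one obtains $\int_0^1 s^\ell (\partial_x^{\beta}\partial_t^j\partial_y v)(x,sy,t)\,ds$ with the same total number of derivatives $|\beta|+2j+1\le k+1$ falling on $v$. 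In every case $\partial_x^\beta\partial_t^j w$ is an average (with a bounded weight $s^\ell$, $0\le s^\ell\le 1$) of the corresponding order-$(|\beta|+2j+1)$ derivative of $v$ evaluated at the rescaled point. The $C^{0,\alpha}_p$-seminorm of such an average is then controlled by the $C^{0,\alpha}_p$-seminorm of the integrand: for two points $(z,t),(\zeta,\tau)\in Q_1^+$ the difference of the integrands at parameter $s$ is the difference of a fixed function of regularity $C^{0,\alpha}_p$ evaluated at $(x,sy,t)$ and $(\xi,s\eta,\tau)$, and the parabolic distance between these rescaled points is at most the parabolic distance between $(z,t)$ and $(\zeta,\tau)$ (since $|sy-s\eta|\le|y-\eta|$ and the remaining coordinates are unchanged). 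Integrating the resulting bound in $s\in(0,1)$ and using $s^\ell\le 1$ yields exactly $[\partial_x^\beta\partial_t^j w]_{C^{0,\alpha}_p(Q_1^+)}\le [\partial_x^{\beta'}\partial_t^j v]_{C^{0,\alpha}_p(Q_1^+)}$ for the appropriate order-$(k+1)$ derivative $\partial_x^{\beta'}\partial_t^j v$. Summing over all $|\beta|+2j=k$ — and handling the purely-time seminorm $[w]_{C^{k-1,(1+\alpha)/2}_t}$ in the same manner, noting the rescaling does not touch the $t$-variable — gives $[v/y]_{C^{k,\alpha}_p(Q_1^+)}\le [v]_{C^{k+1,\alpha}_p(Q_1^+)}$.

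The only genuinely delicate point is the bookkeeping when $\partial_y$ appears among the derivatives being estimated: one must verify that differentiating the integral representation $w(x,y,t)=\int_0^1\partial_y v(x,sy,t)\,ds$ in $y$ repeatedly keeps producing an average of an order-$(k+1)$ derivative of $v$ against a polynomial-in-$s$ weight bounded by $1$ on $(0,1)$, rather than something worse; this is a routine induction on the number of $y$-derivatives, using that each $\partial_y$ hitting the integrand contributes one factor of $s$ and one extra $\partial_y$ on $v$, so the total derivative count on $v$ never exceeds $k+1$ and the $s$-weight stays in $[0,1]$. Once this is in place, the H\"older estimate reduces, uniformly in $s$, to the elementary observation that precomposition with the $1$-Lipschitz (indeed contraction) map $(x,y,t)\mapsto(x,sy,t)$ does not increase parabolic H\"older seminorms, and the conclusion follows by integrating in $s$.
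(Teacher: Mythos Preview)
Your proof is correct and is precisely the approach the paper has in mind: the paper omits the argument, noting only that ``the proof follows its elliptic counterpart'' (namely \cite{TerTorVit22}*{Lemma 2.3}), and that elliptic proof is exactly the integral representation $v/y=\int_0^1\partial_y v(x,sy,t)\,ds$ together with the observation that the rescaling $(x,y,t)\mapsto(x,sy,t)$ is $1$-Lipschitz for the parabolic distance. Your bookkeeping of the $y$-derivatives and of the time-H\"older part is the only additional ingredient needed in the parabolic setting, and it is handled correctly.
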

\begin{proof} The proof follows its elliptic counterpart and we skip it.
\end{proof}
\begin{lem}\label{lemma-2.4-TTV}
    Let $a>-1$, $k\in\mathbb{N}$, $\alpha\in(0,1)$ and let $g\in C^{k,\alpha}_p(Q_1^+)$. Then  the function
    $$\varphi(x,y,t)=\frac{1}{y^{1+a}}\int_0^y s^ag(x,s,t)ds$$
    belongs to $ C^{k,\alpha}_p(Q_1^+)$ and  $[\varphi]_{C^{k,\alpha}_p(Q_1^+)} \le C [g]_{C^{k+1,\alpha}_p(Q_1^+)}$, for some $C>0$ depending only on $a$.
    Moreover, the function
    $$\psi(x,y,t)=\frac{1}{y^{a}}\int_0^y s^ag(x,s,t)ds$$
    satisfies $\partial_y \psi \in C^{k,\alpha}_p(Q_1^+)$ and $[\partial_y \psi]_{C^{k,\alpha}_p(Q_1^+)} \le C [g]_{C^{k+1,\alpha}_p(Q_1^+)}$, for some $C>0$ depending only on $a$.
\end{lem}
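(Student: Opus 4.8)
The plan is to reduce the whole statement to an elementary averaging identity obtained by rescaling the variable of integration. Setting $s=\lambda y$, one gets for every $(x,y,t)\in Q_1^+$
\[
\varphi(x,y,t)=\frac{1}{y^{1+a}}\int_0^y s^a g(x,s,t)\,ds=\int_0^1\lambda^a\,g(x,\lambda y,t)\,d\lambda,
\qquad
\psi(x,y,t)=y\,\varphi(x,y,t).
\]
The integral on the right converges because $\lambda\mapsto\lambda^a\in L^1(0,1)$ precisely when $a>-1$ — this is the only point where the assumption on $a$ is used — and it already gives $\|\varphi\|_{L^\infty(Q_1^+)}\le\frac{1}{1+a}\|g\|_{L^\infty(Q_1^+)}$, while showing that $\varphi$ extends continuously up to $\{y=0\}$.

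First I would record how the anisotropic dilation $D_\lambda(x,y,t):=(x,\lambda y,t)$, $\lambda\in[0,1]$, acts on parabolic H\"older norms. Since $\lambda\le1$, $D_\lambda$ maps $Q_1^+$ into itself and is $1$-Lipschitz for the parabolic distance, $d_p(D_\lambda(z,t),D_\lambda(\zeta,\tau))\le d_p((z,t),(\zeta,\tau))$; moreover $\partial_x^\beta\partial_t^j(g\circ D_\lambda)=\lambda^{\beta_{N+1}}\,(\partial_x^\beta\partial_t^j g)\circ D_\lambda$, where $\beta_{N+1}\ge0$ is the order of differentiation in $y$. Combining these two facts, for every multi-index and every $\lambda\in[0,1]$ one gets $[g\circ D_\lambda]_{C^{0,\alpha}_p(Q_1^+)}\le[g]_{C^{0,\alpha}_p(Q_1^+)}$, and the same for the time seminorms $[\,\cdot\,]_{C^{0,\frac{1+\alpha}{2}}_t}$ of all derivatives up to order $k$. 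Then I would differentiate under the integral sign in $\varphi=\int_0^1\lambda^a(g\circ D_\lambda)\,d\lambda$, which is legitimate since every $\partial_x^\beta\partial_t^j g$ is bounded on $Q_1^+$ and $\lambda^{a+\beta_{N+1}}\le\lambda^a\in L^1(0,1)$, so that dominated convergence applies and
\[
\partial_x^\beta\partial_t^j\varphi(x,y,t)=\int_0^1\lambda^{a+\beta_{N+1}}\,(\partial_x^\beta\partial_t^j g)(x,\lambda y,t)\,d\lambda,\qquad |\beta|+2j\le k.
\]
Taking the $L^\infty$ norm and the parabolic/time H\"older seminorms under the integral (integral form of Minkowski's inequality), using the previous step and $\lambda^{a+\beta_{N+1}}\le\lambda^a$, each term is bounded by $\int_0^1\lambda^a\,d\lambda=\frac{1}{1+a}$ times the corresponding term for $g$; summing over the finitely many multi-indices in the definition of the $C^{k,\alpha}_p$ norm yields $\varphi\in C^{k,\alpha}_p(Q_1^+)$ with $[\varphi]_{C^{k,\alpha}_p(Q_1^+)}\le C[g]_{C^{k,\alpha}_p(Q_1^+)}$, $C=C(a)$ (in fact $C=\tfrac1{1+a}$ works). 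This is the first claim.

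For the second claim I would avoid differentiating the singular prefactor $y^{-a}$ directly and instead use the relation $\psi=y\varphi$ together with the fundamental theorem of calculus:
\[
\partial_y\psi(x,y,t)=-\frac{a}{y^{1+a}}\int_0^y s^a g(x,s,t)\,ds+g(x,y,t)=g(x,y,t)-a\,\varphi(x,y,t),
\]
where I used $\frac{1}{y^{1+a}}\int_0^y s^a g\,ds=\psi/y=\varphi$. Hence $\partial_y\psi$ is a fixed linear combination of $g$ and $\varphi$; since $g\in C^{k,\alpha}_p(Q_1^+)$ by hypothesis and $\varphi\in C^{k,\alpha}_p(Q_1^+)$ by the first part, we get $\partial_y\psi\in C^{k,\alpha}_p(Q_1^+)$ and $[\partial_y\psi]_{C^{k,\alpha}_p(Q_1^+)}\le[g]_{C^{k,\alpha}_p(Q_1^+)}+|a|\,[\varphi]_{C^{k,\alpha}_p(Q_1^+)}\le C[g]_{C^{k,\alpha}_p(Q_1^+)}$. (Note that a generic first derivative $\partial_{x_i}\psi=y\,\partial_{x_i}\varphi$ is only $C^{k-1,\alpha}_p$, so it is essential to single out the $y$-derivative, which is exactly what the identity above does.)

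The only genuinely delicate point is the bookkeeping in the second paragraph: one must check that the rescaling $D_\lambda$ enlarges \emph{neither} the parabolic H\"older seminorm \emph{nor} the intermediate time-H\"older seminorm $[\,\cdot\,]_{C^{k-1,\frac{1+\alpha}{2}}_t}$ appearing in the definition of $C^{k,\alpha}_p$, and that the $\lambda$-integration commutes with all these seminorms; both are straightforward consequences of $\lambda\le1$ and Minkowski's integral inequality. Everything else is elementary. This mirrors the elliptic arguments of \cite{TerTorVit22}*{Lemma 2.4} and \cite{SirTerVit21a}*{Theorem 7.5}.
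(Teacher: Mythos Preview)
Your argument is correct and is a genuinely different, cleaner route than the one in the paper. The paper proves the $k=0$ case by hand: the $(x,t)$-directions are immediate, while for the $y$-direction it subtracts $g(x,0,t)$, splits into a ``far'' region $S_1=\{y_2-y_1\ge\delta y_2\}$ and a ``near'' region $S_2=\{y_2-y_1<\delta y_2\}$, and performs an explicit calculation in each. It then runs an induction on $k$, writing
\[
\partial_y\varphi(x,y,t)=-\frac{a+1}{y^{2+a}}\int_0^y s^{a+1}\,\frac{g(x,s,t)-g(x,0,t)}{s}\,ds+\frac{g(x,y,t)-g(x,0,t)}{y},
\]
and invoking Lemma~\ref{lemma-2.3-TTV} on the difference quotient together with the inductive hypothesis (with exponent $a+1$) on the first term.

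Your substitution $s=\lambda y$, giving $\varphi=\int_0^1\lambda^a(g\circ D_\lambda)\,d\lambda$, bypasses both the case split and the induction: once you observe that $D_\lambda$ is $1$-Lipschitz for $d_p$ when $\lambda\le1$ and that $\partial_z^\beta\partial_t^j(g\circ D_\lambda)=\lambda^{\beta_{N+1}}(\partial_z^\beta\partial_t^j g)\circ D_\lambda$, the integral form of the triangle inequality for each seminorm in the $C^{k,\alpha}_p$ definition gives the full result in one stroke, with the explicit constant $C=\tfrac{1}{1+a}$. Your derivation of $\partial_y\psi=g-a\varphi$ is the same reduction the paper uses. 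The only thing you might add, for completeness, is that the dominated-convergence justification for differentiating under the integral works equally well for the iterated derivatives needed in the $C^{k,\alpha}_p$ seminorm (you note this, but it is the one place a careful reader will pause); since $\lambda^{a+\beta_{N+1}}\le\lambda^a\in L^1(0,1)$ this is indeed fine.
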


\begin{proof}
First, notice that the second statement follows immediately from the first since $\partial_y \psi=-a\varphi+g.$

We prove the first statement by induction. Let $k=0$ and $g\in C^{0,\alpha}_p(Q_1^+)$. The parabolic H\"older continuity in $x$ and $t$ is trivially verified. Indeed, let $P_1=(x_1,y,t_1)$ and $P_2=(x_2,y,t_2)$, then
\begin{align*}
|\varphi(P_2)-\varphi(P_1)|\le \frac{1}{y^{1+\alpha}}\int_0^y s^a|g(x_1,s,t_1)-g(x_2,s,t_2)|ds\le \frac{[g]_{ C^{0,\alpha}_p(Q_1^+)}}{1+a} d_p(P_2,P_1)^\alpha.
\end{align*}
For $\delta>0$, let us consider
$$S_1:=\{(y_1,y_2): 0 < y_1< y_2\le 1,\text{ and } y_2-y_1\ge\delta y_2\},$$
$$S_2:=\{(y_1,y_2): 0< y_1< y_2\le 1,\text{ and } y_2-y_1<\delta y_2\}.$$
Taking $y_1,y_2\in S_1$, one has
\begin{align*}
|\varphi(x,y_2,t)-\varphi(x,y_1,t)| &= \left|\frac{1}{y_2^{a+1}}\int_0^{y_2}s^ag(x,s,t)ds-
\frac{1}{y_1^{a+1}}\int_0^{y_1}s^ag(x,s,t)ds
\right|\\
&=\left|\frac{1}{y_2^{a+1}}\int_0^{y_2}s^a(g(x,s,t)-g(x,0,t))ds-
\frac{1}{y_1^{a+1}}\int_0^{y_1}s^a(g(x,s,t)-g(x,0,t))ds
\right|\\
&\le \frac{[g]_{C^{0,\alpha}_p(Q_1^+)}}{y_2^{a+1}}\int_0^{y_2}s^{a+\alpha}ds+\frac{[g]_{C^{0,\alpha}_p(Q_1^+)}}{y_1^{a+1}}\int_0^{y_1}s^{a+\alpha}ds=\frac{[g]_{C^{0,\alpha}_p(Q_1^+)}}{a+\alpha+1}(y_2^\alpha+y_1^\alpha)\\
&\le \frac{ 2[g]_{C^{0,\alpha}_p(Q_1^+)}}{a+\alpha+1} y_2^\alpha\le \frac{2[g]_{C^{0,\alpha}_p(Q_1^+)}}{\delta^\alpha(a+\alpha+1)}(y_2-y_1)^\alpha.
\end{align*}
Let now $y_1,y_2\in S_2$. Then, 
\begin{align*}
|\varphi(x,y_2,t)-\varphi(x,y_1,t)| &=\left|\frac{1}{y_2^{a+1}}\int_0^{y_2}s^a(g(x,s,t)-g(x,0,t))ds-
\frac{1}{y_1^{a+1}}\int_0^{y_1}s^a(g(x,s,t)-g(x,0,t))ds
\right|\\
&\le\frac{1}{y_2^{a+1}}\int_{y_1}^{y_2}s^a|g(x,s,t)-g(x,0,t)|ds
+\left(\frac{1}{y_1^{a+1}}-\frac{1}{y_2^{a+1}}\right)\int_{0}^{y_1}s^a|g(x,s,t)-g(x,0,t)|ds\\
&\le \frac{[g]_{C^{0,\alpha}_p(Q_1^+)}}{a+\alpha+1}\left(
\frac{y_2^{a+\alpha+1}-y_1^{a+\alpha+1}}{y_2^{a+1}}+\left(\frac{1}{y_1^{a+1}}-\frac{1}{y_2^{a+1}}\right)y_1^{a+\alpha+1}\right)\\
&=\frac{[g]_{C^{0,\alpha}_p(Q_1^+)}}{a+\alpha+1} 
\left(y_2^\alpha-	
y_1^\alpha\left(\frac{y_1}{y_2}\right)^{a+1}
+y_1^\alpha\left( 1- \left(\frac{y_1}{y_2}\right)^{a+1}\right)
\right)\\
&=\frac{[g]_{C^{0,\alpha}_p(Q_1^+)}}{a+\alpha+1} \left(
y_2^\alpha+y_1^\alpha-2y_1^\alpha\left(\frac{y_1}{y_2}\right)^{a+1}\right) \\
&=\frac{[g]_{C^{0,\alpha}_p(Q_1^+)}}{a+\alpha+1} \left(
y_2^\alpha-y_1^\alpha+2y_1^\alpha \left(1-\left(\frac{y_1}{y_2}\right)^{a+1}\right)\right)\\
&\le \frac{[g]_{C^{0,\alpha}_p(Q_1^+)}}{a+\alpha+1} \left(
y_2^\alpha-y_1^\alpha+C_a y_1^\alpha \left(1-\frac{y_1}{y_2}\right)\right),
\end{align*}
where $C_a>0$ is a constant which depends only on $a$. Consequently, since by definition $y_1/y_2 > 1-\delta$, we have
\begin{align*}
&\frac{|\varphi(x,y_2,t)-\varphi(x,y_1,t)|}{(y_2-y_1)^\alpha}\le \frac{[g]_{C^{0,\alpha}_p(Q_1^+)}}{a+\alpha+1} \left(
\frac{y_2^\alpha-y_1^\alpha}{(y_2-y_1)^\alpha}+C_a\frac{y_1^\alpha(y_2-y_1)}{y_2(y_2-y_1)^\alpha}
\right)\le \frac{[g]_{C^{0,\alpha}_p(Q_1^+)}}{a+\alpha+1} \left(
1+C_a\delta^{1-\alpha} \right),
\end{align*}
and hence, the case $k=0$ follows.

Next, let us assume that the our claim is true for some $k \in \N$ and let us prove it for
$k+1$: we assume $g\in C^{k+1,\alpha}_p(Q_1^+)$ and show that $\varphi\in  C^{k+1,\alpha}_p(Q_1^+)$.

Since
$$
\partial_{x_i}\varphi =\frac{1}{y^{1+a}}\int_0^y s^a\partial_{x_i}g(x,s,t)ds, \quad i = 1,\dots,N, \qquad \partial_{t}\varphi =\frac{1}{y^{1+a}}\int_0^y s^a\partial_{t}g(x,s,t)ds,
$$
we immediately have that $\varphi$ is $C^{k+1}$ in $x$ and $t$. Moreover, the boundedness of the $C_t^{\frac{1+\alpha}{2}}$-seminorm of the mixed-derivates follows as the case $k = 0$. 

We are left to prove that $\partial_y\varphi\in C^{k,\alpha}_p(Q_1^+)$. To do this, we can rewrite $\varphi$ as
$$\varphi(x,y,t)=\frac{1}{y^{1+a}}\int_0^y s^a(g(x,s,t)-g(x,0,t))ds+\frac{g(x,0,t)}{a+1},$$
and observe that
$$\partial_y \varphi(x,y,t)=-\frac{a+1}{y^{2+a}}\int_0^y s^{a+1}\frac{g(x,s,t)-g(x,0,t)}{s}ds+\frac{g(x,y,t)-g(x,0,t)}{y}.$$
By Lemma \ref{lemma-2.3-TTV}, one has that $\frac{g(x,y,t)-g(x,0,t)}{y}\in C^{k,\alpha}_p(Q_1^+)$ and our claim follows by the inductive assumption.
\end{proof}
%

%
%%%%%%%%%%%%%%%%%%%%%%%%%%%%%%%%%%%%%%%%%%%%%%%%%%%%%%%%%%%%%%%%%%%%%%%%%%%%%%%%%%%%%%%%%%%%%%%%%%%%%%%%%%%%%%%%%%%%%%%%%%%%%%%%%%%%%%%%%%%%%%%%%%%%%%%%%%%%%%%%%
%
%

%
%
%%%%%%%%%%%%%%%%%%%%%%%%%%%%%%%%%%%%%%%%%%%%%%%%%%%%%%%%%%%%%%%%%%%%%%%%%%%%%%%%%%%%%%%%%%%%%%%%%%%%%%%%%%%%%%%%%%%%%%%%%%%%%%%%%%%%%%%%%%%%%%%%%%%%%%%%%%%%%%%%%%%%%%%%%%%%%%%%%%%%%%%%%%%%%%%%%%%%%

%
%
%

%
%
%%%%%%%%%%%%%%%%%%%%%%%%%%%%%%%%%%%%%%%%%%%%%%%%%%%%%%%%%%%%%%%%%%%%%%%%%%%%%%%%%%%%%%%%%%%%%%%%%%%%%%%%%%%%%%%%%%%%%%%%%%%%%%%%%%%%%%%%%%%%%%%%%%%%%%%%%%%%%%%%%%%%%%%%%%%%%%%%%%%%%%%%%%%%%%%%%%%%%
%
%
\section{Liouville theorem}
This section is devoted to the proof of the Liouville-type Theorem \ref{teo:polynomial:liouville}. We remark that, in the case $a \in (-1,1)$, entire solutions to \eqref{eq-liouville-polinomiale} satisfy the smoothness estimates in \cite[Theorem 1.1]{BanGar23}, hence, the proof of the Liouville theorem follows by a standard rescaling argument (for example, see \cite[Proposition 1.19]{XX22}).
\begin{proof}[Proof of Theorem \ref{teo:polynomial:liouville}] Let us fix $R>1$ and define 
     $$
     \tilde{\gamma}:=a^++2\gamma+N+3.
     $$ 

     \emph{Step 1.} 
     Choosing $r'=R$ and $r=2R$ in \eqref{caccioppoli.inequality} and using \eqref{growth-polinomiale}, we get 
     \begin{equation}\label{eq:liouville:caccioppoli:x}
           \int_{Q_R^+}y^a|\D u|^2 \le \frac{C}{R^2} \int_{Q_{2R}^+}y^a u^2 \le C R^{\tilde{\gamma}-2},
     \end{equation}
     for some $C>0$ depending only on $N$ and $a$. On the other hand, choosing $r'=R$ and $r=2R$ in \eqref{caccioppoli.in.difference} and combining \eqref{eq:liouville:caccioppoli:x} and \eqref{growth-polinomiale}, we obtain 
     \begin{equation}\label{eq:liouville:caccioppoli:t}
           \int_{Q_R^+}y^a (\partial_t u)^2  \le \frac{C}{R^4} \int_{Q_{4R}^+}y^a u^2 \le C R^{\tilde{\gamma}-4},
     \end{equation}
for some new $C>0$.

\smallskip

\emph{Step 2.} In this step we prove that $u$ is a polynomial in $x$. By Lemma \ref{lemma:derivative:i:solution}, for every multiindex $\beta\in \N^N$, $\partial_{x}^\beta u$ is a weak solution to \eqref{eq-liouville-polinomiale}. Then, by iterating \eqref{eq:liouville:caccioppoli:x}, one has
\[
\int_{Q_R}y^a(\partial_{x}^{\beta} u)^2\le \int_{Q_R}y^a|\D u|^2\le C R^{\tilde{\gamma}-2| \beta|}.
\]
Consequently, taking $\beta$ such that $\tilde{\gamma}-2| \beta|<0$ and passing to the limit as $R\to+\infty$, it follows $\partial_{x}^{\beta} u=0$ and therefore $u$ is a polynomial in the variable $x$, with degree less or equal than $m$ (the bound on the degree immediately follows by \eqref{growth-polinomiale}).

\smallskip

\emph{Step 3.} A slight modification of the above argument, which uses \eqref{eq:liouville:caccioppoli:t} instead of \eqref{eq:liouville:caccioppoli:x}, shows that $u$ is a polynomial in the variable $t$, with degree less or equal than $\lfloor \frac{m}{2}\rfloor$.

\smallskip

\emph{Step 4.} The last step is to prove that $u$ is polynomial in $y$. By \cite{AudFioVit24}*{Remark 4.4}, we notice that the even extension of $u$ w.r.t. $y$ is an entire solution to 
\begin{equation}\label{eq:liouville:y}
|y|^a\partial_t u-{\div}(|y|^a \nabla u)=0\quad\text{in }\mathbb{R}^{N+1}\times\mathbb{R}.
\end{equation}
Further, by \cite{AudFioVit24}*{Lemma 5.2}, $v:=|y|^a\partial_y u$ is an entire solution to 
\begin{equation*}
|y|^{-a}\partial_t u-{\div}(|y|^{-a} \nabla u)=0\quad\text{in }\mathbb{R}^{N+1}\times\mathbb{R},
\end{equation*}
while 
\begin{equation}\label{eq:liouville:w_1}
w_1:=|y|^{-a}\partial_y v=\partial_{yy}u-a\frac{\partial_y u}{y},
\end{equation}
is an entire solution to \eqref{eq:liouville:y}. Now, applying \eqref{caccioppoli.inequality} twice, we deduce
\begin{equation}\label{eq:inequality:w_1}
    \int_{{Q}_R}|y|^a w_1^2 \leq \int_{{Q}_R}|y|^{-a}|\nabla v|^2  \le \frac{C}{R^2}\int_{{Q}_{2R}}|y|^{-a}v^2
    \le  \frac{C}{R^2}\int_{{Q}_{2R}}|y|^{a}|\nabla u|^2\le \frac{C}{R^4}\int_{{Q}_{4R}}|y|^{a}u^2\le C R^{\tilde{\gamma}-4}.
\end{equation}
Setting 
\begin{equation}\label{eq:liouville:w_j}
    w_{j+1}:=\partial_{yy}w_{j}+a\frac{\partial_y w_{j}}{y},
\end{equation}
and noticing that $w_{j+1}$ is an entire solution to \eqref{eq:liouville:y} for $j\in\mathbb{N}_+$, we may iterate the argument above to show the existence of $k\in\mathbb{N}$ such that $\tilde{\gamma}-4{k}<0$ and 
$$
\int_{{Q}_R}|y|^a w_{{k}}\le C R^{\tilde{\gamma}-4{k}}.
$$
Hence, taking the limit as $R\to+\infty$, we obtain $w_k = 0$, that is
$$\partial_{yy}w_{{k-1}}+a\frac{\partial_y w_{{k-1}}}{y}=0.$$
The above ODE can be explicitly solved:
\begin{equation}\label{eq:ODESolStepk-1}
w_{k-1} = c_{2k-1}(x,t)y|y|^{-a}+c_{2k-2}(x,t),
\end{equation}
where $c_{2k-1}(x,t)$ and $c_{2k-2}(x,t)$ are polynomials. Now, iteratively solving the ODEs in \eqref{eq:liouville:w_j} and \eqref{eq:liouville:w_1}, we obtain an explicit formula for $u$: 
\begin{equation}\label{eq:all:entire:sol}
u=c_0(x,t)+\sum_{i\ge 1}y^{2i}c_{2i}(x,t)+\sum_{i\ge 1}y^{2i-1}|y|^{-a}c_{2i-1}(x,t),
\end{equation}
where $c_i(x,t)$ are polynomial. All solutions to \eqref{eq:liouville:y} satisfying a polynomial growth condition (without imposing any symmetry condition) have the form \eqref{eq:all:entire:sol}. Since $u$ is an even solution (which comes from the conormal condition at the hyperplane), $c_{2i-1}\equiv0$ for every $i\ge1$. Therefore, our statement follows from the growth assumption \eqref{growth-polinomiale}.
\end{proof}

In the following remark, we also provide a classification of the entire solutions to \eqref{eq-liouville-polinomiale} satisfying the growth condition \eqref{growth-polinomiale}. Such classification was already obtained in \cite[Lemma 3.2]{BanDanGarPet21} in the range $a\in (-1,1)$ (see also \cite{GarRos}*{Lemma 5.2} in the elliptic setting). We present the proof for completeness.

\begin{oss}
Let $a>-1$ and let $q_\kappa = q_\kappa(x,t)$ be a polynomial of parabolic degree $\kappa$ in $\R^{N}\times\R$. Then, there exists a unique polynomial $\tilde{q}_\kappa = \tilde{q}_\kappa(x,y,t)$ of parabolic degree $\kappa$ in $\R^{N}\times\R_+\times\R$ such that $\tilde{q}_\kappa$ satisfies \eqref{eq-liouville-polinomiale} and $\tilde{q}_\kappa(x,0,t)=q_\kappa(x,t)$ for every $(x,t) \in \R^{N}\times\R$. Moreover, 
\begin{equation}\label{eq:classification:formula}
\tilde{q}_\kappa(x,y,t)={q}_\kappa(x,t)+\sum_{i=1}^{\lfloor \kappa/2 \rfloor}\frac{y^{2i}}{2i!}c_{2i}(\partial_t -\Delta_{x})^i q_\kappa(x,t),\quad\text{where } c_{2i}=\prod_{j=1}^{i}\frac{2j-1}{2j-1+a}.
\end{equation}
\begin{proof}[Proof of \eqref{eq:classification:formula}]
We denote with $\Delta_{(x,y)}$ the Laplacian in the variables $(x,y)$, $\Delta_{x}$ the Laplacian in the variable $x$ and $(\partial_t-\Delta_{x})^i$ the heat operator applied $i$ times.  Let $M:=\lfloor \kappa/2 \rfloor$.

If such a polynomial $\tilde{q}_\kappa$ exists and satisfies the Neumann boundary condition $\lim_{y\to0^+}y^a \partial_y \tilde{q}_\kappa=0$, then
\begin{equation}\label{eq:clas:q:kappa}
\tilde{q}_\kappa(x,y,t)=q_\kappa(x,t)+\sum_{i=1}^{M}y^{2i}q_i(x,t),
\end{equation}
where $q_i(x,t)$ are polynomials such that $y^{2i} q_i(x,t)$ have parabolic degree at most $\kappa$.
Indeed, according to Theorem \ref{teo-C^1,alpha}, $\tilde{q}_k$ satisfies the stronger Neumann boundary condition $\lim_{y\to0^+} \partial_y \tilde{q}_\kappa=0$. This implies that $\tilde{q}_\kappa$ cannot contain a nontrivial term $yq_1(x,t)$. As in the proof of Theorem \ref{teo:polynomial:liouville}, we may iterate this argument to show that any term of the form $y^{2i+1}q_i(x,t)$ is identically zero.

Notice that
\begin{align}\label{eq:classification}
\begin{aligned}
&0=(\partial_t -\Delta_{(x,y)})\tilde{q}_\kappa-\frac{a}{y}\partial_y \tilde{q}_\kappa\\
&=(\partial_t -\Delta_x )q_\kappa +\sum_{i=1}^{M}y^{2i}(\partial_t -\Delta_x )q_i-\sum_{i=1}^{M}2i(2i-1)y^{2i-2}q_i-a\sum_{i=1}^{M} 2iy^{2i-2}q_i\\
&=(\partial_t-\Delta_x)q_\kappa-(2+2a)q_1+
\sum_{i=1}^{M-1}y^{2i}\big((\partial_t-\Delta_x) q_i-(2i+2)(2i+1+a)q_{i+1}\big)
+y^{2M}( \partial_t-\Delta_x )q_M.
\end{aligned}
\end{align}
Now, by iteratively solving  the equation in \eqref{eq:classification} we obtain
\begin{align}\label{eq:explicit:classification}
\begin{aligned}
&q_1=\frac{(\partial_t -\Delta_{x}){q}_\kappa}{2(1+a)},\\
&q_2= \frac{(\partial_t -\Delta_{x})q_1}{4(3+a)}=\frac{(\partial_t -\Delta_{x})^2q_\kappa}{4(3+a)2(1+a)},\\
&q_i=(\partial_t -\Delta_{x})^i q_\kappa\prod_{j=1}^{i}\frac{1}{2j(2j-1+a)}=\frac{(\partial_t -\Delta_{x})^i q_\kappa}{2i!}\prod_{j=1}^{i}\frac{2j-1}{2j-1+a},\qquad \text{ for } i\in\{1,\dots,M\}.
\end{aligned}
\end{align}
By construction, the function $\tilde{q}_\kappa$ defined in \eqref{eq:classification:formula} satisfies our statement. The uniqueness of $\tilde{q}_\kappa$ immediately follows by the explicit formula \eqref{eq:explicit:classification} and the linearity of
the differential operator.
\end{proof}
\end{oss}

%
%%%%%%%%%%%%%%%%%%%%%%%%%%%%%%%%%%%%%%%%%%%%%%%%%%%%%%%%%%%%%%%%%%%%%%%%%%%%%%%%%%%%%%%%%%%%%%%%%%%%%%%%%%%%%%%%%%%%%%%%%%%%%%%%%%%%%%%%%%%%%%%%%%%%%%%%%%%%%%%%%%%%%%%%%%%%%%%%%%%%%%%%%%%%%%%%%%%%%%%%%%%%%%%%%%
%
\section{$C^{2,\alpha}_p$ regularity}\label{section:c^2}

The goal of this section is to prove Theorem \ref{teo1} when $k=0$; that is, the following 
\begin{teo}\label{teo1-C2}
    Let $N\ge1$, $a>-1$, $r\in(0,1)$, $\alpha\in(0,1)$. Let $A\in C^{1,\alpha}_p(Q_1^+)$ satisfying \eqref{eq:UnifEll}, $f\in C^{0,\alpha}_p(Q_1^+)$ and $F\in C^{1,\alpha}_p(Q_1^+)$ and let $u$ be a weak solution to \eqref{eq:1}. Then, there exists $C>0$ depending only on $N$, $a$, $\lambda$, $\Lambda$, $r$, $\alpha$ and $\|A\|_{C^{1,\alpha}_p(Q_1^+)}$ such that
    \begin{equation}\label{eq:c2}
    \|u\|_{C^{2,\alpha}_p(Q_{r}^+)}\le C\Big(
    \|u\|_{L^2(Q_1^+,y^a)}+
    \|f\|_{C^{0,\alpha}_p(Q_1)}+
    \|F\|_{C^{1,\alpha}_p(Q_1)}
    \Big).
    \end{equation}
\end{teo}
The proof is based on some a priori estimates and an approximation argument we present below.

\subsection{A priori $C^{2,\alpha}_p$ estimates} We begin by showing the a priori $C^{2,\alpha}_p$ estimates, stated in the following
\begin{pro}\label{teo-C^2,alpha}
    Let $N\ge1$, $a>-1$, $\alpha \in (0,1)$ and $r\in(0,1)$. Let $A\in C^{1,\alpha}_p(Q_1^+)$ satisfying \eqref{eq:UnifEll}, $f\in C^{0,\alpha}_p(Q_1^+)$, $F\in C^{1,\alpha}_p(Q_1^+)$ and let $u\in C^{2,\alpha}_p(Q_1^+)$ be a weak solution to \eqref{eq:1}.
Then, there exists $C>0$ depending only on $N$, $a$, $\lambda$, $\Lambda$, $r$, $\alpha$, $\|A\|_{C^{1,\alpha}_p(Q_1^+)}$ such that
\eqref{eq:c2} holds.
\end{pro}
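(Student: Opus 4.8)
The plan is to argue by contradiction via a blow-up scheme, following the classical strategy of Simon \cite{simon} adapted to the weighted parabolic setting as in \cite{SirTerVit21a}. Suppose \eqref{eq:c2} fails. Then there exist sequences of coefficients $A_m$, data $f_m$, $F_m$ and weak solutions $u_m \in C^{2,\alpha}_p(Q_1^+)$ of \eqref{eq:1} (with the obstructed constant $C = m$), for which the left-hand side of \eqref{eq:c2} is larger than $m$ times the right-hand side. After fixing a radius $\rho \in (r,1)$ and working with the $C^{2,\alpha}_p$ seminorm on $Q_\rho^+$, the first step is to localize the blow-up: using Campanari-type characterizations of $C^{2,\alpha}_p$ (or, more directly, the quantity $\Theta(z_0,t_0,\varrho)$ measuring the deviation of $u_m$ from its parabolic second-order Taylor polynomial at scale $\varrho$), select points $(z_m,t_m)$ and radii $\varrho_m \to 0$ realizing (up to a fixed factor) the supremum defining the seminorm, with $[u_m]_{C^{2,\alpha}_p}$ normalized to $1$ after rescaling. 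The normalization forces $\|u_m\|_{L^2(Q_1^+,y^a)} + \|f_m\|_{C^{0,\alpha}_p} + \|F_m\|_{C^{1,\alpha}_p} \to 0$.

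\textbf{The rescaled sequence.} Next I would introduce the blow-up functions
\[
v_m(z,t) := \frac{u_m(z_m + \varrho_m z,\, t_m + \varrho_m^2 t) - \mathcal{P}_m(z_m + \varrho_m z,\, t_m + \varrho_m^2 t)}{\varrho_m^{2+\alpha}},
\]
where $\mathcal{P}_m$ is the second-order parabolic Taylor polynomial of $u_m$ at $(z_m,t_m)$ compatible with the conormal condition (the subtraction of $\mathcal{P}_m$ kills the low-order part, which is bounded but may blow up, and is the standard device). One must distinguish two cases depending on whether $y_m/\varrho_m$ stays bounded or diverges: in the first case the blow-up is centered near $\Sigma$ and the limit domain is $\R^{N+1}_+\times\R$ with the conormal condition on $\{y=0\}$; in the second case the weight $y^a$ becomes locally uniformly elliptic and the limit equation is a constant-coefficient (non-weighted) parabolic equation on all of $\R^{N+2}$. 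In either case, the $v_m$ satisfy: (i) a uniform $C^{2,\alpha}_p$ bound on compact sets (by construction of the normalization and an interpolation using Lemma with \eqref{lemma:interpolation}); (ii) $v_m(0)=0$, $\nabla v_m(0)=0$, $D^2 v_m(0)=0$, $\partial_t v_m(0)=0$ — the second-order Taylor polynomial at the origin vanishes; (iii) a nondegeneracy statement, namely that the $C^{2,\alpha}_p$ seminorm of $v_m$ on the unit ball is bounded below by a positive constant. Here the Caccioppoli-type estimates — Lemma \ref{CACCIO} and Lemma \ref{caccioppoli.della.t}, together with Lemma \ref{lemma:derivative:t:solution} and Lemma \ref{lemma:derivative:i:solution} allowing us to differentiate the equation — are used to control $\nabla v_m$, $\partial_t v_m$ and second derivatives in weighted $L^2$, so that we can pass to a limit.

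\textbf{Passing to the limit and Liouville.} By Arzelà–Ascoli (using the uniform local $C^{2,\beta}_p$ bound for some $\beta<\alpha$ and a diagonal argument) the $v_m$ converge in $C^{2,\beta}_{p,\loc}$ to some $v_\infty$, which by the convergence of the coefficients $A_m$ to a constant matrix $\bar A$ (after a linear change of variables reducing $\bar A$ to the identity, exploiting that the operator is invariant under such transformations when $\bar A$ is constant) is an entire weak solution to \eqref{eq-liouville-polinomiale} — this is where the conormal condition degenerating as $y^a$ survives the limit in the first case, and where the equation becomes the classical heat equation in the second. The growth condition \eqref{growth-polinomiale} with $\gamma < 3$ holds for $v_\infty$ because the deviation from the second-order Taylor polynomial of a $C^{2,\alpha}_p$ function with unit seminorm grows like $(|z|^2+|t|)^{(2+\alpha)/2}$, i.e. $\gamma = 2+\alpha < 3 = m+1$ with $m=2$. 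Theorem \ref{teo:polynomial:liouville} then forces $v_\infty$ to be a polynomial of parabolic degree at most $2$; combined with (ii) (all derivatives up to parabolic order $2$ vanish at the origin) this gives $v_\infty \equiv 0$, contradicting the nondegeneracy (iii), which passes to the limit. This contradiction proves the proposition.

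\textbf{Main obstacle.} I expect the delicate point to be the two-case analysis at the blow-up together with the correct choice of the subtracted polynomial $\mathcal{P}_m$: one must ensure the blow-up sequence both retains the conormal boundary structure in the limit (when $y_m/\varrho_m$ is bounded) and satisfies a clean uniform estimate, and that the normalization is genuinely scale-invariant for the $C^{2,\alpha}_p$ parabolic seminorm, where the anisotropic scaling $t \mapsto \varrho^2 t$ interacts nontrivially with the $C^{1,(1+\alpha)/2}_t$ part of the norm. A secondary technical issue is justifying that weak-solution bounds (the Caccioppoli inequalities) upgrade to the compactness needed to take $C^{2,\beta}_p$ limits — this requires the a priori assumption $u \in C^{2,\alpha}_p$ only to make the seminorm finite and the blow-up well-defined, while the uniform estimates themselves must come from the energy inequalities and interpolation, not from the (non-uniform) assumed bound.
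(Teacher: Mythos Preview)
Your proposal is correct and follows essentially the same blow-up/Liouville scheme as the paper. Two minor points: (i) rather than directly contradicting \eqref{eq:c2}, the paper first reduces to the interpolation-type estimate $[u]_{C^{2,\alpha}_p(Q_{1/2}^+)}\le \delta[u]_{C^{2,\alpha}_p(Q_1^+)}+C_\delta(\|D^2u\|_{L^\infty}+\|u\|_{L^\infty}+\dots)$, which keeps the seminorms on $Q_1^+$ and $Q_{1/2}^+$ comparable along the contradicting sequence and makes the normalization of the blow-up clean --- your direct route also works but needs the known $C^{1,\alpha}_p$ estimate to force $\|u_m\|_{L^\infty}\to0$; (ii) your last paragraph is slightly off: the uniform local $C^{2,\alpha}_p$ bound on the blow-ups comes \emph{directly} from the scaling and the division by $[u_m]_{C^{2,\alpha}_p}\varrho_m^{2+\alpha}$ (this is exactly \eqref{seminorm C^2}--\eqref{seminorm C^2 time} in the paper), not from Caccioppoli or energy inequalities --- those enter only inside the proof of the Liouville theorem, which here is a black box. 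The step that genuinely costs effort, and that you correctly anticipate, is verifying the limit equation in the boundary case $y_m/\varrho_m$ bounded, where the $a/y$ term and the conormal condition must be tracked through the blow-up; the paper spends most of its proof (its Step 5) on precisely this computation.
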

\begin{proof} The proof is divided in several steps as follows.

\smallskip

\emph{Step 1.} Without loss of generality we prove the statement for $r=1/2$. To simplify the notation, let $x_{N+1}=y$, $\partial_i:=\partial_{x_i}$ for $i=1,\dots,N+1$, and $\partial_{ij}:=\partial_i\partial_j$ for $i,j=1,\dots,N+1$. 
In the following, we will refer to the variable $y$ as either $y$ or $x_{N+1}$ depending on what seems more convenient. We begin with some preliminary observations.

\smallskip

\noindent $\bullet$ By the regularity assumptions and Theorem \ref{teo-C^1,alpha}, one has that $u$ satisfies the equation pointwise in $Q_1^+$, and so
\begin{equation}\label{eq:c2:pointwisely}
\Big(\partial_t u-\sum_{i,j=1}^{N+1}A_{i,j}\partial_{ij}u-\frac{a}{y}\sum_{j=1}^{N+1} A_{N+1,j}\partial_j u\Big) = \Big(g+\frac{a}{y}F_{N+1}\Big) \quad \text{ in } Q_1^+,
\end{equation}
where $g := \sum_{i,j=1}^{N+1}\partial_i A_{i,j}\partial_{j}u+f+\sum_{i=1}^{N+1} \partial_i F\in C^{0,\alpha}_p(Q_1^+)$ satisfies 
\begin{align}\label{eq:c2:interpolation:g}
\begin{aligned}
\|g\|_{C^{0,\alpha}_p(Q_1^+)}
&\le 
3\|A\|_{ C^{1,\alpha}_p(Q_1^+)}\|u\|_{ C^{1,\alpha}_p(Q_1^+)}
+\|f\|_{ C^{0,\alpha}_p(Q_1^+)}
+\|F\|_{ C^{1,\alpha}_p(Q_1^+)}\\
&\le 
C\left(\|u\|_{ L^\infty(Q_1^+)}
+\|D^2u\|_{ L^\infty(Q_1^+)}
+\|f\|_{ C^{0,\alpha}_p(Q_1^+)}
+\|F\|_{ C^{1,\alpha}_p(Q_1^+)}\right),
\end{aligned}
\end{align}
for some $C>0$ depending on $\|A\|_{ C^{1,\alpha}_p(Q_1^+)}$, thanks to the interpolation inequality \eqref{lemma:interpolation}.

\smallskip

\noindent $\bullet$ By the regularity assumptions on the data and $u$, and using the conormal boundary condition in \eqref{eq:1} (which is satisfied pointwise by Theorem \ref{teo-C^1,alpha}), we can take the limit as $y\to0^+$ in \eqref{eq:c2:pointwisely} to get
\begin{align}\label{eq:c2:boundary:pointwisely}
\begin{aligned}
\lim_{y	\to0^+}\frac{a\big(\sum_{j=1}^{N+1}A_{N+1,j}\partial_j u+F_{N+1}\big)(x,y,t)}{y}&=a\partial_y \big(\sum_{j=1}^{N+1}A_{N+1,j}\partial_j u+F_{N+1}\big)(x,0,t)\\
&=\Big(  \partial_t u-\sum_{i,j}A_{i,j}\partial_{ij}u -g\Big)(x,0,t),
\end{aligned}
\end{align}
for every $(x,0,t)\in\partial^0 Q_1^+$.

\smallskip

\noindent $\bullet$ It is enough prove that for every $\delta>0$ sufficiently small,
\begin{equation}\label{eq:c2:XX:estimate}
[ u]_{C^{2,\alpha}_p(Q_{1/2}^+)}\le \delta  [ u]_{C^{2,\alpha}_p(Q_{1}^+)}+C_\delta \left(\|D^2u\|_{L^\infty(Q_1^+)}+\|u\|_{L^\infty(Q_1^+)}+\|f\|_{C^{0,\alpha}_p(Q_1^+)}+
    \|F\|_{C^{1,\alpha}_p(Q_1^+)}\right),
\end{equation}
for some $C_\delta>0$ depending only on $\delta$, $N$, $a$, $\lambda$, $\Lambda$, $\alpha$, $\|A\|_{C^{1,\alpha}_p(Q_1^+)}$. We will show later how \eqref{eq:c2} follows by \eqref{eq:c2:XX:estimate}.
    
    \smallskip
        
    \emph{Step 2. Contradiction argument and blow-up sequences.} By contradiction we assume that there exist $\alpha\in(0,1)$, $A^{(k)},F^{(k)} \in C^{1,\alpha}_p(Q_1^+)$, $f_k\in C^{0,\alpha}_p(Q_1^+)$ with $\|A^{(k)}\|_{C^{1,\alpha}_p(Q_1^+)}\le C$ and $u_k\in C^{2,\alpha}_p(Q_1^+)$ such that
    \begin{equation}\label{equation u_k C^2}
        \begin{cases}
        y^a\partial_t u_k-{\div }(y^{a} A^{(k)}\nabla u_k)=y^af_k+{\div}(y^{a} F^{(k)}) &{\rm in  }\hspace{0.1cm}Q_1^+,\\
        \displaystyle{\lim_{y\to0^+}}y^a\left(A^{(k)}\nabla u_k+F^{(k)}\right)\cdot e_{N+1}=0&{\rm on  }\hspace{0.1cm}\partial^0Q_1^+,
    \end{cases}
    \end{equation}
    and there exists a small $\delta_0>0$ such that
    \begin{equation}\label{eq:c2:XX:contradiction}
[ u_k]_{C^{2,\alpha}_p(Q_{1/2}^+)}> \delta_0  [ u_k]_{C^{2,\alpha}_p(Q_{1}^+)}+k \left(\|D^2 u_k\|_{L^\infty(Q_1^+)}+\| u_k\|_{L^\infty(Q_1^+)}+\|f_k\|_{C^{0,\alpha}_p(Q_1^+)}+
    \|F^{(k)}\|_{C^{1,\alpha}_p(Q_1^+)}\right).
\end{equation}
 Let us define
   \begin{align*}
      L_k:=\max\Big\{&\big\{[\partial_{i j}u_k]_{C^{0,\alpha}_p(Q^+_{1/2})}:i,j=1,\dots,N+1\big\},
      [\partial_t u_k]_{C^{0,\alpha}_p(Q^+_{1/2})},\big\{[\partial_{i} u_k]_{C_t^\frac{1+\alpha}{2}(Q^+_{1/2})}i=1,\dots,N+1\big\}\Big\},
     \end{align*}
and distinguish two cases: first, we assume that there exist $i,j\in\{1,\dots,N+1\}$ such that 
\begin{equation}\label{eq:LkChoice}
L_k = [\partial_{i j} u_k]_{C^{0,\alpha}_p(Q^+_{1/2})}.
\end{equation}
Later we will deal with the second case, when $L_k=[\partial_{i} u_k]_{C_t^\frac{1+\alpha}{2}(Q^+_{1/2})}$. The case $L_k=[\partial_t u_k]_{C^{0,\alpha}_p(Q^+_{1/2})}$ is very similar to \eqref{eq:LkChoice} and we skip it.

\smallskip

Now, we consider two sequences of points $P_k(z_k,t_k), \bar{P}_k(\xi_k,\tau_k)\in Q^+_{1/2}$ such that 
$$
\frac{|\partial_{ij} u_k(P_k)-\partial_{ij} u_k(\bar{P}_k)|}{d_p(P_k,\bar{P}_k)^\alpha}\ge \frac{L_k}{2}, 
$$
and define $r_k:=d_p(P_k,\bar{P}_k)$. Notice that it must be $r_k\to0$ as $k\to+\infty$, since
$$ 
\frac{L_k}{2}\le \frac{|\partial_{ij} u_k(P_k)-\partial_{ij} u_k(\bar{P}_k)|}{d_p(P_k,\bar{P}_k)^\alpha}\le 2\frac{\|\partial_{ij} u_k\|_{L^\infty(Q_{1/2}^+)}}{r_k^\alpha} \leq 2\frac{[ u_k]_{C^{2,\alpha}_p(Q_{1/2}^+)}}{r_k^\alpha k} \leq 2\frac{L_k}{r_k^\alpha k},
$$
where we have used \eqref{eq:c2:XX:contradiction} and the definition of $L_k$.

\

Let $\hat{z}_k=(\hat{x}_k,\hat{y}_k)\in B_{1/2}^+$ to be specified below. For $k$ large, let us define 
 $$
 Q(k):=\frac{{B^+_1}-{\hat{z}_k}}{r_k}\times\frac{(-1-t_k,1-t_k)}{r_k^2},
 $$
 and set $Q^\infty:=\lim_{k\to+\infty}Q(k)$, along an appropriate subsequence. For $(z,t)\in Q(k)$, consider the blow-up sequence
\begin{equation}\label{w_k degenerate C^2}
    w_k(z,t):=\frac{u_k(r_k z+\hat{z}_k,r_k^2 t+t_k)-T_k(z,t)}{[u_k]_{C^{2,\alpha}_p(Q_1^+)} r_k^{2+\alpha}},
\end{equation}
where $T_k$ is the quadratic parabolic polynomial
$$
T_k(z,t)=u_k(\hat{z}_k,t_k)+r_k\sum_{i=1}^{N+1} \partial_i u_k(\hat{z}_k,t_k) x_i+\frac{r_k^2}{2}\sum_{i,j=1}^{N+1} \partial_{ij} u_k (\hat{z}_k,t_k) x_ix_j+r_k^2\partial_t u_k (\hat{z}_k,t_k) t.
$$
Notice that $w_k$ satisfies
\begin{equation}\label{eq:c2:fix0}
w_k(0)=|\D w_k(0)|=|D^2 w_k(0)|=\partial_t w_k(0)=0.
\end{equation}
At this point we distinguish two cases:

\smallskip

\noindent \textbf{Case 1:} 
$$\frac{y_k}{r_k}=\frac{d_p(P_k,\Sigma)}{r_k}\to+\infty, \quad \text{ as } k\to\infty.$$
In this case we set $\hat{z}_k=z_k$ and we have $Q^\infty=\mathbb{R}^{N+2}$.

\smallskip

\noindent \textbf{Case 2:} $$\frac{y_k}{r_k}=\frac{d_p(P_k,\Sigma)}{r_k}\le C,$$
for some $C > 0$ independent of $k$. In this case we set $\hat{z}_k=(x_k,0)$ and we have $Q^\infty=\R_+^{N+1}\times\R$.

\smallskip
    
    \emph{Step 3. H\"older estimates and convergence of the blow-up sequences.} Let us fix a compact set $K\subset Q^\infty$. Then, 
    $K\subset Q(k)$ for any $k$ large enough. By definition of the $C^{0,\alpha}_p$ seminorm and the parabolic scaling, for every $P=(z,t)$, $Q=(\xi,\tau)\in K$ and $i,j\in\{1,\dots,N+1\}$, we have
    \begin{align*}
    |\partial_{ij} w_k(P)-\partial_{ij} w_k(Q)|\le \frac{|\partial_{ij}u_k(r_kz+\hat{z}_k,r_k^2 t+t_k)-\partial_{ij}u_k (r_k \xi+\hat{z}_k,r_k^2 \tau+t_k)|}{[u_k]_{C^{2,\alpha}_p(Q_1^+)}r_k^\alpha}\le d_p(P,Q)^\alpha,
\end{align*}
and thus
\begin{equation}\label{seminorm C^2}
    \sup_{\substack{P,Q\in K\\P\not=Q}}\frac{ |\partial_{ij} w_k(P)-\partial_{ij} w_k(Q)|}{d_p(P,Q)^\alpha}\le 1.
\end{equation}
In a similar way, it is not difficult to obtain
\begin{equation}\label{seminorm C^2 t}
    \sup_{\substack{P,Q\in K\\P\not=Q}}\frac{ |\partial_{t} w_k(P)-\partial_{t} w_k(Q)|}{d_p(P,Q)^\alpha}\le 1.
\end{equation}
Further, for every $(z,t),(z,\tau)\in K$ and $i\in\{1,\dots,N+1\}$, there holds
\begin{align*}
    | \partial_i w_k(z,t)- \partial_i w_k(z,\tau)|\le \frac{|\partial_i u_k(r_k z+\hat{z}_k,r_k^2 t+t_k)-\partial_iu_k(r_k z+\hat{z}_k,r_k^2 \tau+t_k)|}{[u_k]_{C^{2,\alpha}_p(Q_1^+)}r_k^{1+\alpha}}
    \le |t-\tau|^\frac{1+\alpha}{2},
\end{align*}
which implies
\begin{equation}\label{seminorm C^2 time}
    \sup_{\substack{(z,t),(z,\tau)\in K\\t\not=\tau}}\frac{|\partial_{i}w_k(z,t)-\partial_{i} w_k(z,\tau)|}{|t-\tau|^{\frac{1+\alpha}{2}}}\le 1.
\end{equation}
Combining \eqref{seminorm C^2}, \eqref{seminorm C^2 t} and \eqref{seminorm C^2 time}, we deduce that $[w_k]_{C^{2,\alpha}_p(K)}$ is uniformly bounded in $k$, for every compact set $K\subset Q^\infty$ (notice that the estimates above are valid in both \textbf{Case 1} and \textbf{Case 2}, by definition of $Q^\infty$). Consequently, in light of \eqref{eq:c2:fix0}, $\|w_k\|_{C^{2,\alpha}_p(K)}$ is uniformly bounded as well, and so we may apply the Arzel\`a-Ascoli theorem to conclude that $w_k\to\Bar{w}$ in $C^{2,\gamma}_p(K)$, for every $\gamma \in (0,\alpha)$. Finally, a standard diagonal argument combined with \eqref{seminorm C^2}, \eqref{seminorm C^2 t} and \eqref{seminorm C^2 time}, shows that
\[
w_k \to \Bar{w} \quad \text{ in } C^{2,\gamma}_p(K), \quad \text{ for every } K \subset\subset Q^\infty,
\]
up to passing to a suitable subsequence, and 
\begin{equation}\label{eq:c2:global:holder}
    [\bar{w}]_{C^{2,\alpha}_p(Q^\infty)}\le C_N.
\end{equation}
for some $C_N>0$ which depends only on $N$, by the definition of the $C_p^{2,\alpha}$ seminorm.

\smallskip

    \emph{Step 4.} The next step is to prove that $\partial_{ij}\Bar{w}$ is not constant, where $i,j$ are the indexes fixed in \eqref{eq:LkChoice}. To do this, we consider two sequences of points in $Q(k)$, defined as
$$
S_k=\left(\frac{\xi_k-\hat{z}_k}{r_k},\frac{\tau_k-t_k}{r_k^2}\right), \qquad \Bar{S_k}:=\left(\frac{z_k-\hat{z}_k}{r_k},0\right), \quad k \in \N.
$$
In \textbf{Case 1}, one has $\hat{z}_k=z_k$, then $S_k\to S\in Q^\infty$, up to passing to a subsequence and $\Bar{S}_k=0$ for every $k$. Then, using the definition of $L_k$ and \eqref{eq:c2:XX:contradiction}, it follows
\[
|\partial_{ij}w_k(S_k)-\partial_{ij}w_k(\bar{S}_k)|=|\partial_{ij}{u}_k(\bar{P}_k)-\partial_{ij}u_k(P_k)|\ge \frac{L_k}{2[u_k]_{C^{2,\alpha}_p(Q_1^+)}}\ge C \delta_0,
\]
for some $C > 0$ independent on $k$ and thus, passing to the limit as $k \to +\infty$, we obtain $|\partial_{ij}\Bar{w}({S})-\partial_{ij}\Bar{w}(0)|\ge C \delta_0$, that is, $\partial_{ij}\Bar{w}$ is not constant.

In \textbf{Case 2} we can argue in a similar way: we have $\hat{z}_k = (x_k,0)$ and so $\bar{S}_k = \frac{y_k}{r_k}e_{n+1}$. Recalling that $\frac{y_k}{r_k}$ is uniformly bounded by definition, $S_k \to \bar{S}$, for some $\Bar{S}$, up to passing to a subsequence. On the other hand, the sequence $S_k$ can be written as 
$$
S_k=\left(\frac{\xi_k-{z}_k}{r_k},\frac{\tau_k-t_k}{r_k^2}\right)+\frac{y_k}{r_k}e_{N+1}.
$$
Therefore, $S_k \to S$ as $k \to +\infty$, for some $S\in Q^\infty$, up to passing to a subsequence and so, as above, we have $|\partial_{ij}\Bar{w}({S})-\partial_{ij}\Bar{w}(\bar{S})|\ge C\delta_0$ which shows our claim.

\smallskip

\emph{Step 5. The equation of the limit $\bar{w}$.} In this step, we derive the equation of $\bar{w}$: as in the steps above, we divide the proof in two additional steps (\textbf{Case 1} and \textbf{Case 2}).

\smallskip

\noindent \textbf{Case 1:} In this case, we have $r_k/y_k\to 0$ as $k\to +\infty$ and $\hat{z}_k = z_k$. Further, if $\bar{A}^{(k)}(z,t) := A^{(k)}(r_kz+\hat{z}_k,r_k^2t+t_k)$, $\bar{\rho}_k(y) := r_ky+y_k$ and $(\Bar{z},\Bar{t}) := \lim_{k\to+\infty}(\Hat{z_k},t_k)$, then, by the regularity assumptions on $A^{(k)}$, one has $\bar{A}^{(k)}\to\bar{A}$ as $k \to +\infty$, where $\bar{A} := \lim_{k\to+\infty} A^{(k)}(\Bar{z},\bar{t})$ is a symmetric matrix with constant coefficients satisfying  \eqref{eq:UnifEll}.

We claim that $\Bar{w}$ is an entire solution to
\begin{equation}\label{eq:c2:claim:not:degenerate}
\partial_t \Bar{w}-\div(\bar{A}\nabla\Bar{w})=0\quad\text{in }\mathbb{R}^{N+2}.
\end{equation} 

Let us fix a compact set $ K\subset Q^\infty$. By \eqref{eq:c2:pointwisely} and using estimates in parabolic H\"older spaces, ${w}_k$ satisfies 
    \begin{align*}
   &\Big|\partial_t {w}_k - \sum_{i,j=1}^{N+1}(\bar{A}_{i,j}^{(k)} \partial_{ij} {w}_k)\Big| \\
    &=\frac{1}{{[u_k]_{C^{2,\alpha}_p(Q_1^+)} r_k^{\alpha}}}\Big|
\partial_t u_k(r_k z+z_k,r_k^2 t+t_k)
 -  \sum_{i,j=1}^{N+1}({A}_{i,j}^{(k)} \partial_{ij} {u}_k)(r_k z+z_k,r_k^2 t+t_k)\\
   &    \quad-\partial_t u_k({z}_k,t_k) +  \sum_{i,j=1}^{N+1}{A}_{i,j}^{(k)} (r_k z+z_k,r_k^2 t+t_k)\partial_{ij} {u}_k(z_k,t_k)
    \Big|\\
     &=\frac{1}{{[u_k]_{C^{2,\alpha}_p(Q_1^+)} r_k^{\alpha}}}\Big|
     g_k(r_k z+z_k,r_k^2 t+t_k)
     +\frac{a\big(\sum_{j=1}^{N+1}A^{(k)}_{N+1,j}\partial_j u_k+F^{(k)}_{N+1}\big)(r_k z+z_k,r_k^2 t+t_k)}{r_ky+y_k}\\
   &    \quad-\partial_t u_k({z}_k,t_k) +  
     \sum_{i,j=1}^{N+1}{A}_{i,j}^{(k)}\partial_{ij} {u}_k(z_k,t_k)
      + \sum_{i,j=1}^{N+1}\big({A}_{i,j}^{(k)} (r_k z+z_k,r_k^2 t+t_k)-A^{(k)}_{i,j}(z_k,t_k)\big)\partial_{ij} {u}_k(z_k,t_k) 
     \Big|\\
     &\le \frac{1}{{[u_k]_{C^{2,\alpha}_p(Q_1^+)} r_k^{\alpha}}}\Big|
     g_k(r_k z+z_k,r_k^2 t+t_k)
     +\frac{a\big(\sum_{j=1}^{N+1}A^{(k)}_{N+1,j}\partial_j u_k+F^{(k)}_{N+1}\big)(r_k z+z_k,r_k^2 t+t_k)}{r_ky+y_k}\\
     &\quad - g_k(z_k,t_k)
     -\frac{a\big(\sum_{j=1}^{N+1}A^{(k)}_{N+1,j}\partial_j u_k+F^{(k)}_{N+1}\big)(z_k,t_k)}{y_k}\Big|
     +C  \frac{\|D^2 u_k\|_{L^\infty(Q_{1}^+)}}{[u_k]_{C^{2,\alpha}_p(Q_1^+)}}\\
     &=  \frac{\big|g_k(r_k z+z_k,r_k^2 t+t_k)-g_k(z_k,t_k)\big|}{{[u_k]_{C^{2,\alpha}_p(Q_1^+)} r_k^{\alpha}}}+\frac{C\|D^2 u_k\|_{L^\infty(Q_1^+)}}{[u_k]_{C^{2,\alpha}_p(Q_1^+)}}\\
   & \quad+ \frac{a}{{[u_k]_{C^{2,\alpha}_p(Q_1^+)} r_k^{\alpha}}}\Big|  
   \frac{ H_k(r_k z+z_k,r_k^2 t+t_k) }{r_k y+y_k} -\frac{H_k(z_k, t_k)}{y_k}  
   \Big|=\text{I}+\text{II}+\text{III},
    \end{align*}
where $C > 0$ is a new constant independent of $k$ (here and below the constant $C > 0$ depends on $K$: we omit this dependence to simplify the exposition) and
$$ 
H_k(z,t) = \sum_{j=1}^{N+1}(A^{(k)}_{N+1,j}\partial_j u_k)(z,t)+F^{(k)}_{N+1}(z,t),
$$ 
which satisfies
$H_k(x_k,0,t_k) = 0$, $\D H_k(x_k,0,t_k) = \partial_y H_k(x_k,0,t_k) e_{N+1}$, $H_k(z,t)/y \in C^{0,\alpha}(Q_1^+)$ by Lemma \ref{lemma-2.3-TTV} and $[H_k/y]_{C^{0,\alpha}(Q_1^+)}\le C[\D H_k]_{C^{1,\alpha}(Q_1^+)} \le C [u_k]_{C^{2,\alpha}_p(Q_1^+)}$, by the assumption \eqref{eq:c2:XX:contradiction}.

Now, by \eqref{eq:c2:XX:contradiction} and \eqref{eq:c2:interpolation:g}, we can estimate I as follows
\[
\text{I} : =\frac{\big|g_k(r_k z+z_k,r_k^2 t+t_k)-g_k(z_k,t_k)\big|}{[u_k]_{C^{2,\alpha}_p(Q_1^+)} r_k^{\alpha}}\le \frac{[g_k]_{C^{0,\alpha}_p(Q_1^+)}}{[u_k]_{C^{2,\alpha}_p(Q_1^+)}}\le \frac{1}{k}\to0,
\]
as $k\to+\infty$. The term II vanishes as well as $k\to+\infty$, by similar considerations. Finally, let us prove that III vanishes as $k\to+\infty$. First,
 \begin{align*}
&\Big|  
   \frac{ H_k(r_k z+z_k,r_k^2 t+t_k) }{r_k y+y_k} -\frac{H_k(z_k, t_k)}{y_k}  
   \Big| = \Big|  
   \frac{ H_k(r_k z+z_k,r_k^2 t+t_k) }{r_k y+y_k} -\frac{H_k(z_k, t_k)}{r_ky+y_k} -   \frac{r_ky}{y_k}\frac{ H_k(z_k,t_k) }{r_k y+y_k}
   \Big|\\
   & \le  \Big|  
   \frac{ H_k(r_k z+z_k,r_k^2 t+t_k) }{r_k y+y_k} 
   -\frac{H_k(z_k, t_k)}{r_ky+y_k} - \frac{\D H_k(z_k,t_k) \cdot r_k z}{r_k y+ y_k}   \Big| \\
  & + \Big| \frac{\D H_k(z_k,t_k) \cdot r_k z}{r_k y+ y_k} 
   - \frac{r_ky}{y_k}\frac{ H_k(z_k,t_k) }{r_k y+y_k}
   \Big| = \text{III}_i + \text{III}_{ii}.
\end{align*}
 By using the parabolic first order expansion of $H_k$, \eqref{eq:c2:XX:contradiction} and $r_k y + y_k \ge y_k/2$, one has that
 \begin{equation}\label{eq:III_i}
 |\text{III}_i|\le C \frac{[H_k]_{C^{1,\alpha}_p(Q_1^+)} r_k^{1+\alpha}}{r_k y+y_k} \le C [u_k]_{C^{2,\alpha}_p(Q_1^+)}r_k^{1+\alpha}y_k^{-1}.
 \end{equation}
Instead, we estimate the term III$_{ii}$ in the following way
\begin{align}\label{eq:III_ii}
\begin{split}
& |\text{III}_{ii}| \le \Big| \frac{\D H_k(z_k,t_k) \cdot r_k z}{r_k y+ y_k} - \frac{\D H_k(x_k,0,t_k) \cdot r_k z}{r_k y+ y_k}\Big| + \Big|
   \frac{\D H_k(x_k,0,t_k) \cdot r_k z}{r_k y+ y_k} - \frac{r_ky}{y_k}\frac{ H_k(z_k,t_k) }{r_k y+y_k}
   \Big|\\
&  \le C [H_k]_{C^{1,\alpha}_p(Q_1^+)} r_k y_k^{ \alpha-1}
\le C [u_k]_{C^{2,\alpha}_p(Q_1^+)}r_k y_k^{\alpha -1},
\end{split}
\end{align}
where, in order to estimate the second term in the previous inequality we have used the properties of $H_k$ stated above. Hence, combining \eqref{eq:III_i} and \eqref{eq:III_ii} we have that 
\[
|\text{III}| \le C \frac{r_k}{y_k} +  C \Big(\frac{r_k}{y_k}\Big)^{1-\alpha} \to 0, \quad \text{as }k\to+\infty,
\]
since in \textbf{Case 1}, $r_k/y_k \to 0$.
\[
\partial_t {w}_k - \sum_{i,j=1}^{N+1}(\bar{A}_{i,j}^{(k)} \partial_{ij} {w}_k) \to \partial_t \bar{w}-\sum_{i,j=1}^{N+1}\bar{A}_{i,j}  \partial_{ij}\bar{w} \quad \text{ locally uniformly in } \R^{N+2},
\]
as $k\to+\infty$ and hence, passing to the limit as $k\to+\infty$ into the equation of $w_k$ above \eqref{eq:c2:claim:not:degenerate} follows.

\

\noindent \textbf{Case 2:} In this case, we have $\hat{z_k}=(x_k,0)$ and $r_k/y_k\le C$ for some $C > 0$ independent of $k$. We claim that $\Bar{w}$ is a entire solution to    
    \begin{equation}\label{eq:c2:claim:degenerate}
    \begin{cases}
       y^a\partial_t \Bar{w}-\div(y^a\bar{A}\nabla\Bar{w})=0 &{\rm in  }\hspace{0.1cm}\mathbb{R}^{N+1}_+\times \mathbb{R},\\
        \displaystyle{\lim_{y\to0^+}}y^a\bar{A}\nabla \Bar{w}\cdot e_{N+1}=0&{\rm on  }\hspace{0.1cm}\partial\R_+^{N+1}\times\R.
    \end{cases}
    \end{equation}
    Let us fix a compact set $ K\subset Q^\infty$.  By using \eqref{eq:c2:pointwisely}, \eqref{eq:c2:boundary:pointwisely} and the fact that $(\hat{z}_k,t_k)$ belongs to $\partial^0Q_1^+$, ${w}_k$ satisfies 
    \begin{align*}
    \mathcal{L}w_k &:= \partial_t {w}_k - \sum_{i,j=1}^{N+1}(\bar{A}_{i,j}^{(k)} \partial_{ij} {w}_k) - \frac{a}{y}\sum_{j=1}^{N+1}\big(\bar{A}^{(k)}_{N+1,j}\partial_j w_k\big) \\
    &=\frac{1}{{[u_k]_{C^{2,\alpha}_p(Q_1^+)} r_k^{\alpha}}}\Big[
    \partial_t u_k(r_k z+\hat{z}_k,r_k^2 t+t_k)
 -  \sum_{i,j=1}^{N+1}\big({A}_{i,j}^{(k)} \partial_{ij} {u}_k\big)(r_k z+\hat{z}_k,r_k^2 t+t_k)\\
 &\quad -\frac{a}{r_k y}  \sum_{j=1}^{N+1}\big({A}^{(k)}_{N+1,j}\partial_j u_k\big)(r_kz+\hat{z}_k,r_k^2t+t_k)
 -\partial_t u_k(\hat{z}_k,t_k)
 -  \sum_{i,j=1}^{N+1}{A}_{i,j}^{(k)}(r_kz+\hat{z}_k,r_k^2t+t_k) \partial_{ij} {u}_k(\hat{z}_k,t_k)\\
 &\quad +\frac{ a}{r_k y}\sum_{j=1}^{N+1}{A}^{(k)}_{N+1,j}(r_kz+\hat{z}_k,r_k^2t+t_k)\partial_j u_k(\hat{z}_k,t_k)
 +\frac{a}{ r_k y}\sum_{i,j=1}^{N+1}{A}^{(k)}_{N+1,j}(r_kz+\hat{z}_k,r_k^2t+t_k)\partial_{ij}u_k(\hat{z}_k,t_k)r_k x_i \Big]\\
 &=\frac{g_k(r_k z+z_k,r_k^2 t+t_k)-g_k(\hat{z}_k,t_k)-\sum_{i,j=1}^{N+1}\Big({A}_{i,j}^{(k)}(r_kz+\hat{z}_k,r_k^2t+t_k)-{A}_{i,j}^{(k)}(\hat{z}_k,t_k) \Big)\partial_{ij} {u}_k(\hat{z}_k,t_k)}{{[u_k]_{C^{2,\alpha}_p(Q_1^+)} r_k^{\alpha}}} \\
    &\quad+\frac{a}{{[u_k]_{C^{2,\alpha}_p(Q_1^+)} r_k^{\alpha}}} \bigg[\frac{F^{(k)}_{N+1}(r_kz+\hat{z}_k,r_k^2t+t_k)}{r_k y}
    -\partial_y\Big(  \sum_{j=1}^{N+1}A_{N+1,j}\partial_j u+F_{N+1}  \Big)(\hat{z}_k,t_k)   \\
 &\quad +\frac{ 1}{r_k y}\sum_{j=1}^{N+1}{A}^{(k)}_{N+1,j}(r_kz+\hat{z}_k,r_k^2t+t_k)\partial_j u_k(\hat{z}_k,t_k)
 +\frac{1}{r_k  y}\sum_{i,j=1}^{N+1}{A}^{(k)}_{N+1,j}(r_kz+\hat{z}_k,r_k^2t+t_k)\partial_{ij}u_k(\hat{z}_k,t_k)r_kx_i \bigg] \\
 &:= \text{J} + \text{JJ}.
\end{align*}
Similar to \textbf{Case 1}, \text{J} vanishes as $k\to+\infty$ (see the proof for I and II above). We are left to treat $\text{JJ}$. By Lemma \ref{lemma:derivative:i:solution} and Theorem \ref{teo-C^1,alpha}, we may differentiate $u_k$ w.r.t. $x_i$ ($i=1,\dots,N$) and $\partial_i u$ satisfies the following conormal boundary condition
    \begin{equation}\label{eq:c2:conormal:derivative}
    \lim_{y\to0^+}\partial_i\big(\sum_{j=1}^{N+1}A^{(k)}_{N+1,j}\partial_j u_k+F^{(k)}_{N+1}\big)=0,
    \end{equation}
and thus, recalling the conormal boundary condition of $u_k$, we deduce
$$
\text{JJJ} := -\frac{\sum_{j=1}^{N+1}\big(A^{(k)}_{N+1,j}\partial_j u_k\big)(\hat{z}_k,t_k)
    +F^{(k)}_{N+1}(\hat{z}_k,t_k)+
\sum_{i=1}^{N}\partial_i\big(\sum_{j=1}^{N+1}A^{(k)}_{N+1,j}\partial_j u_k+F^{(k)}_{N+1}\big)(\hat{z}_k,t_k)r_k x_i    
    }{r_k y} = 0.
$$
Adding \text{JJ} and \text{JJJ}, expanding $F^{(k)}$ and $A^{(k)}$ at order one and using the estimates in parabolic H\"older spaces, we obtain
    \begin{align*}
    &\Big|\frac{F^{(k)}_{N+1}(r_kz+\hat{z}_k,r_k^2t+t_k)}{r_k y}
    -\partial_y\Big(  \sum_{j=1}^{N+1}A_{N+1,j}\partial_j u+F_{N+1}  \Big)(\hat{z}_k,t_k)   \\
 &+\frac{ 1}{r_k y}\sum_{j=1}^{N+1}{A}^{(k)}_{N+1,j}(r_kz+\hat{z}_k,r_k^2t+t_k)\partial_j u_k(\hat{z}_k,t_k)
 +\frac{1}{ r_k y}\sum_{i,j=1}^{N+1}{A}^{(k)}_{N+1,j}(r_kz+\hat{z}_k,r_k^2t+t_k)\partial_{ij}u_k(\hat{z}_k,t_k)r_kx_i \Big|\\
 &= \Big|\frac{F^{(k)}_{N+1}(r_kz+\hat{z}_k,r_k^2t+t_k)-F^{(k)}_{N+1}(\hat{z}_k,t_k)-\sum_{i=1}^{N+1}\partial_i(F^{(k)}_{N+1})(\hat{z}_k,t_k)r_kx_i}{r_k y}\\
 &+\frac{\sum_{j=1}^{N+1}\Big({A}^{(k)}_{N+1,j}(r_kz+\hat{z}_k,r_k^2t+t_k)
- {A}^{(k)}_{N+1,j}(\hat{z}_k,t_k)-\sum_{i=1}^{N+1}\partial_i{A}^{(k)}_{N+1,j}(\hat{z}_k,t_k)r_k x_i
 \Big)\partial_{j}u_k(\hat{z}_k,t_k)}{r_k y}\\
 &+\frac{\sum_{i,j=1}^{N+1}\Big({A}^{(k)}_{N+1,j}(r_kz+\hat{z}_k,r_k^2t+t_k)
 -{A}^{(k)}_{N+1,j}(\hat{z}_k,t_k)\Big)\partial_{i,j}u_k(\hat{z}_k,t_k)r_k x_i
 }{r_k y}\Big|\\
 &\le Cr_k^\alpha\Big([F^{(k)}]_{C^{1,\alpha}_p(Q_1^+)}
 +[A^{(k)}]_{C^{1,\alpha}_p(Q_1^+)}\|\nabla u_k\|_{L^\infty(Q_1^+)}
 +[A^{(k)}]_{C^{0,1}_p(Q_1^+)}\|D^2 u_k\|_{L^\infty(Q_1^+)}
 \Big)\le \frac{Cr_k^\alpha[u_k]_{C^{2,\alpha}_p(Q_{1/2}^+)}}{k}.
\end{align*}
Consequently, $|\mathcal{L}w_k| = o(1)$, as $k\to+\infty$. As in \textbf{Case 1}, by \emph{Step 3}, one has 
    $$
\mathcal{L}w_k\to \partial_t \bar{w} - \sum_{i,j=1}^{N+1}\bar{A}_{i,j}\partial_{ij}\bar{w} - \frac{a}{y}\sum_{j=1}^{N+1}\bar{A}_{N+1,j}\partial_{j}\bar{w} \quad \text{ locally uniformly in } \R_+^{N+1},
$$
as $k\to+\infty$, and so $\bar{w}$ satisfies the equation in \eqref{eq:c2:claim:degenerate} in the classical sense. It remains to prove that $\bar{w}$ satisfies the conormal boundary condition in \eqref{eq:c2:claim:degenerate}. Since $u_k$ satisfies \eqref{eq:c2:conormal:derivative} and following the arguments above, we find
\begin{align*}
&\Big|\sum_{j=1}^{N+1}\big(\bar{A}^{(k)}_{N+1,j}\partial_j{w}_k\big)\Big| =  \frac{1}
{{[u_k]_{C^{2,\alpha}_p(Q_1^+)} r_k^{1+\alpha}}}\Big|\sum_{j=1}^{N+1}
\big({A}^{(k)}_{N+1,j}\partial_j{u}_k\big)(r_kz+\hat{z}_k,r_k^2t+t_k) \\
&\quad-\sum_{j=1}^{N+1}{A}^{(k)}_{N+1,j}(r_kz+\hat{z}_k,r_k^2t+t_k)\partial_j{u}_k(\hat{z}_k,t_k)  
-\sum_{j=1}^{N+1}{A}^{(k)}_{N+1,j}(r_kz+\hat{z}_k,r_k^2t+t_k)\partial_{ij}{u}_k(\hat{z}_k,t_k)  r_kx_i
 \Big|\\
&=\frac{1}
{{[u_k]_{C^{2,\alpha}_p(Q_1^+)} r_k^{1+\alpha}}}\Big|-F^{(k)}_{N+1}(r_kz+\hat{z}_k,r_k^2t+t_k)\\
&\quad-\sum_{j=1}^{N+1}\Big(   {A}^{(k)}_{N+1,j}(r_kz+\hat{z}_k,r_k^2t+t_k)-{A}^{(k)}_{N+1,j}(\hat{z}_k,t_k)  
    -\sum_{i=1}^{N+1}\partial_i({A}^{(k)}_{N+1,j})(\hat{z}_k,t_k)r_k x_i  
\Big)\partial_j{u}_k(\hat{z}_k,t_k)  \\
&\quad-\sum_{j=1}^{N+1}\big({A}^{(k)}_{N+1,j}\partial_j{u}_k\big)(\hat{z}_k,t_k)
-\sum_{i,j=1}^{N+1}\big(\partial_i{A}^{(k)}_{N+1,j}\partial_j{u}_k\big)(\hat{z}_k,t_k)r_kx_i-\sum_{i,j=1}^{N+1}\big({A}^{(k)}_{N+1,j}\partial_{ij}{u}_k\big)(\hat{z}_k,t_k)r_kx_i\\
&\quad -\sum_{i,j=1}^{N+1}\Big(   {A}^{(k)}_{N+1,j}(r_kz+\hat{z}_k,r_k^2t+t_k)-{A}^{(k)}_{N+1,j}(\hat{z}_k,t_k)  \Big)\partial_{ij}u_k(\hat{z}_k,t_k) r_k x_i\Big|\\
&\le\frac{1}
{{[u_k]_{C^{2,\alpha}_p(Q_1^+)} r_k^{1+\alpha}}}
\Big|-F^{(k)}_{N+1}(r_kz+\hat{z}_k,r_k^2t+t_k)+F^{(k)}_{N+1}(\hat{z}_k,t_k) + \sum_{i=1}^{N+1} \partial_i F^{(k)}_{N+1}(\hat{z}_k,t_k) r_k x_i\\
&\quad- \sum_{i=1}^{N+1} \partial_i F^{(k)}_{N+1}(\hat{z}_k,t_k) r_k x_i- \sum_{i,j=1}^{N+1} \partial_i \big({A}^{(k)}_{N+1,j}\partial_j{u}_k\big)(\hat{z}_k,t_k)r_kx_i\Big|+o(1)\\
&\le \frac{\Big|\partial_y\Big(\sum_{j=1}^{N+1}
{A}^{(k)}_{N+1,j}\partial_j{u}_k+F^{(k)}_{N+1}\Big)(\hat{z}_k,t_k)r_ky\Big|
}
{{[u_k]_{C^{2,\alpha}_p(Q_1^+)} r_k^{1+\alpha}}}+o(1)=o(1),
\end{align*}
as $k \to + \infty$. Thus, passing to the limit as $y\to0^+$, we obtain 
$$
\lim_{y \to 0^+}\Big|\sum_{j=1}^{N+1}\big(\bar{A}^{(k)}_{N+1,j}\partial_j{w}_k\big) \Big|\le o(1),
$$
and thus, taking the limit as $k\to+\infty$, it follows 
$$ 
\displaystyle{\lim_{y\to0^+}}\bar{A}\nabla \Bar{w}\cdot e_{N+1}=0.
$$
Combining this with the fact that $\bar{w} \in C^{2,\alpha}_p$ by \eqref{eq:c2:global:holder} and recalling that $a>-1$, we have
\[
\lim_{y\to0^+}y^a\bar{A}\nabla \Bar{w}\cdot e_{N+1 } = \lim_{y\to0^+}y^{1+a} \lim_{y\to0^+} \frac{\bar{A}\nabla \Bar{w}\cdot e_{N+1}}{y} = \partial_y(\bar{A}\nabla \Bar{w}\cdot e_{N+1})|_{y=0} \cdot \lim_{y\to0^+}y^{1+a} = 0,
\]
and so, the proof of \eqref{eq:c2:claim:degenerate} is completed.

\smallskip
    
    \emph{Step 6. Liouville theorems.} Since $\Bar{w} \in C^{2,\alpha}_p(Q^\infty)$, see \eqref{eq:c2:global:holder}, it satisfies the growth condition 
    $$
    |\bar{w}(z,t)|\le C(1+(|z|^2+|t|)^{2+\alpha})^{1/2}.
    $$
Moreover, $\Bar{w}$ has at least one non-constant second derivative and is an entire solution to \eqref{eq:c2:claim:not:degenerate} or \eqref{eq:c2:claim:degenerate}.
    Then, in \textbf{Case 1} we can invoke the Liouville Theorem for the heat equation (see \cite{AudFioVit24}*{Remark 5.3}) and in \textbf{Case 2} we can invoke the Liouville Theorem \ref{teo:polynomial:liouville} to reach the desired contradiction. 
\smallskip
    
    \emph{Step 7.} We complete the analysis, considering the case when 
    $$
    L_k=[\partial_i u]_{C^{0,\frac{1+\alpha}{2}}_t(Q_{1/2}^+)},
    $$ 
    for some $i\in\{1,\dots,N+1\}$. We give a short sketch, pointing out  the main differences respect to what did above. 

\smallskip

    We take two sequences of points $P_k=(z_k,t_k),\Bar{P}_k=(z_k,\tau_k)\in Q^+_{1/2}$, such that
\begin{equation}\label{eq:explosion:time:seminorm}
\frac{|\partial_i u_k(z_k,t_k)-\partial_i u_k(z_k,\tau_k)|}{|t_k-s_k|^\frac{1+\alpha}{2}}\ge\frac{L_k}{2},
\end{equation}
and set $r_k:=d_p(P_k,\bar{P}_k)=|t_k-\tau_k|^{1/2}$. We define the blow-up sequence $w_k$ as in \eqref{w_k degenerate C^2}, centered in $P_k$.

\smallskip

The \emph{Steps 3, 5, 6} are the same as above. The only crucial difference is in \emph{Step 4}: in this case, one has that $\partial_i \bar{w}$ is non-constant in $t$. Indeed,
\begin{align*}
    &\Big|\partial_i w_k\left(0,\frac{t_k-\tau_k}{r_k^2}\right)-\partial_i w_k(0,0)\Big| \ge \frac{L_k}{2[u_k]_{C^{2,\alpha}_p(Q_1^+)}}\ge C_N\delta_0.
\end{align*}
Taking the limit as $k\to+\infty$, we obtain that $|\partial_i\bar{w}(0,\bar{t})-\partial_i\bar{w}(0,0)|\ge C_N\delta_0$, where $\bar{t}=\lim_{k\to+\infty}\frac{t_k-\tau_k}{r_k^2}$. This allows as to conclude the proof of \eqref{eq:c2:XX:estimate} by applying Theorem \ref{teo:polynomial:liouville}.

\smallskip

    \emph{Step 8. Conclusion.} Finally, we briefly explain why \eqref{eq:c2:XX:estimate} implies \eqref{eq:c2} (see \cite{XX22}*{Theorem 2.20 and Lemma 2.27} in the elliptic setting). First, by using a covering argument and the interpolation inequalities in \eqref{lemma:interpolation}, we have that \eqref{eq:c2:XX:estimate} is satisfied in every $Q^+_\rho(P_0)\subset Q_1^+$, that is,
    \begin{equation}\label{eq:c2:XX:estimate2}
[ u]_{C^{2,\alpha}_p(Q^+_{\rho/2}(P_0))}\le \delta  [ u]_{C^{2,\alpha}_p(Q^+_\rho(P_0))}+C_\delta \left(\|u\|_{L^\infty(Q_1^+)}+\|f\|_{C^{0,\alpha}_p(Q_1^+)}+
    \|F\|_{C^{1,\alpha}_p(Q_1^+)}\right).
\end{equation}  
    Now, let us define the seminorm
    \begin{equation}\label{eq:def:holder*}
    [u]^*_{2,\alpha,Q_1^+}:=\sup_{Q^+_\rho(P_0)\subset Q_1^+}\rho^{2+\alpha}[u]_{C^{2,\alpha}_p(Q^+_{\rho/2}(P_0))}.
    \end{equation}
    By using the sub-additivity of the H\"older seminorms respect to unions of convex sets, one can prove that
    \begin{equation}\label{eq:conclusion1}
    [u]^*_{2,\alpha,Q_1^+}\le C \sup_{Q^+_\rho(P_0)\subset Q_1^+}\rho^{2+\alpha}[u]_{C^{2,\alpha}_p(Q^+_{\rho/4}(P_0))},
    \end{equation}
    for some constant $C>0$ depending only on $N$ and $\alpha$. Then, by \eqref{eq:c2:XX:estimate2} and \eqref{eq:def:holder*}, we obtain
    $$\rho^{2+\alpha}[ u]_{C^{2,\alpha}_p(Q^+_{\rho/4}(P_0))}\le \delta [u]^*_{2,\alpha,Q_1^+}+C_\delta \left(\|u\|_{L^\infty(Q_1^+)}+\|f\|_{C^{0,\alpha}_p(Q_1^+)}+
    \|F\|_{C^{1,\alpha}_p(Q_1^+)}\right).$$
    Taking the supremum over $Q^+_\rho(P_0)\subset Q_1^+$ and recalling \eqref{eq:conclusion1}, it follows
    $$\frac{1}{C} [u]^*_{2,\alpha,Q_1^+}\le \delta [u]^*_{2,\alpha,Q_1^+}+C_\delta \left(\|u\|_{L^\infty(Q_1^+)}+\|f\|_{C^{0,\alpha}_p(Q_1^+)}+
    \|F\|_{C^{1,\alpha}_p(Q_1^+)}\right).$$
    Hence our statement follows by taking $\delta>0$ small enough and using the interpolation inequality \eqref{lemma:interpolation}.
\end{proof}
\subsection{A regularization scheme} In this second step, we proceed with a regularization argument: this allows to apply the a priori estimates above and prove Theorem \ref{teo1-C2}.
\begin{lem}\label{lemma:approx:c2}
Let $N\ge1$, $a>-1$, $r\in(0,1)$, $\alpha\in (0,1)$. Let $A \in C^\infty(Q_1^+)$ satisfying \eqref{eq:UnifEll} and $f,F\in C^\infty(Q_1^+)$, and let $u$ be a weak solution to \eqref{eq:1}. Then $u\in C^{2,\alpha}_p(Q_{r}^+)$. 
\end{lem}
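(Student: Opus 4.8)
The plan is to upgrade the weak solution $u$ to $C^{2,\alpha}_p(Q_r^+)$ in two stages: first bootstrap full regularity in the ``non‑degenerate'' directions $x_1,\dots,x_N$ and $t$ by repeatedly differentiating the equation and invoking the $C^{1,\alpha}_p$‑theory of Theorem \ref{teo-C^1,alpha}; then recover the remaining pure $y$‑regularity, namely $\partial_{yy}u\in C^{0,\alpha}_p$ and a time‑modulus estimate for $\partial_y u$, directly from the equation read as an ODE in $y$ together with Lemma \ref{lemma-2.4-TTV}.

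\emph{Bootstrapping in $x$ and $t$.} Since $A,f,F\in C^\infty(Q_1^+)$, in particular $f\in L^p(Q_1^+,y^a)$ for every $p$, Theorem \ref{teo-C^1,alpha} gives $u\in C^{1,\alpha}_p(Q_\rho^+)$ for every $\rho<1$ and every $\alpha\in(0,1)$. I would then prove, by induction on $|\beta|+2j$, that $\partial_x^\beta\partial_t^j u\in C^{1,\alpha}_p(Q_\rho^+)$ for every multi‑index $\beta\in\N^N$ (no $y$‑derivatives), every $j\in\N$ and every $\rho<1$. In the inductive step one peels off a single $\partial_{x_i}$ with $i\le N$ (using Lemma \ref{lemma:derivative:i:solution}) or a single $\partial_t$ (using Lemma \ref{lemma:derivative:t:solution}): the differentiated function solves again a problem of the class \eqref{eq:1}, with the same matrix $A$ and with forcing terms built out of derivatives of the smooth data $A,f,F$ and out of gradients of the lower‑order derivative of $u$, which lie in $C^{0,\alpha}_p\cap L^p$ by the inductive hypothesis; so Theorem \ref{teo-C^1,alpha} applies once more. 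As an outcome, on $Q_r^+$ we already know $\partial_t u\in C^{0,\alpha}_p$, $\partial_{x_ix_j}u\in C^{0,\alpha}_p$ whenever $\min(i,j)\le N$ (in particular $\partial_{x_iy}u=\partial_y(\partial_{x_i}u)\in C^{0,\alpha}_p$), and $[\partial_{x_i}u]_{C^{0,\frac{1+\alpha}{2}}_t}<\infty$ for $i\le N$. Hence the only pieces of the $C^{2,\alpha}_p(Q_r^+)$‑norm still missing are $[\partial_{yy}u]_{C^{0,\alpha}_p(Q_r^+)}$ and $[\partial_y u]_{C^{0,\frac{1+\alpha}{2}}_t(Q_r^+)}$.

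\emph{The $y$‑direction.} For the time‑modulus of $\partial_y u$: by Lemma \ref{lemma:derivative:t:solution} the function $v:=\partial_t u$ solves a problem of the class \eqref{eq:1} with smooth forcing term $\partial_t f$ and $C^{0,\alpha}_p$ lower‑order term $\partial_t A\,\nabla u+\partial_t F$, so Theorem \ref{teo-C^1,alpha} gives $v\in C^{1,\alpha}_p(Q_\rho^+)$ and therefore $\partial_t\partial_y u=\partial_y v\in C^{0,\alpha}_p(Q_\rho^+)\subset L^\infty(Q_r^+)$; since $\partial_y u$ is then absolutely continuous in $t$ with bounded $t$‑derivative and $\frac{1+\alpha}{2}<1$, integrating in $t$ yields $[\partial_y u]_{C^{0,\frac{1+\alpha}{2}}_t(Q_r^+)}<\infty$. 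For $\partial_{yy}u$: by Theorem \ref{teo-C^1,alpha} the equation holds pointwise in $Q_1^+$ and the conormal condition on $\partial^0Q_r^+$, and, dividing \eqref{eq:1} by $y^a$ and splitting the divergence into its $\partial_y$‑part and its $x_1,\dots,x_N$‑parts, we may rewrite it, for a.e.\ fixed $(x,t)$, as the first‑order ODE in $y$
\[
-\partial_y\big(y^a G\big)=y^a R,\qquad G:=(A\nabla u+F)\cdot e_{N+1},\quad R:=f+\div_x F+\div_x(A\nabla u)-\partial_t u,
\]
where $\div_x$ denotes divergence in $x_1,\dots,x_N$ only; by the previous step $R\in C^{0,\alpha}_p(Q_r^+)$. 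Integrating in $y$ and using the boundary condition in \eqref{eq:1} (which forces $y^aG\to0$ as $y\to0^+$) gives $y^a G=-\int_0^y s^a R\,ds$, hence, since $A_{N+1,N+1}\ge\lambda>0$ is smooth,
\[
\partial_y u=\frac{1}{A_{N+1,N+1}}\Big(-\frac{1}{y^a}\int_0^y s^a R(x,s,t)\,ds-\sum_{i=1}^N A_{N+1,i}\,\partial_{x_i}u-F_{N+1}\Big).
\]
Lemma \ref{lemma-2.4-TTV} (the statement on $\psi$, applied with $g=R$) yields $\partial_y\big(y^{-a}\int_0^y s^a R\big)\in C^{0,\alpha}_p(Q_r^+)$; differentiating the last identity once more in $y$ and using the previous step for $\partial_{x_i}u,\partial_{x_iy}u\in C^{0,\alpha}_p$ together with the smoothness of $A$ and $F$, we conclude $\partial_{yy}u\in C^{0,\alpha}_p(Q_r^+)$. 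Collecting everything, $u\in C^{2,\alpha}_p(Q_r^+)$.

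\emph{Main obstacle.} The delicate point is the last step: unlike $x_i$ and $t$, differentiating in $y$ does not yield a solution of the same class, so the pure $y$‑regularity must be extracted from the equation itself. The ODE reformulation plus Lemma \ref{lemma-2.4-TTV} handles $\partial_{yy}u$, while the time‑Hölder bound for $\partial_y u$ is obtained indirectly, through the equation for $\partial_t u$. One must also check that no circularity arises: the function $R$ in the ODE does involve $\partial_y u$ and $\partial_{x_iy}u$, but only via their $C^{0,\alpha}_p$‑norms, which are already controlled in the bootstrap, and not via their time‑moduli, which are exactly the quantities being estimated.
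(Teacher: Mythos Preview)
Your proof is correct and follows essentially the same strategy as the paper: apply Theorem \ref{teo-C^1,alpha} to $\partial_{x_i}u$ and $\partial_t u$ via Lemmas \ref{lemma:derivative:i:solution} and \ref{lemma:derivative:t:solution}, then read the equation as a first-order ODE in $y$ for $G=(A\nabla u+F)\cdot e_{N+1}$, integrate using the conormal condition, and invoke Lemma \ref{lemma-2.4-TTV} to conclude $\partial_y u\in C^{1,\alpha}_p$. The only cosmetic differences are that the paper packages the $y$-step as showing $\psi:=G\in C^{1,\alpha}_p$ directly (rather than treating $\partial_{yy}u$ and the time-modulus of $\partial_y u$ separately), and it does not bother with the full induction on $|\beta|+2j$, since only the first step is needed.
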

\begin{proof} We fix $0<r<r'<1$. 
For every $i=1,\dots,N$, by the regularity assumption on $A$, $f$ and $F$ and Lemma \ref{lemma:derivative:i:solution}, we have that $\partial_{x_i}u$ solves \eqref{eq:solution.i.derivative} in $Q_{r'}^+$ and, by Theorem \ref{teo-C^1,alpha}, we deduce that $\partial_{x_i}u\in C^{1,\alpha}_p(Q_r^+)$.
Analogously, by Lemma \ref{lemma:derivative:t:solution}, $\partial_{t}u$ solves \eqref{eq:solution.t.derivative} in $Q_{r'}^+$ and, by Theorem \ref{teo-C^1,alpha}, we deduce that $\partial_{t}u\in C^{1,\alpha}_p(Q_r^+)$. 
To conclude, we need to prove that $\partial_{y}u\in C^{1,\alpha}_p(Q_r^+)$.

Using the regularity of $\nabla u$ and $\partial_tu$ obtained above, we may rewrite the equation of $u$ as 
\begin{equation}\label{eq:TTV:y}
\partial_y\big(y^a (A\nabla u+F )\big)\cdot e_{N+1} = y^a \Big[ \partial_t u-f-\sum_{i=1}^{N}\partial_{x_i}((A\D u + F)\cdot e_i) \Big] := y^a g,
\end{equation}
in the weak sense, where $g\in C^{0,\alpha}_p(Q_r^+)$. Then, integrating in $y$ and using that $\lim_{y\to0^+}(A\nabla u+F)\cdot e_{N+1}=0$ (see Theorem \ref{teo-C^1,alpha}), one has
\begin{equation}\label{eq:TTV:2}
\psi(x,y,t):=(A\nabla u+F)\cdot{e_{N+1}}(x,y,t)=\frac{1}{y^a}\int_0^y s^ag(x,s,t)ds.
\end{equation}
Since $\partial_{x_i} u, \partial_t u \in C^{1,\alpha}_p(Q_r^+)$, we have $\partial_{x_i}\psi \in C^{0,\alpha}_p(Q_r^+)$ by definition, for every $i=1,\dots,N$, and $\partial_{t} \psi \in C^{0,\alpha}_p(Q_r^+)$. Consequently, $\psi\in C^{0,\frac{1+\alpha}{2}}_t(Q_r^+)$. Now, since $g\in C^{0,\alpha}_p(Q_r^+)$, Lemma \ref{lemma-2.4-TTV} yields $\partial_y \psi \in C^{0,\alpha}_p(Q_r^+)$ and thus $\psi \in C^{1,\alpha}_p(Q_r^+)$. Noticing that, by \eqref{eq:UnifEll}, we have
\begin{equation}\label{eq:ReopyU}
\partial_y u = \frac{\psi-\sum_{j=1}^N A_{N+1,j}\partial_j u-F_{N+1}}{A_{N+1,N+1}},
\end{equation}
it follows $\partial_y u \in C^{1,\alpha}_p(Q_r^+)$ and thus $u \in C^{2,\alpha}_p(Q_r^+)$.
\end{proof}
We are now ready to show Theorem \ref{teo1-C2}.
\begin{proof}[Proof of Theorem \ref{teo1-C2}]
Let us fix $0<r<R<1$ and let $u$ be a weak solution to \eqref{eq:1}. Let us consider a smooth cut-off function $\xi\in C_c^\infty(B_R)$, such that $0\le\xi\le 1$ and $\xi = 1$ in $B_{r}.$ Then, $v:=\xi u$ is a weak solution to
\begin{equation}\label{eq:xi:u:app}
\begin{cases}
y^a\partial_t v-\div (y^a A\nabla v)=y^a(\xi{f}-F\cdot\nabla\xi-A\nabla u\cdot\nabla\xi)+\div (y^a(\xi {F}-uA\nabla\xi)),&\text{ in }Q_R^+\\
\displaystyle{\lim_{y\to0^+}}y^a(A\nabla v+\xi {F}-uA\nabla\xi)\cdot e_{N+1}=0 & \text{ on }\partial^0Q_R^+\\
v=0 & \text{ on }\partial B_R^+\times I_R\\
v=\eta u & \text { on } B_R^+\times\{-R^2\}.
\end{cases}
\end{equation}
Let us denote with $\bar{A}$, $\bar{f}$ and $\bar{F}$ the even extensions of $A$, $f$ and $F$ w.r.t. $y$, respectively and let $A_\e:=\tilde{A}*\rho_\e$, $f_\e:=\tilde{f}*\rho_\e$ and $F_\e:=\tilde{F}*\rho_\e$, where $\{\rho_\e\}_{\e > 0}$ is a family of smooth mollifiers. Then, up to choose $\e$ small enough, $A_\e, f_\e, F_\e \in C_c^\infty(Q^+_R)$ and $A_\e$ satiafies \eqref{eq:UnifEll}. For every $\e\in(0,1)$, let $v_\e$ be the weak solution to 
\[
\begin{cases}
y^a\partial_t v_\e-\div (y^a A_\e\nabla v_\e)=y^a(\xi{f_\e}-F_\e \cdot\nabla\xi-A_\e\nabla u\cdot\nabla\xi)+\div (y^a(\xi {F}_\e - uA_\e\nabla\xi)),&\text{ in }Q_R^+\\
\displaystyle{\lim_{y\to0^+}}y^a(A_\e\nabla v_\e+\xi {F}_\e-uA_\e\nabla\xi)\cdot e_{N+1}=0 & \text{ on }\partial^0Q_R^+\\
v_\e=0 & \text{ on }\partial B_R^+\times I_R \\
v_\e= v & \text { on } B_R^+\times\{-R^2\}.
\end{cases}
\]
By the same compactness argument of Lemma \ref{lemma:derivative:t:solution} (or, equivalently, \cite{AudFioVit24}*{Lemma 4.3, Remark 4.4}), and by the classical theory of the Cauchy-Dirichlet problem in abstract Hilbert spaces, see \cite{lions}, we have that $v_\e\to v$ in $L^2(Q_R^+,y^a)$, which implies that $v_\e\to u$ in $L^2(Q_{r}^+,y^a)$ by the definition of $v$. On the other hand, since $\xi\equiv1$ in $B_r$, one has that $v_\e$ is a weak solution to 
\[
\begin{cases}
y^a\partial_t v_\e-\div (y^a D_\e\nabla v_\e)=y^a f_\e+\div (y^a {F}_\e) &\text{ in }Q_{r}^+,\\
\displaystyle{\lim_{y\to0^+}}y^a(D_\e\nabla v_\e+ {F}_\e)\cdot e_{N+1}=0 & \text{ on }\partial^0Q_{r}^+.
\end{cases}
\]
So, up to rescaling, Lemma \ref{lemma:approx:c2} yields that $v_\e \in C^{2,\alpha}_p(Q_{1}^+)$. 

On the other hand, by Proposition \ref{teo-C^2,alpha}, we deduce that $v_\e$ satisfies the desired estimate \eqref{eq:c2} in $Q_r^+$, uniformly in $\e > 0$. By the Arzel\`a-Ascoli theorem, we may thus take the limit as $\e\to0^+$ and complete the proof of \eqref{eq:c2}.
\end{proof}

%
%%%%%%%%%%%%%%%%%%%%%%%%%%%%%%%%%%%%%%%%%%%%%%%%%%%%%%%%%%%%%%%%%%%%%%%%%%%%%%%%%%%%%%%%%%%%%%%%%%%%%%%%%%%%%%%%%%%%%%%%%%%%%%%%%%%%%%%%%%%%%%%%%%%%%%%%%%%%%%%%%%%%%%%%%%%%%%%%%%%%%%%%%%%%%%%%%%%%%%%%%%%%%%%%
%
\section{$C^{k+2,\alpha}_p$ regularity}\label{section:c^k+2}

In this section, we prove Theorem \ref{teo1} for any $k\geq1$ by combining some a priori estimates and an approximation argument. As anticipated in the introduction, we first deal with the case of a zero forcing term in the equation \eqref{eq:1}, i.e. $f= 0$. In this case, the main result follows by a simple iteration of the $C^{1,\alpha}_p$ and $C^{2,\alpha}_p$ estimates on partial derivatives. Secondly, we treat forcing terms $f\in C^{k,\alpha}_p$. In this case, the strategy is more involved and requires some additional and delicate steps (see Lemma \ref{approximation C^3}).

\subsection{Higher order Schauder estimates when $f=0$} We begin by treating the simpler case $f= 0$. 
\begin{proof}[Proof of Theorem \ref{teo1} when $f=0$.]
We proceed by induction. The initial step $k=0$ follows by Theorem \ref{teo1-C2}.

Let us fix $0<r<r'<1$ and assume that $A,F\in C^{j+2,\alpha}_p(Q_{r}^+)$ imply that \eqref{eq:ck:1} holds for $j=0,\dots,k$ and prove it for $k+1$. By Lemma \ref{lemma:derivative:i:solution} and the induction step we may differentiate the equation of $u$ w.r.t. $x_i$ to obtain $\partial_{x_i} u\in C^{k+2,\alpha}_p(Q_r^+)$ for every $i=1,\dots,N$ and 
\begin{align}\label{eq:f=0:i}
        \begin{aligned}
            \| \partial_{x_i}u\|_{C^{k+2,\alpha}_p(Q_{r}^+)}\le C\big(
    \|\partial_{x_i} u\|_{L^2(Q_{r'}^+,y^a)}+
    \|\partial_i F\|_{C^{k+1,\alpha}_p(Q_1^+)}
    \big)
    \le C\big(\|u\|_{L^2(Q_1^+,y^a)}+
    \|F\|_{C^{k+2,\alpha}_p(Q_1^+)}\big),
        \end{aligned}
    \end{align}
    for some $C>0$ which depends on $N$, $a$, $\lambda$, $\Lambda$, $r$, $\alpha$ and $\|A\|_{C^{k+2,\alpha}_p(Q_1^+)}$. 
On the other hand, By Lemma \ref{lemma:derivative:t:solution} and the induction step (noticing that in the case $k=0$ we use Theorem \ref{teo-C^1,alpha}) we may differentiate the equation of $u$ w.r.t. $t$ to obtain $\partial_{t} u\in C^{k+1,\alpha}_p(Q_r^+)$ and 
\begin{align}\label{eq:f=0:t}
        \begin{aligned}
            \|\partial_{t}u\|_{C^{k+1,\alpha}_p(Q_{r}^+)}\le C\big(
    \|\partial_{t}u\|_{L^2(Q_{r'}^+,y^a)}+
    \|\partial_t F\|_{C^{k,\alpha}_p(Q_1^+)}
    \big)
    \le C\big(\|u\|_{L^2(Q_1^+,y^a)}+
    \|F\|_{C^{k+2,\alpha}_p(Q_1^+)}\big),
        \end{aligned}
    \end{align}
    $N$, $a$, $\lambda$, $\Lambda$, $r$, $\alpha$ and $\|A\|_{C^{k+2,\alpha}_p(Q_1^+)}$.
 Repeating exactly the same argument of Lemma \ref{lemma:approx:c2} we obtain that the function $g$ defined in \eqref{eq:TTV:y} belongs to $C^{k+1,\alpha}_p(Q_r^+)$ and thus $\partial_y u \in C^{k+2,\alpha}_p(Q_r^+)$ which, in turn, implies $u\in C^{k+3,\alpha}_p(Q_r^+)$. Moreover, by using \eqref{eq:ReopyU}, \eqref{eq:f=0:i}, \eqref{eq:f=0:t} one has
 \begin{align}\label{eq:f=0:y}
        \begin{aligned}
            \|\partial_{y}u\|_{C^{k+2,\alpha}_p(Q_{r}^+)}
    \le C\big(\|u\|_{L^2(Q_1^+,y^a)}+
    \|F\|_{C^{k+2,\alpha}_p(Q_1^+)}\big),
        \end{aligned}
    \end{align}
    $N$, $a$, $\lambda$, $\Lambda$, $r$, $\alpha$ and $\|A\|_{C^{k+2,\alpha}_p(Q_1^+)}$.
Then, combining \eqref{eq:f=0:i}, \eqref{eq:f=0:t} and \eqref{eq:f=0:y} our statement follows.
\end{proof}
%%%%%%%%%%%%%%%%%%%%%%%%%%%%%%%%%%%%%%%%%%%%%%%%%%%%%%%%%%%%%%%%%%%%%%%%%%%%%%%%%%%%%%%%%%%%%%%%%%%%%%%%%%%%%%%%%%%%%%%%%%%%%%%%%%%%%%%%%%%%%%%%%%%%%%%%%%%%%%%%%%%%%%%%%%%%%%%%%%%%%%%%%%%%%%%%%%%%%
%
%
\subsection{Higher order Schauder estimates} Now, we consider the case $f\in C^{k,\alpha}_p$. As remarked in the introduction, when $k=1$, we can not use the same argument of the case $f=0$, since the function $\partial_t f$ is not well defined. In order to overcome this problem, we prove a priori $C^{3,\alpha}_p$-estimates and combining these with Lemma \ref{lemma:infinity-regularity} and Lemma \ref{approximation C^3}, we obtain our statement in the case $k=1$. For the general case $k\geq2$, one could possibly iterate the estimates obtained to prove the main result, as done in the case $f=0$. However, in order to keep the presentation uniform, we choose to iterate the full procedure (a priori estimates plus approximation) at any step.
\begin{pro}\label{teo-C^k,alpha}
      Let $N\ge1$, $a>-1$, $\alpha \in (0,1)$, $r\in(0,1)$ and $k \in\N$. Let $A\in C^{k+1,\alpha}_p(Q_1^+)$ satisfying \eqref{eq:UnifEll}, $f\in C^{k,\alpha}_p(Q_1^+)$ and $F\in C^{k+1,\alpha}_p(Q_1^+)$ and let $u\in C^{k+2,\alpha}_p(Q_1^+)$ be a weak solution to \eqref{eq:1}. 
      Then, there exists $C>0$, depending on $N$, $a$, $\lambda$, $\Lambda$, $r$, $\alpha$ and $\|A\|_{C^{k+1,\alpha}_p(Q_1^+)}$ such that
\begin{equation}\label{estimate C^k,alpha}
    \|u\|_{C^{k+2,\alpha}_p(Q_r^+)}\le C\left(
    \|u\|_{L^2(Q_1^+,y^a)}+
    \|f\|_{C^{k,\alpha}_p(Q_1^+)}+
    \|F\|_{C^{k+1,\alpha}_p(Q_1^+)}
    \right).
\end{equation}
\end{pro}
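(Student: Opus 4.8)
The plan is to prove the a priori estimate \eqref{estimate C^k,alpha} by induction on $k$, the base case $k=0$ being Proposition \ref{teo-C^2,alpha}. Assume the statement holds up to some $k-1\ge 0$ and let $u\in C^{k+2,\alpha}_p(Q_1^+)$ solve \eqref{eq:1} with $A\in C^{k+1,\alpha}_p$, $f\in C^{k,\alpha}_p$, $F\in C^{k+1,\alpha}_p$. First I would handle the tangential and time directions: by Lemma \ref{lemma:derivative:i:solution}, each $\partial_{x_i}u$ ($i=1,\dots,N$) is a weak solution of an equation of the same type \eqref{eq:1} with forcing term $\partial_{x_i}f\in C^{k-1,\alpha}_p$, coefficient $A\in C^{k,\alpha}_p$ and flux $\partial_{x_i}A\,\nabla u+\partial_{x_i}F\in C^{k,\alpha}_p$ (using $u\in C^{k+1,\alpha}_p$, which is available from the $k-1$ step or directly). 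Since $u\in C^{k+2,\alpha}_p(Q_1^+)$ by hypothesis, $\partial_{x_i}u\in C^{k+1,\alpha}_p$ and the inductive estimate applies to $\partial_{x_i}u$, giving control of $\|\partial_{x_i}u\|_{C^{k+1,\alpha}_p(Q_r^+)}$ by the right-hand side of \eqref{estimate C^k,alpha}. Similarly, Lemma \ref{lemma:derivative:t:solution} applied to $v=\partial_t u$ (whose equation has forcing $\partial_t f\in C^{k-1,\alpha}_p$ and flux $\partial_t A\,\nabla u+\partial_t F\in C^{k-1,\alpha}_p$) together with the inductive hypothesis at order $k-1$ gives $\partial_t u\in C^{k,\alpha}_p(Q_r^+)$ with the desired bound.

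The remaining task is to upgrade the regularity of $\partial_y u$. Following the scheme of Lemma \ref{lemma:approx:c2} and the $f=0$ case, I would rewrite the equation, using the regularity of $\nabla_x u$, $\partial_t u$ and $f$ just obtained, in the divergence-in-$y$ form
\[
\partial_y\big(y^a(A\nabla u+F)\cdot e_{N+1}\big)=y^a g,\qquad g:=\partial_t u-f-\sum_{i=1}^N\partial_{x_i}\big((A\nabla u+F)\cdot e_i\big),
\]
so that, integrating in $y$ and using the conormal condition $\lim_{y\to0^+}(A\nabla u+F)\cdot e_{N+1}=0$ from Theorem \ref{teo-C^1,alpha},
\[
\psi(x,y,t):=(A\nabla u+F)\cdot e_{N+1}=\frac{1}{y^a}\int_0^y s^a g(x,s,t)\,ds.
\]
The key point is that $g\in C^{k,\alpha}_p(Q_r^+)$: the terms $\partial_t u$ and $f$ are in $C^{k,\alpha}_p$, and $\partial_{x_i}((A\nabla u+F)\cdot e_i)$ involves $\partial_{x_i}A,\partial_{x_i}F\in C^{k,\alpha}_p$, $\nabla u$ and $\partial_{x_i}\nabla u$ — here I need $\partial_{x_i}u\in C^{k+1,\alpha}_p$, which is exactly what the first paragraph provided. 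Then Lemma \ref{lemma-2.4-TTV} gives $\partial_y\psi\in C^{k,\alpha}_p(Q_r^+)$, while the $x$- and $t$-derivatives of $\psi$ are in $C^{k,\alpha}_p$ by definition of $\psi$ and the regularity of $\nabla u,\partial_t u$, so altogether $\psi\in C^{k+1,\alpha}_p(Q_r^+)$. Solving \eqref{eq:ReopyU}, i.e.
\[
\partial_y u=\frac{\psi-\sum_{j=1}^N A_{N+1,j}\partial_j u-F_{N+1}}{A_{N+1,N+1}},
\]
and using $A\in C^{k+1,\alpha}_p$, $A_{N+1,N+1}\ge\lambda>0$, $\partial_j u\in C^{k+1,\alpha}_p$ and $F_{N+1}\in C^{k+1,\alpha}_p$, we conclude $\partial_y u\in C^{k+1,\alpha}_p(Q_r^+)$, with the corresponding quantitative bound obtained by tracking the constants through Lemmas \ref{lemma-2.3-TTV}--\ref{lemma-2.4-TTV} and the interpolation inequalities \eqref{lemma:interpolation} (to absorb lower-order norms of $u$ into $\|u\|_{L^2(Q_1^+,y^a)}$ plus the data).

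Combining the bounds for $\partial_{x_i}u$ ($i=1,\dots,N$), $\partial_t u$ and $\partial_y u$ in $C^{k+1,\alpha}_p(Q_r^+)$, together with the $C^{k+2,\alpha}_p$ control already available at order $k-1$ for $u$ itself on a slightly larger cylinder, yields $u\in C^{k+2,\alpha}_p(Q_r^+)$ and the estimate \eqref{estimate C^k,alpha}; as usual one first proves it on a pair of nested cylinders $Q_r^+\subset Q_{r'}^+$ and then uses a covering/interpolation argument to reach arbitrary $r\in(0,1)$. I expect the main obstacle to be bookkeeping rather than conceptual: one must be careful that at each stage the forcing terms and coefficients of the differentiated equations lie in the correct Hölder class (the orders drop by one for $\partial_t$-derivatives but not for $\partial_{x_i}$-derivatives because of the extra $\nabla u$ factor), and that the $C_t^{(1+\alpha)/2}$ seminorms of mixed derivatives appearing in the definition of $C^{k+2,\alpha}_p$ are genuinely controlled — this last point is where the delicate analysis of the weighted second-order $y$-derivative via Lemma \ref{lemma-2.4-TTV} is essential, since $\partial_{yy}u$ is not directly a solution of an equation of the class \eqref{eq:1}.
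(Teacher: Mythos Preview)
Your inductive scheme has a genuine gap, and it sits precisely at the step where you differentiate in $t$. In parabolic H\"older spaces $\partial_t$ costs \emph{two} orders, not one: if $f\in C^{k,\alpha}_p$ then $\partial_t f\in C^{k-2,\alpha}_p$, and at $k=1$ the derivative $\partial_t f$ need not even exist as a function (membership in $C^{1,\alpha}_p$ only gives $(1+\alpha)/2$-H\"older continuity in $t$). Thus your claim ``$\partial_t f\in C^{k-1,\alpha}_p$'' is false, Lemma~\ref{lemma:derivative:t:solution} does not apply when $k=1$, and no inductive estimate for $\partial_t u$ is available. This in turn blocks the $\psi$-argument, because $g=\partial_t u-f-\sum_{i=1}^N\partial_{x_i}((A\nabla u+F)\cdot e_i)$ cannot be \emph{estimated} in $C^{k,\alpha}_p$ in terms of the data (even though a priori it lies there). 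What you have written is exactly the argument that works when $f\equiv 0$ (see the first half of Section~\ref{section:c^k+2}); for general $f\in C^{k,\alpha}_p$ it already fails at the first inductive step.

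The underlying circularity is this: reading $\partial_t u$ off the equation requires $\partial_{yy}u$, while your route to $\partial_{yy}u$ via $\psi$ requires $\partial_t u$. The paper breaks the circle without ever differentiating in $t$. After controlling $\partial_{x_i}u$ by induction, it passes to the diagonal part $D=\mathrm{diag}(A)$ and shows (Lemma~\ref{approximation C^3}) that the weighted second $y$-derivative
\[
w:=y^{-a}\partial_y\Big(y^a\Big(\partial_y u+\frac{g}{\mu}\Big)\Big),\qquad \mu=A_{N+1,N+1},
\]
is itself a weak solution of an equation of the class \eqref{eq:1} with \emph{no} forcing term --- only a flux $\tilde F\in C^{k,\alpha}_p$, into which $\partial_y f$ (a single spatial derivative, costing one order) has been absorbed. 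The inductive hypothesis (or Theorem~\ref{teo-C^1,alpha} at the bottom step) then controls $w$; Lemma~\ref{lemma-2.4-TTV} recovers $\partial_{yy}u$; and only \emph{afterwards} is $\partial_t u=y^{-a}\mathrm{div}(y^a(A\nabla u+F))+f$ read off directly from the equation. Your $\psi$-integral idea is right in spirit, but it has to be pushed to a second-order-in-$y$ construction (and coupled with the diagonal reduction) before the time regularity becomes accessible.
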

The proof of Proposition \ref{teo-C^k,alpha} crucially uses Lemma \ref{approximation C^3} below. In turn, in the proof of Lemma \ref{approximation C^3}, we exploit an approximation argument which relies on the following auxiliary result.
\begin{lem}\label{lemma:infinity-regularity}
Let $N\ge1$, $a>-1$, $r\in(0,1)$, $k\in\N$. Let $A \in C^{k+2,\alpha}_p(Q_1^+)$ satisfying \eqref{eq:UnifEll}, $f\in C^\infty(Q_1^+)$, $F\in C^{k+2,\alpha}_p(Q_1^+)$, and let $u$ be a weak solution to \eqref{eq:1}. Then $u\in C^{k+3,\alpha}_p(Q_{r}^+)$. 
\end{lem}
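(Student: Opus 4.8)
The plan is to argue by induction on $k$, recovering the one missing order of regularity separately in the $x$, $t$ and $y$ variables, by three distinct mechanisms. Since $f\in C^\infty$, differentiating the equation keeps the forcing smooth, so the only genuine loss of regularity comes from the terms $\partial A\,\nabla u$; because $\partial_t$ costs two orders of parabolic regularity while $\partial_{x_i}$ costs one, the $t$-direction can only be closed by invoking the statement at level $k-2$, and consequently the induction requires the two base cases $k=0$ and $k=1$. These base cases are obtained by a direct adaptation of the proof of Lemma \ref{lemma:approx:c2}: by Theorem \ref{teo1-C2} one first gets $u\in C^{2,\alpha}_p$ (resp. $u\in C^{3,\alpha}_p$ for $k=1$, by the $k=0$ statement just proved); by Lemma \ref{lemma:derivative:i:solution} each $\partial_{x_i}u$ solves an equation of the form \eqref{eq:1} whose forcing lies in $C^{1,\alpha}_p$ (resp. $C^{2,\alpha}_p$), so Theorem \ref{teo1-C2} applies to it; by Lemma \ref{lemma:derivative:t:solution} together with Theorem \ref{teo-C^1,alpha} (resp. Theorem \ref{teo1-C2}) one gets $\partial_t u\in C^{1,\alpha}_p$ (resp. $C^{2,\alpha}_p$); finally, rewriting the equation in the form \eqref{eq:TTV:y}, the regularity of $\nabla u$ and $\partial_t u$ just obtained shows $g\in C^{1,\alpha}_p$ (resp. $C^{2,\alpha}_p$), Lemma \ref{lemma-2.4-TTV} upgrades $\psi=(A\nabla u+F)\cdot e_{N+1}$ as in \eqref{eq:TTV:2} to $C^{2,\alpha}_p$ (resp. $C^{3,\alpha}_p$), and \eqref{eq:ReopyU} gives $\partial_y u$ of the same class.

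For the inductive step one assumes the lemma at all levels $\le k-1$ and takes $A,F\in C^{k+2,\alpha}_p$, $f\in C^\infty$. Applying the level-$(k-1)$ statement to $u$ itself gives $u\in C^{k+2,\alpha}_p$ on a slightly smaller cylinder. Then: (a) each $\partial_{x_i}u$ ($i=1,\dots,N$) solves \eqref{eq:solution.i.derivative}, with coefficient $A\in C^{k+2,\alpha}_p$ and forcing $\partial_{x_i}f\in C^\infty$, $\partial_{x_i}A\,\nabla u+\partial_{x_i}F\in C^{k+1,\alpha}_p$, so the level-$(k-1)$ statement yields $\partial_{x_i}u\in C^{k+2,\alpha}_p$; (b) $\partial_t u$ solves \eqref{eq:solution.t.derivative}, and since $\partial_t A\in C^{k,\alpha}_p$ the forcing $\partial_t A\,\nabla u+\partial_t F$ lies only in $C^{k,\alpha}_p$, so the level-$(k-2)$ statement yields $\partial_t u\in C^{k+1,\alpha}_p$; (c) as in the base cases, $g$ in \eqref{eq:TTV:y} now belongs to $C^{k+1,\alpha}_p$ — it is built from $\partial_t u\in C^{k+1,\alpha}_p$, $f\in C^\infty$, and $\partial_{x_i}$-derivatives of $A\nabla u+F$, which are controlled by $\partial_{x_i}u\in C^{k+2,\alpha}_p$, $\partial_y u\in C^{k+1,\alpha}_p$ and $A,F\in C^{k+2,\alpha}_p$ — whence Lemma \ref{lemma-2.4-TTV} gives $\partial_y\psi\in C^{k+1,\alpha}_p$, so $\psi\in C^{k+2,\alpha}_p$ and, by \eqref{eq:ReopyU}, $\partial_y u\in C^{k+2,\alpha}_p$. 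Collecting $\partial_{x_i}u\in C^{k+2,\alpha}_p$, $\partial_y u\in C^{k+2,\alpha}_p$ and $\partial_t u\in C^{k+1,\alpha}_p$ gives $u\in C^{k+3,\alpha}_p(Q_r^+)$, since every derivative occurring in the $C^{k+3,\alpha}_p$ norm is a derivative of one of these three functions; the finitely many successive radius reductions are absorbed in the usual way.

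The main obstacle is precisely the different parabolic weights of the three variable groups: one cannot simply iterate ``differentiate and apply the previous step'' uniformly, because differentiation in $t$ drops two orders, which forces the two-step look-back in the induction and hence two base cases. The other point requiring care is that $\partial_y u$ must never be obtained by differentiation but always reconstructed from the equation via the representation $\psi=y^{-a}\int_0^y s^a g\,ds$, so that the regularity of $\psi$ is governed by that of $g$, which in turn rests on the $x$- and $t$-derivative estimates established first; here one also uses freely that $C^{m,\alpha}_p$ is an algebra stable under division by functions bounded away from zero (in particular by $A_{N+1,N+1}$).
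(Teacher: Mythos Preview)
Your proof is correct and follows essentially the same approach as the paper, which merely says to ``slightly modify the arguments of the proof of Theorem~\ref{teo1} in the case $f=0$''; that argument is precisely the induction you describe (differentiate in $x_i$, differentiate in $t$, then reconstruct $\partial_y u$ via \eqref{eq:TTV:y}--\eqref{eq:ReopyU} and Lemma~\ref{lemma-2.4-TTV}). The only cosmetic difference is that the paper handles the two-step look-back for $\partial_t u$ by invoking Theorem~\ref{teo-C^1,alpha} as a ``level $-1$'' substitute rather than writing out a separate base case $k=1$.
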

\begin{proof} It is enough to slightly modify the arguments of the proof of Theorem \ref{teo1} in the case $f=0$.
\end{proof}
\begin{lem}\label{approximation C^3}
    Let $N\ge1$, $a>-1$, $\alpha\in(0,1)$ and $k\in\N$. Let $D\in  C^{k+2,\alpha}_p(Q_1^+)$ be a diagonal matrix satisfying \eqref{eq:UnifEll}, $f\in C^{k+1,\alpha}_p(Q_1^+)$ and $F\in C^{k+2,\alpha}_p(Q_1^+)$. Let $\mu:=D_{N+1,N+1}$ and $g:=F_{N+1}$. Let $u\in C^{k+3,\alpha}_p(Q_1^+)$ be a weak solution to
    \begin{equation}\label{eq:u:C3}
    \begin{cases}
        y^{a}\partial_t u-{\div }(y^{a} D\nabla u)=y^{a}f+{\div}(y^{a} F) &{\rm in  }\hspace{0.1cm}Q_1^+,\\
       \displaystyle{\lim_{y\to0^+}}y^a(\mu\partial_y u+g)=0&{\rm on  }\hspace{0.1cm}\partial^0Q_1^+.
    \end{cases}
    \end{equation}
Then, the function $$w:=y^{-a}\partial_y\Big(y^a \Big(\partial_y u+\frac{g}{\mu}\Big)\Big) \in C^{k+1,\alpha}_p(Q_1^+),$$
is a weak solution to
\begin{equation}\label{eq:yy:solution}
\begin{cases}
        y^a\partial_t w-{\div }(y^{a} D\nabla w)={\div}(y^{a} \Tilde{F}) &{\rm in  }\hspace{0.1cm}Q_r^+,\\
        \displaystyle{\lim_{y\to0^+}}y^a\big(D\nabla w+\Tilde{F}\big)\cdot e_{N+1}=0&{\rm on  }\hspace{0.1cm}\partial^0Q_r^+,
    \end{cases}
\end{equation}
    where 
    \begin{align}\label{eq:TildeFDef}
        \begin{aligned}
        \Tilde{F}:=\partial_y D\nabla\Big(\partial_y u+\frac{g}{\mu}
    \Big)+\Big[\tilde{f}+\partial_y\mu\frac{a}{y}\Big(\partial_yu+\frac{g}{\mu}
    \Big)\Big]e_{N+1},
        \end{aligned}
    \end{align}
        and
        \begin{equation}\label{eq:HATF:C3-}
        \tilde{f}:=\partial_y f+\partial_y\div g+\div(\partial_y D \nabla u)+\partial_t\left(\frac{g}{\mu}\right)-\div\Big(D\nabla\Big(\frac{g}{\mu}\Big)\Big).
        \end{equation}
Moreover, 
\begin{equation}\label{eq:F:tilde:estimate}
\|\Tilde{F}\|_{C^{k,\alpha}_p(Q_r^+)}\le C\big(
\|f\|_{C^{k+1,\alpha}_p(Q_1^+)}+\|F\|_{C^{k+2,\alpha}_p(Q_1^+)}+\|u\|_{C^{k+2,\alpha}_p(Q_r^+)}
\big),
\end{equation}
for some $C>0$ depending only on $N$, $a$, $\lambda$, $\Lambda$, $r$, $\alpha$ and $\|A\|_{C^{k+2,\alpha}_p(Q_1^+)}$.
\end{lem}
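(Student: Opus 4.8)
The plan is to combine an explicit algebraic expression of $w$ in terms of $u$ and the data, the technical results proved above (in particular Lemmas~\ref{lemma-2.3-TTV} and~\ref{lemma:infinity-regularity}), and a smoothing argument. Throughout set $v:=\partial_y u+\frac{g}{\mu}$ and $\hat g:=\partial_t u-f-\sum_{i=1}^N\partial_{x_i}(D_{ii}\partial_{x_i}u+F_i)$. The first step is the regularity of $w$ and the bound \eqref{eq:F:tilde:estimate}: since $u\in C^{k+3,\alpha}_p(Q_1^+)$, $g=F_{N+1}\in C^{k+2,\alpha}_p(Q_1^+)$ and $\mu=D_{N+1,N+1}\in C^{k+2,\alpha}_p(Q_1^+)$ with $\mu\ge\lambda$, one has $v\in C^{k+2,\alpha}_p(Q_1^+)$; by Theorem~\ref{teo-C^1,alpha} (applied with $A=D$) the pointwise conormal condition $\mu\partial_y u+g=\mu v=0$ holds on $\partial^0Q_1^+$, so $v(x,0,t)\equiv0$ and Lemma~\ref{lemma-2.3-TTV} gives $v/y\in C^{k+1,\alpha}_p(Q_1^+)$ with $[v/y]_{C^{k+1,\alpha}_p}\le[v]_{C^{k+2,\alpha}_p}$. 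As $y^{-a}\partial_y(y^a v)=\partial_y v+a\,(v/y)$, this yields $w\in C^{k+1,\alpha}_p(Q_1^+)$; and inspecting \eqref{eq:TildeFDef}--\eqref{eq:HATF:C3-} term by term (every summand being a product of derivatives of $D$, $f$, $F$ of order $\le k+2,k+1,k+2$ and of $u$ of order $\le k+2$, the only apparently singular one, $a\,\partial_y\mu\,(v/y)$, being handled again by Lemma~\ref{lemma-2.3-TTV}) one gets \eqref{eq:F:tilde:estimate}, with only $\|u\|_{C^{k+2,\alpha}_p(Q_r^+)}$ on the right-hand side.

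Next I would prove that $w$ solves \eqref{eq:yy:solution}, first assuming $D$, $f$, $F$ smooth; iterating Lemma~\ref{lemma:infinity-regularity} then makes $u\in C^{k'+3,\alpha}_p(Q_r^+)$ for every $k'$, so all manipulations below are classical up to $\{y=0\}$. Since $D$ is diagonal and $u$ solves \eqref{eq:u:C3} pointwise (Theorem~\ref{teo-C^1,alpha}), the equation reads $\partial_y\big(y^a(\mu\partial_y u+g)\big)=y^a\hat g$ in $Q_r^+$; dividing the flux by $\mu$ and using $\partial_y(y^a\mu v)=\mu\,\partial_y(y^a v)+y^a v\,\partial_y\mu$ gives the pointwise identity $\mu w=\hat g-v\,\partial_y\mu$. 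Differentiating this identity together with the $y$-derivative of \eqref{eq:u:C3}, and using that diagonality of $D$ forbids any cross term of the form $\partial_{x_i}(\mu\,\partial_y\,\cdot\,)$, a direct computation shows that $w$ satisfies the first line of \eqref{eq:yy:solution} classically, with $\tilde F,\tilde f$ exactly as in \eqref{eq:TildeFDef}--\eqref{eq:HATF:C3-}. For the boundary condition one observes, from \eqref{eq:TildeFDef} and diagonality, that $\tilde F\cdot e_{N+1}=\partial_y\mu\,w+\tilde f$, so $(D\nabla w+\tilde F)\cdot e_{N+1}=\partial_y(\mu w)+\tilde f$; inserting $\mu w=\hat g-v\,\partial_y\mu$, cancelling the $\mu$--$(g/\mu)$ contributions by the Leibniz rule $g=\mu\cdot(g/\mu)$, and using the definition of $\hat g$, this collapses to $\partial_t v-\sum_{i=1}^N\partial_{x_i}(D_{ii}\partial_{x_i}v)$ (the tangential parabolic operator acting on $v$), which vanishes on $\partial^0Q_r^+$ since $v(x,0,t)\equiv0$ (hence so do all its tangential derivatives). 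Being $C^1$ in $y$ in the smooth case, this quantity is $O(y)$, whence $y^a(D\nabla w+\tilde F)\cdot e_{N+1}=O(y^{1+a})\to0$ as $y\to0^+$ because $a>-1$; an integration by parts over $\{y>\delta\}\cap Q_r$ followed by $\delta\to0^+$ (the boundary integral vanishing by the above, $y^a$ being locally integrable) shows that $w$ is a weak solution of \eqref{eq:yy:solution} in the smooth case.

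Finally, to remove the smoothness assumption I would mollify, as in the proof of Lemma~\ref{lemma:approx:c2}: after even reflection in $y$, replace $D,f,F$ by smooth $D_\e$ (diagonal, satisfying \eqref{eq:UnifEll} for $\e$ small), $f_\e$, $F_\e$ with uniformly bounded $C^{k+1,\alpha}_p,C^{k+1,\alpha}_p,C^{k+2,\alpha}_p$ norms, let $u_\e$ solve the corresponding problem \eqref{eq:u:C3} with matching lateral and initial data, and let $w_\e$ be defined from $u_\e$, $D_\e$, $F_\e$ by the formula of the statement. By Lemma~\ref{lemma:infinity-regularity}, $u_\e\in C^{k+3,\alpha}_p(Q_r^+)$, so the previous step gives that $w_\e$ is a weak solution of \eqref{eq:yy:solution} with $\tilde F_\e$. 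The level-$0$ a priori estimate (Proposition~\ref{teo-C^2,alpha}) bounds $\|u_\e\|_{C^{2,\alpha}_p(Q_{r'}^+)}$ uniformly, while the energy estimates (Lemmas~\ref{CACCIO} and~\ref{caccioppoli.della.t}, applied also to the derivatives of $u_\e$ via Lemmas~\ref{lemma:derivative:t:solution}--\ref{lemma:derivative:i:solution} and to $w_\e$) give uniform $L^2(Q_r^+,y^a)$ bounds on $w_\e$, $\nabla w_\e$ and $\tilde F_\e$; since moreover $u_\e\to u$ in $L^2(Q_r^+,y^a)$, one obtains $w_\e\to w$, $\tilde F_\e\to\tilde F$ in $L^2(Q_r^+,y^a)$ and $\nabla w_\e\rightharpoonup\nabla w$, so passing to the limit in the weak formulation yields that $w$ is a weak solution of \eqref{eq:yy:solution}, with $\tilde F$ and the estimate \eqref{eq:F:tilde:estimate} as in the statement.

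The step I expect to be the real obstacle is the second one: checking that the variable-coefficient and forcing corrections reassemble \emph{exactly} into \eqref{eq:TildeFDef}--\eqref{eq:HATF:C3-}, and that the conormal flux of $w$ vanishes to first order at $\{y=0\}$ for the whole range $a>-1$. Diagonality of $D$ (which rules out unwanted mixed terms) and the reduction to smooth data via Lemma~\ref{lemma:infinity-regularity} (which upgrades $u$ enough that the relevant quantity is genuinely $O(y)$ rather than merely $O(y^\alpha)$) are exactly what make these cancellations rigorous.
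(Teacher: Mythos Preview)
Your proposal is correct and follows essentially the same three-step strategy as the paper: establish $w\in C^{k+1,\alpha}_p$ and \eqref{eq:F:tilde:estimate} via Lemma~\ref{lemma-2.3-TTV}; derive the equation and boundary condition for $w$ under smooth data (using Lemma~\ref{lemma:infinity-regularity} to upgrade $u$); and recover the general case by mollification. The only cosmetic differences are that the paper organizes the derivation of \eqref{eq:yy:solution} by first writing the conjugate-weight equation for $y^a(\partial_y u+g/\mu)$ (with weight $y^{-a}$ and homogeneous Dirichlet condition) and then differentiating once more in $y$, rather than working directly with your identity $\mu w=\hat g-v\,\partial_y\mu$, and it notes that the approximation step is actually only needed when $k=0$.
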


\begin{proof}

\emph{Step 1.} First, we prove that
$$
w := y^{-a}\partial_y\Big(y^a \Big(\partial_y u+\frac{g}{\mu}\Big)\Big) = \partial_y \Big(\partial_y u+\frac{g}{\mu}\Big)+\frac{a}{y}\Big(\partial_y u+\frac{g}{\mu}\Big)\in C^{k+1,\alpha}_p(Q_1^+).
$$
By \eqref{eq:UnifEll}, we have $\mu\ge \lambda>0$ and so $\partial_y u+\frac{g}{\mu}\in C^{k+2,\alpha}_p(Q_1^+)$, thanks to the regularity assumptions on $\mu, g$ and $\mu$. By Theorem \ref{teo-C^1,alpha}, $u$ satisfies the conormal boundary condition 
    \begin{equation}\label{eq:Conorvck}
    \lim_{y\to0^+}\mu\partial_y u+g=0,
        \end{equation}
    and hence, by Lemma \ref{lemma-2.3-TTV}, we deduce that $\frac{a}{y} (\partial_y u+\frac{g}{\mu})\in C^{k+1,\alpha}_p(Q_1^+)$, which implies that $w\in C^{k+1,\alpha}_p(Q_1^+)$ by definition of $w$.
     
By similar considerations, it follows that $\tilde{f}\in C^{k,\alpha}_p(Q_1^+)$, where $\tilde{f}$ is defined in \eqref{eq:HATF:C3-}. Consequently, $\Tilde{F}\in C^{k,\alpha}_p(Q_1^+)$ (defined in \eqref{eq:TildeFDef}) and \eqref{eq:F:tilde:estimate} directly follows by definition.
   
\smallskip

\emph{Step 2.}
From this point we distinguish two cases as follows. If $k=0$, we assume that $D, f, F \in C^\infty(Q_1^+)$ and thus, by Lemma \ref{lemma:infinity-regularity}, $u \in C^\infty(Q_1^+)$ as well. We will recover our statement under the weaker assumptions $D\in  C^{2,\alpha}_p(Q_1^+)$, $f\in C^{1,\alpha}_p(Q_1^+)$ and $F\in C^{2,\alpha}_p(Q_1^+)$ throughout an approximation argument (see \emph{Step 3}). If $k\ge1$ we such approximation argument is not needed (this is because, when $k \geq 1$, the equation of $w$ is satisfied in the classical sense).

  We may rewrite \eqref{eq:u:C3} as
    \begin{equation}\label{eq.diag.approx}
    \partial_t u -\div(D\nabla u)-\frac{a}{y}(\mu\partial_y u+g)=f+\div F \quad\text{ in }Q_1^+.
    \end{equation}
    Differentiating the above equation w.r.t. $y$, we obtain
  \begin{equation}\label{eq.diag.approx.2}
      \partial_t (\partial_y u)-{\div}(D\nabla( \partial_{y}u))-{\div}(\partial_y D\nabla u)-\partial_y\left( \frac{a}{y}(\mu \partial_y u+g)\right)=\partial_y f+\partial_y \div F \quad\text{ in } Q_1^+.
\end{equation}
 Taking in account \eqref{eq.diag.approx.2} and setting $v:=y^a\left(\partial_y u+\frac{g}{\mu}\right)$, we obtain the equation of $v$
        \begin{align}\label{eq:hatf:c3}
        \begin{aligned}
            y^{-a}\partial_{t}v-\div(y^{-a}D\nabla v)&=\partial_t (\partial_{y}u)+\partial_t\left(\frac{g}{\mu}\right)-\div\left(D\nabla\left(\partial_y u+\frac{g}{\mu}\right)\right)-\partial_y\left(\frac{a}{y}(\mu\partial_y u+g)\right)\\
            &=\partial_t (\partial_{y}u)+\partial_t\left(\frac{g}{\mu}\right)-\div(D\nabla (\partial_{y}u))-\div\left(D\nabla\left(\frac{g}{\mu}\right)\right)-\partial_y\left(\frac{a}{y}(\mu\partial_y u+g)\right)\\
            &=\partial_y f+\partial_y\div\Bar{F}+\div(\partial_y D \nabla u)+\partial_t\left(\frac{g}{\mu}\right)-\div\left(D\nabla\left(\frac{g}{\mu}\right)\right):=\tilde{f} \quad \text{ in } Q_1^+,
            \end{aligned}
        \end{align}
 and thus, recalling that $\mu \geq \lambda > 0$ and \eqref{eq:Conorvck}, $v$ satisfies
\begin{equation}\label{eq.diag.approx.3}
    \begin{cases}
        y^{-a}\partial_t v-\div(y^{-a}D\nabla v)=y^{-a}(y^a\tilde{f})&\text{in }Q_1^+,\\
        v=0 &\text{on }\partial^0 Q_1^+.
    \end{cases}
\end{equation}
Differentiating \eqref{eq.diag.approx.3} w.r.t. $y$, we get
\begin{equation}\label{eq.diag.approx.4}
    \partial_t \partial_y v-{\div}(D\nabla \partial_y v)-{\div}(\partial_y D\nabla v)-\partial_y\left( \frac{a}{y}(\mu \partial_y v)\right)=\partial_y (y^a \tilde{f}) \quad\text{in }Q_1^+.
\end{equation}
Consequently,  $w = y^{-a}\partial_y v$ and satisfies pointwisely in $Q_1^+$
\begin{align*}
    y^a\partial_t w -\div(y^a D\nabla w)&= \partial_t \partial_y v-\div(D\nabla \partial_y v)+\left( \frac{a}{y}(\mu \partial_y v)\right)\\
    &=\partial_y(y^a\tilde{f})+\div(\partial_y D\nabla v)\\
    &=\partial_y(y^a\tilde{f})+\div\left(y^a\partial_y D \nabla\left(\partial_y u+\frac{g}{\mu}
    \right)\right)+\partial_y \left( y^a \partial_y\mu \frac{a}{y}\left( \partial_yu+\frac{g}{\mu}\right)\right).
\end{align*}
We need to establish that $w$ satisfies the boundary condition in \eqref{eq:yy:solution}. By the regularity assumptions and the fact that $v= 0$ on $\{y=0\}$, we can take the limit as $y\to0^+$ in the equation \eqref{eq.diag.approx.3} to get
\[
\lim_{y\to0^+}\Big[\mu \partial_{yy}v-\frac{a}{y}\mu \partial_y v+\partial_y \mu \partial_y v+y^a\tilde{f}
    \Big]=\lim_{y\to0^+}\Big[\partial_t v-\sum_{i=1}^{N}\partial_{x_i}(D_{i,i}\partial_{x_i}v)    \Big]=0,
    \]   
which turns out to be the boundary condition
\begin{align*}
    0=\lim_{y\to0^+}\left[y^a(\mu\partial_y w+\tilde{f})+\partial_y\mu \partial_yv\right] =\lim_{y\to0^+}y^a\left[\mu\partial_y w+\tilde{f}+\partial_y\mu\partial_y\left(\partial_y u+ \frac{g}{\mu}
    \right)+\partial_y \mu\frac{a}{y}\left(\partial_yu+\frac{g}{\mu}
    \right)\right].
\end{align*}
Hence, defining $\Tilde{F}$ as in \eqref{eq:TildeFDef}, it follows that $w$ is solution to \eqref{eq:yy:solution} as claimed.

\smallskip

\emph{Step 3.}
In this final step, we present the approximation argument which allows to complete the proof when $k=0$. First, by Theorem \ref{teo1-C2}, we have that 
\begin{align}\label{eq:estimates:for:compactness:c3}
\begin{aligned}
\|\Tilde{F}\|_{C^{0,\alpha}_p(Q_r^+)} &\le C\big(
\|f\|_{C^{1,\alpha}_p(Q_1^+)}+\|F\|_{C^{2,\alpha}_p(Q_1^+)}+\|u\|_{C^{2,\alpha}_p(Q_r^+)}\big) \\
&\le C\big(
\|f\|_{C^{1,\alpha}_p(Q_1^+)}+\|F\|_{C^{2,\alpha}_p(Q_1^+)}+\|u\|_{L^2(Q_1^+,y^a)}\big),
\end{aligned}
\end{align}
for some $C>0$ depending only on $N$, $a$, $\lambda$, $\Lambda$, $r$, $\alpha$ and $\|D\|_{C^{2,\alpha}_p(Q_1^+)}$.

The proof follows the approximation scheme done in the proof of Theorem \ref{teo1-C2}: it is enough to replace the matrix $A$ with the matrix $D$. Indeed, after regularizing the data (which we call $f_\e,F_\e,A_\e \in C^\infty(Q_1^+)$), and using Lemma \ref{lemma:infinity-regularity}, we can find a family of smooth solutions $v_\e \in C_c^\infty(Q_r^+)$ to
\[
\begin{cases}
y^a\partial_t v_\e-\div (y^a D_\e\nabla v_\e)=y^a f_\e+\div (y^a {F}_\e) &\text{ in }Q_{r}^+,\\
\displaystyle{\lim_{y\to0^+}}y^a(D_\e\nabla v_\e+ {F}_\e)\cdot e_{N+1}=0 & \text{ on }\partial^0Q_{r}^+.
\end{cases}
\]
 which converges to the original solution $u$ as $\e\to0^+$. 
Consequently, \emph{Step 2} yields that
$$
w_\e:=y^{-a}\partial_y\Big(y^a \Big(\partial_y v_\e+\frac{\Bar{f_\e}}{\mu_\e}\Big)\Big)
$$
is a solution to \eqref{eq:yy:solution} (with $D$ and $\tilde{F}$ replaced by $D_\e$ and $\tilde{F}_\e$) and $\tilde{F}_\e$, defined accordingly to \eqref{eq:TildeFDef}, satisfies  \eqref{eq:estimates:for:compactness:c3}. By Proposition \ref{teo-C^2,alpha} and the Arzel\'a-Ascoli theorem, one has that $v_\e \to u$ in $C^{2,\alpha}_p(Q_r^+)$, which implies that $w_\e \to w$ in $C^{0,\alpha}_p(Q_r^+)$. Then a slight modification of the argument in \cite{AudFioVit24}*{Lemma 4.2} shows that $w_\e$ converges to $w$ in the energy spaces and that $w$ is a weak solution to \eqref{eq:yy:solution}, as claimed.
\end{proof}
\begin{proof}[Proof of Proposition \ref{teo-C^k,alpha}]

    Let $\partial_i:=\partial_{x_i}$ for $i=1,\dots,N$. We proceed with an induction argument. The step $k=0$ has been proved in Proposition \ref{teo-C^2,alpha}. Let us assume that \eqref{estimate C^k,alpha} holds for $j=1,\dots,k\in\mathbb{N}$ and let us prove that it is valid for $k+1$. So, let $u\in C^{k+3,\alpha}_p(Q_1^+)$, $A,F\in C^{k+2,\alpha}_p(Q_1^+)$ and $f\in C^{k+1,\alpha}_p(Q_1^+)$.
    
Let us fix $0<r<r'<1$. First, for every $i=1,\dots,N$, by Lemma \ref{lemma:derivative:i:solution}, one has that $u_i:=\partial_{i}u$ solves \eqref{eq:solution.i.derivative} in $Q_{r'}^+$. Noticing that $u_i\in C^{k+2,\alpha}_p(Q_1^+)$, $A\in C^{k+2,\alpha}_p(Q_1^+)$, $\partial_i f\in C^{k,\alpha}_p(Q_1^+)$ and $\partial_i F, \partial_{i}A\nabla u\in C^{k+1,\alpha}_p(Q_1^+)$, we can use the inductive step to obtain 
    \begin{align}\label{eq:C^3:i}
        \begin{aligned}
            \|u_i\|_{C^{k+2,\alpha}_p(Q_{r}^+)}\le& C\left(
    \|u_i\|_{L^2(Q_{r'}^+,y^a)}+
    \|\partial_i f\|_{C^{k,\alpha}_p(Q_1^+)}+
    \|\partial_i F\|_{C^{k+1,\alpha}_p(Q_1^+)}
    \right)\\
    \le& C\left(\|u\|_{L^2(Q_1^+,y^a)}+
    \|f\|_{C^{k+1,\alpha}_p(Q_1^+)}+
    \|F\|_{C^{k+2,\alpha}_p(Q_1^+)}\right),
        \end{aligned}
    \end{align}
    where $C>0$ depends only on $N$, $a$, $\alpha$, $\lambda$, $\Lambda$, $\|A\|_{C^{k+2,\alpha}_p(Q_1^+)}$. It remains to prove that
    \begin{align*}
        &[u_{yyy}]_{C^{k,\alpha}_p(Q_r^+)}+[u_{yy}]_{C^{k,\frac{1+\alpha}{2}}_t(Q_r^+)}+[u_{ty}]_{C^{k,\alpha}_p(Q_r^+)}+[u_{t}]_{C^{k,\frac{1+\alpha}{2}}_t(Q_r^+)}\\
        &\le C\left(\|u\|_{L^2(Q_1^+,y^a)}+
    \|f\|_{C^{k+1,\alpha}_p(Q_1^+)}+
    \|F\|_{C^{k+2,\alpha}_p(Q_1^+)}\right).
    \end{align*}
    Let $D:=\text{diag}(A)$. It is immediate to check that $u$ solves
$$
\begin{cases}
        y^a\partial_t u-{\div}(y^a D\nabla u)=y^a\bar{f}+{\div}(y^a \bar{F})&\text{ in }Q_{r}^+\\
\displaystyle{\lim_{y\to0^+}}y^a(\mu\partial_y u+g)=0,&\text{ on }\partial^0Q_{r}^+
\end{cases}
$$
    where $\bar{F}:=((A-D)\nabla u\cdot e_{N+1})e_{N+1}+F$ and $\bar{f}:=f+\sum_{i,j=1}^{N+1}\partial_i ((A-D)_{i,j}\partial_j u)$, $g=\bar{F}_{N+1}$ and $\mu=A_{N+1,N+1}$. Furthermore, by \eqref{eq:C^3:i} and the definition of $\bar{F}$ and $\bar{f}$, we have that $\bar{F}\in C^{k+2,\alpha}_p(Q_{r'}^+)$, $\bar{f}\in C^{k+1,\alpha}_p(Q_{r'}^+)$ and 
    \begin{equation}\label{eq:bar:f:est}
    \|\bar{F}\|_{C^{k+2,\alpha}_p(Q_{r'}^+)}+\|\bar{f}\|_{C^{k+1,\alpha}_p(Q_{r'}^+)}\le C \big(
\|u\|_{L^2(Q_1^+,y^a)}+
    \|f\|_{C^{k+1,\alpha}_p(Q_1^+)}+
    \|F\|_{C^{k+2,\alpha}_p(Q_1^+)}    
    \big),
    \end{equation}
for some $C>0$ which depending only on $N$, $a$, $\alpha$, $\lambda$, $\Lambda$, $\|A\|_{C^{k+2,\alpha}_p(Q_1^+)}$.

By Lemma \ref{approximation C^3}, the function $w:=y^{-a}\partial_y\big(y^a( \partial_y u+g/{\mu})\big)$ belongs to $C^{k+1,\alpha}_p(Q_{r'}^+)$ and is a weak solution to 
\begin{equation*}
\begin{cases}
        y^a\partial_t w-{\div }(y^{a} D\nabla w)={\div}(y^{a} \Tilde{F}) &{\rm in  }\hspace{0.1cm}Q_{r'}^+,\\
        \displaystyle{\lim_{y\to0^+}}y^a\big(D\nabla w+\Tilde{F}\big)\cdot e_{N+1}=0&{\rm on  }\hspace{0.1cm}\partial^0Q_{r'}^+,
    \end{cases}
\end{equation*}
where $\tilde{F}$ is defined in \eqref{eq:TildeFDef}, with $f$ and $F$ replaced by $\bar{f}$ and $\bar{F}$ respectively. Furthermore, $\Tilde{F}  \in C^{k,\alpha}_p(Q_{r'}^+)$, so, by the inductive assumption (noticing that in the case $k=0$ we use Theorem \ref{teo-C^1,alpha}), by \eqref{eq:F:tilde:estimate}, \eqref{eq:C^3:i} and \eqref{eq:bar:f:est}, we obtain that $w\in C^{k+1,\alpha}_p(Q_r^+)$ and 
$$\|w\|_{C^{k+1,\alpha}_p(Q_r^+)}\le C \big(
\|u\|_{L^2(Q_1^+,y^a)}+
    \|f\|_{C^{k+1,\alpha}_p(Q_1^+)}+
    \|F\|_{C^{k+2,\alpha}_p(Q_1^+)}    
    \big),$$
 for some $C>0$ which depends only on     $N$, $a$, $\alpha$, $\lambda$, $\Lambda$, $\|A\|_{C^{k+2,\alpha}_p(Q_1^+)}$. Now, by the same arguments of Lemma \ref{lemma:approx:c2} and by Lemma \ref{lemma-2.4-TTV}, it follows
$$
(\partial_y u+ g/{\mu})(x,y,t)=\frac{1}{y^{a}}\int_{0}^y s^a w(x,s,t)ds,
$$ 
satisfies $\partial_{y}( \partial_y u+\bar{f}/\mu)\in C^{k+1,\alpha}$ and, by the regularity of $g$ and $\mu$, we deduce
$$[u_{yyy}]_{C^{k,\alpha}_p(Q_r^+)}+[u_{yy}]_{C^{k,\frac{1+\alpha}{2}}_t(Q_r^+)}\le C \big(
\|u\|_{L^2(Q_1^+,y^a)}+
    \|f\|_{C^{k+1,\alpha}_p(Q_1^+)}+
    \|F\|_{C^{k+2,\alpha}_p(Q_1^+)}    
    \big),$$
 for some $C>0$ depending only on $N$, $a$, $\alpha$, $\lambda$, $\Lambda$, $\|A\|_{C^{k+2,\alpha}_p(Q_1^+)}$.

To conclude the proof, it is sufficient to observe that
    $$ \partial_t u=y^{-a}{\div }(y^{a} (A\nabla u+F))+f\in C^{k+1,\alpha}_p(Q_r^+),$$
    which immediately implies 
    $$\|\partial_t u\|_{C^{k+1,\alpha}_p(Q_r^+)}\le C \big(
\|u\|_{L^2(Q_1^+,y^a)}+
    \|f\|_{C^{k+1,\alpha}_p(Q_1^+)}+
    \|F\|_{C^{k+2,\alpha}_p(Q_1^+)}    
    \big),$$
 for some $C>0$ depending only on     $N$, $a$, $\alpha$, $\lambda$, $\Lambda$, $\|A\|_{C^{k+2,\alpha}_p(Q_1^+)}$.
\end{proof}
\begin{proof}[Proof of Theorem \ref{teo1}.] Once established Proposition \ref{teo-C^k,alpha} and Lemma \ref{lemma:infinity-regularity}, our statement follows by approximation as in in Theorem \ref{teo1-C2}.
\end{proof}

\section{Cilindrically curved characteristic manifolds}\label{sec:curve}

In this section, we show how to extend the $C^{k+2,\alpha}_p$ regularity estimates to weak solutions of a class of equations having weights vanishing or exploding on curved characteristic manifolds $\Gamma$, as in \eqref{eq:1:curve}.

\begin{proof}[Proof of Corollary \ref{cor:C^1,alpha}]
The proof follows the one of \cite[Corollary 1.3]{AudFioVit24}: after composing with a standard local diffeomorphism one may apply the main Theorem \ref{teo1}.

Indeed, let us consider the classical diffeomorphism
      \[
      \Phi(x,y)=(x,y+\varphi(x)),
      \]
      which is of class $C^{k+2,\alpha}$ and then $C^{k+2,\alpha}_p$ extending constantly in the time variable.
      Up to a dilation, one has that $\tilde{u}:=u\circ(\Phi(x),t)$ is a weak solution to
\[
\begin{cases}
{\tilde\delta^a\partial_t\tilde{u}}-\div(\tilde\delta^a\tilde{A}\nabla \tilde{u})=\tilde\delta^a\tilde{f}+\div(\tilde\delta^a\tilde{F}),&\text{in }Q_1^+,\\
\displaystyle{\lim_{y\to0^+}}\tilde{\delta}^a(\tilde{A}\nabla\tilde{u}+\tilde{F})\cdot e_{N+1}=0 & \text{on }\partial^0Q_1^+.
\end{cases}   
\]   
      where $\tilde{\delta}=\delta\circ\Phi$, $\tilde{f}=f\circ(\Phi(x),t)$ and $\tilde{F}= J_\Phi^{-1} F\circ(\Phi(x),t)$ and  $\tilde{A}=(J_\Phi^{-1})(A\circ(\Phi(x),t))(J_\Phi^{-1})^T$.     
      We have that $\tilde{\delta} \in C^{k+2,\alpha}(B_1^+)$, $\tilde{A},\tilde{F} \in C^{k+1,\alpha}(B_1^+)$ and $\tilde{f} \in C^{k,\alpha}(B_1^+)$. Moreover, by using Lemma \ref{lemma-2.3-TTV}, $\tilde{\delta}$ satisfies
      $$\tilde{\delta}>0 \text{ in }B_1^+, \qquad \tilde{\delta}=0 \text{ on }\partial^0 B_1^+,\qquad \partial_y\tilde{\delta}>0\text{ on }\partial^0 B_1^+, \qquad\frac{\tilde{\delta}}{y}\in C^{k+1,\alpha}(B_1^+),\qquad \frac{\tilde{\delta}}{y}\ge\mu>0 \text{ in } \overline{B_1^+},$$
      where the last nondegeneracy condition is a consequence of the assumption $|\nabla\delta|\geq c_0>0$.
      
Defining $b(z):=(\tilde{\delta}(z)/y)^a\in C^{k+1,\alpha}(B_1^+)$, one has
\begin{align*}
0=&\int_{Q_1^+}y^ab\big(-\tilde{u}\partial_t \phi+\tilde{A}\nabla \tilde{u}\cdot\nabla \phi-\tilde{f}\phi+\tilde{F}\cdot\nabla\phi\big)\\
&=\int_{Q_1^+}y^a\big(
-\tilde{u}\partial_t(\phi b)+\tilde{A}\nabla \tilde{u}\cdot\nabla(\phi b)-\tilde{A}\nabla\tilde{u}\cdot\nabla b \phi-\tilde{f}(\phi b)+
\tilde{F}\cdot\nabla(\phi b) - \tilde{F}\cdot\nabla b \phi
\big),
\end{align*}
so, being $b\phi$ an admissible test function, we deduce that $\tilde{u}$ is a weak solution to
\[
\begin{cases}
{y^a\partial_t\tilde{u}}-\div(y^a\tilde{A}\nabla \tilde{u})=y^a\tilde{g}+\div(y^a\tilde{F}),&\text{in }Q_1^+,\\
\displaystyle{\lim_{y\to0^+}}y^a(\tilde{A}\nabla\tilde{u}+\tilde{F})\cdot e_{N+1}=0 & \text{on }\partial^0Q_1^+,
\end{cases}   
\]   
where 
\[
\tilde{g}:=\tilde{f}+\frac{\tilde{A}\D \tilde{u}\cdot \D b}{b}+\frac{\tilde{F}\cdot\D b}{b}.
\]
Finally, we apply a recursive argument to prove the $C^{k+2,\alpha}_p$-regularity of $\tilde u$, which in turns extends to the same regularity for the original $u$ by composing back with the diffeomorphism.

Let $k=0$. We notice that $u\in C^{1,\alpha}_p$ by \cite[Corollary 1.3]{AudFioVit24} and hence, after composing with the $C^{2,\alpha}_p$ diffeomorphism, one has $\nabla \tilde u\in C^{0,\alpha}_p$ which gives that $\tilde g\in C^{0,\alpha}_p$. Then, the $C^{2,\alpha}_p$-regularity of $\tilde u$ follows by Theorem \ref{teo1}. 

Finally, one may iterate this reasoning for any $k\geq1$ by replacing the use of the starting result \cite[Corollary 1.3]{AudFioVit24} with the present Corollary \ref{cor:C^1,alpha} at a lower step.
\end{proof}

\section{Parabolic higher order boundary Harnack principle}\label{sec:BH}
This last section, is devoted to the proof of the higher order boundary Harnack principle in Theorem \ref{BHk+2alpha}.

\begin{proof}[Proof of Theorem \ref{BHk+2alpha}]
First, the regularity assumptions of boundaries, coefficients and data for the equations in \eqref{BH} do guarantee that $u,v\in C^{k+2,\alpha}_\loc(\overline\Omega\cap Q_1)$, by classical theory of uniformly parabolic equations (for instance, see \cite{Lie96}). Hence, the equations in \eqref{BH} are satisfied both in the weak sense and pointwisely in $\Omega\cap Q_1$. From this, we deduce a pointwise equation for the quotient $w=v/u$ in $\Omega\cap Q_1$; that is,
\begin{equation}\label{eq:w:point}
u^2\partial_tw-\mathrm{div}(u^2A\nabla w)=uf-vg+u^2b\cdot\nabla w.
\end{equation}
Now, let us define the standard diffeomorphism
\[
      \Phi(x,y,t):=(x,y+\varphi(x,t),t),
 \]
      which is of class $C^{k+2,\alpha}_p$. Let us compose $u, v, f, g$ with $\Phi$; that is, $\tilde u=u\circ\Phi, \tilde v=v\circ\Phi, \tilde f=f\circ\Phi, \tilde g=g\circ\Phi$ and define
      $$\tilde A=(J_{z,\Phi}^{-1})^T(A\circ\Phi)J_{z,\Phi}^{-1},\qquad \tilde b=J_{z,\Phi}^{-1} b\circ\Phi,$$
      where $J_{z,\Phi}$ represents the square block $[c_{ij}]_{i,j=1,...,N+1}$ of the Jacobian $J_{\Phi}:=[c_{ij}]_{i,j=1,...,N+2}$.

Since $w$ solves \eqref{eq:w:point}, then, up to dilations, $\tilde w=w\circ\Phi=\tilde v/\tilde u$ solves
\begin{equation}\label{flatratio1}
y^2\mu^2\partial_t\tilde w-\mathrm{div}(y^2\mu^2\tilde A\nabla\tilde w)=y\left(\mu\tilde f-\frac{\tilde v}{y}\tilde g\right)+y^2\mu^2\tilde b\cdot\nabla\tilde w + y^2\mu^2c\cdot\nabla\tilde w,
\end{equation}
pointwisely in $Q_1^+$, where $\mu=\tilde u/y$ and $c=\partial_t\varphi e_{N+1}$. Now we need to do some remarks on regularity of the data of the weighted equation above. First, by Lemma \ref{lemma-2.3-TTV} and the non degeneracy condition $u(z,t)\geq c_0 \, d_p((z,t),\partial\Omega\cap Q_1)$ in \eqref{BH}, we can infer that
$$0<\overline c_0\leq\mu\in C^{k+1,\alpha}_p(B_1^+).$$
Thanks to the previous information, we can rewrite \eqref{flatratio1} dividing by $\mu^2$ as
\begin{equation}\label{flatratio2}
y^2\partial_t\tilde w-\mathrm{div}(y^2\tilde A\nabla\tilde w)=yh+y^2\overline b\cdot\nabla\tilde w,
\end{equation}
where
$$\overline b=\tilde b+c+2 \tilde A^T\frac{\nabla\mu}{\mu}\in C^{k,\alpha}_p,\qquad h=\frac{\mu\tilde f-\frac{\tilde v}{y}\tilde g}{\mu^2}\in C^{k+1,\alpha}_p.$$
Moreover, since
$$\tilde w=\frac{\tilde v/y}{\tilde u/y},$$
again by Lemma \ref{lemma-2.3-TTV} and the $C^{k+2,\alpha}_p$-regularity of $\tilde u,\tilde v$, we have $\tilde w\in C^{k+1,\alpha}_p(Q_1^+)$ which has two implications: first, the drift term in \eqref{flatratio2} can be considered as a forcing term; that is, $\overline b\cdot\nabla\tilde w=\overline f\in C^{k,\alpha}_p(Q_1^+)$;  secondly, $\tilde w$ belongs to $L^2(I_1;H^1(B_1^+,y^a)) \cap L^\infty(I_1;L^2(B_1^+,y^a))$ and, by multiplying the equation \eqref{flatratio2} by test functions $\phi\in C_c^\infty(Q_1^+)$ and integrating by parts, one gets that $\tilde{w}$ is a weak solution to
\begin{equation*}\label{flatratio3}
\begin{cases}
y^2\partial_t\tilde w-\mathrm{div}(y^2\tilde A\nabla\tilde w)=\mathrm{div}(y^2H)+y^2\overline f,&\text{in }Q_1^+,\\
\displaystyle{\lim_{y\to0^+}}y^2(\tilde{A}\nabla \tilde{w}+H)\cdot e_{N+1}=0, & \text{on }\partial^0Q_1^+,
\end{cases}
\end{equation*}
where the field
$$H(x,y,t)=\frac{e_{N+1}}{y^2}\int_0^ysh(x,s,t) \,ds$$
belongs to $C^{k+1,\alpha}_p(Q_1^+)$ by Lemma \ref{lemma-2.4-TTV}.

Then, the regularity $C^{k+2,\alpha}_p$-regularity of $\tilde w$ follows by Theorem \ref{teo1}. Finally, the same regularity is inherited by $w$ by composing back with the diffeomorphism.
\end{proof}

\section*{Acknowledgement}
The authors are research fellow of Istituto Nazionale di Alta Matematica INDAM group GNAMPA. A.A and G.F. are supported by the GNAMPA-INDAM project \emph{Teoria della regolarit\`a per problemi ellittici e parabolici con diffusione anisotropa e pesata}, CUP\_E53C22001930001. S.V. is supported by the MUR funding for Young Researchers - Seal of Excellence SOE\_0000194 \emph{(ADE) Anomalous diffusion equations: regularity and geometric properties of solutions and free boundaries}, and supported also by the PRIN project 2022R537CS \emph{$NO^3$ - Nodal Optimization, NOnlinear elliptic equations, NOnlocal geometric problems, with a focus on regularity}  by the GNAMPA-INDAM project \emph{Regolarit\`a e singolarit\`a in problemi di frontiere libere}, CUP\_E53C22001930001.

%%%%%%%%%%%%%%%%%%%%%%%%%%%%%%%%%%%%%%%%%%%%%%%%%%%%%%%%%%%%%%%%%%%%%%%%%%%%%%%%%%%%%%%%%%%%%

%%%%%%%%%%%%%%%%%%%%%%%%%%%%%%%%%%%%%%%%%%%%%%%%%%%%%%%%%%%%%%%%%%%%%%%%%%%%%%%%%%%%%%%%%%%%%

%
%

%
%
%
\begin{bibdiv}
\begin{biblist}

 \bib{AllKriSha24}{article}{
   author={Allen, M.},
   author={Kriventsov, D.},
   author={Shahgholian, H.},
   title={The free boundary for semilinear problems with highly oscillating singular terms},
  % pages={35pp},
   date={2024},
   status={arXiv:2405.10418},
 }

\bib{AthCafMil18}{article}{
title = {On the regularity of the non-dynamic parabolic fractional obstacle problem},
journal = {J. Differential Equations},
volume = {265},
%number = {6},
pages = {2614--2647},
year = {2018},
issn = {0022-0396},
%doi = {https://doi.org/10.1016/j.jde.2018.04.043},
url = {https://www.sciencedirect.com/science/article/pii/S0022039618302420},
author = {Athanasopoulos, I.},
author={Caffarelli, L.},
author={Milakis, E.},}

\bib{audrito}{article}{
title={On the existence and Hölder regularity of solutions to some nonlinear Cauchy-Neumann problems},      
journal = {J. Evol. Equ.},
volume = {23},
%number = {6},
pages = {1--45},
year = {2024}, 
      author={Audrito, A.},
}

\bib{AudFioVit24}{article}{
 author={Audrito, A.},
 author={Fioravanti, G.},
 author={Vita, S.},
% issn={0944-2669},
 %issn={1432-0835},
% doi={10.1007/s00526-024-02809-2},
% review={Zbl 07906102},
 title={Schauder estimates for parabolic equations with degenerate or singular weights},
 journal={Calc. Var. Partial Differential Equations},
 volume={63},
% number={8},
 pages={1--46},
% note={Id/No 204},
 year={2024},
% publisher={Springer, Berlin/Heidelberg},

}

\bib{audritoterracini}{article}{
 author={Audrito, A.},
 author={Terracini, S.},
% issn={0944-2669},
 %issn={1432-0835},
% doi={10.1007/s00526-024-02809-2},
% review={Zbl 07906102},
 title={On the nodal set of solutions to a class of nonlocal parabolic equations},
 journal={Mem. Amer. Math. Soc.},
 volume={301},
% number={8},
 pages={1--118},
% note={Id/No 204},
 year={2024},
% publisher={Springer, Berlin/Heidelberg},
}

\bib{BanDanGarPet21}{article}{
author = {Banerjee, A.},
author={Danielli, D.},
author ={Garofalo, N.}, 
author={Petrosyan, A.},
year = {2021},
pages = {1--52},
title = {The structure of the singular set in the thin obstacle problem for degenerate parabolic equations},
volume = {60},
journal = {Calc. Var. Partial Differential Equations},
%doi = {10.1007/s00526-021-01938-2}
}

\bib{BanGar16}{article}{
   author={Banerjee, A.},
   author={Garofalo, N.},
   title={A parabolic analogue of the higher-order comparison theorem of De
   Silva and Savin},
   journal={J. Differential Equations},
   volume={260},
   date={2016},
   %number={2},
   pages={1801--1829},
   issn={0022-0396},
   %review={\MR{3419746}},
   %doi={10.1016/j.jde.2015.09.044},
}

\bib{banerjee}{article}{
   author={Banerjee, A.},
   author={Garofalo, N.},
   title={Monotonicity of generalized frequencies and the strong unique
   continuation property for fractional parabolic equations},
   journal={Adv. Math.},
   volume={336},
   date={2018},
   pages={149--241},
   issn={0001-8708},
  % review={\MR{3846151}},
 %  doi={10.1016/j.aim.2018.07.021},
}

\bib{BanGar23}{article}{
 author={Banerjee, A.},
 author={Garofalo, N.},
 %issn={0002-9939},
%issn={1088-6826},
 %doi={10.1090/proc/16220},
 %review={Zbl 1505.35009},
 title={On the space-like analyticity in the extension problem for nonlocal parabolic equations},
 journal={Proc. Amer. Math. Soc.},
 volume={151},
 %number={3},
 pages={1235--1246},
 year={2023},
 %publisher={American Mathematical Society (AMS), Providence, RI},
}

\bib{BisSti21}{article}{
   author={Biswas, A.},
   author={Stinga, P.},
   title={Regularity estimates for nonlocal space-time master equations in
   bounded domains},
   journal={J. Evol. Equ.},
   volume={21},
   date={2021},
   %number={1},
   pages={503--565},
   issn={1424-3199},
   %review={\MR{4238215}},
   %doi={10.1007/s00028-020-00590-1},
}

\bib{BucKar17}{article}{
   author={Bucur, C.},
   author={Karakhanyan, A.},
   title={Potential theoretic approach to Schauder estimates for the
   fractional Laplacian},
   journal={Proc. Amer. Math. Soc.},
   volume={145},
   date={2017},
   %number={2},
   pages={637--651},
   issn={0002-9939},
   %review={\MR{3577867}},
   %doi={10.1090/proc/13227},
}

 \bib{CAFFA-MELLET-SIRE}{article}{
 author={Caffarelli, L. A.},
 author={Mellet, A.},
 author={Sire, Y.},
 title={Traveling waves for a boundary reaction-diffusion equation},
 journal={Adv. Math.},
 pages={433--457},
 date={2012},
 volume={230}

}

\bib{CafSalSil08}{article}{
   author={Caffarelli, L.},
   author={Salsa, S.},
   author={Silvestre, L.},
   title={Regularity estimates for the solution and the free boundary of the obstacle problem for the fractional Laplacian},
   journal={Invent. Math.},
   volume={171},
   date={2008},
  % number={7-9},
   pages={425--461},
   issn={},
 %  review={\MR{2354493}},
 %  doi={10.1080/03605300600987306},
}

\bib{CafSil07}{article}{
   author={Caffarelli, L.},
   author={Silvestre, L.},
   title={An extension problem related to the fractional Laplacian},
   journal={Comm. Partial Differential Equations},
   volume={32},
   date={2007},
  % number={7-9},
   pages={1245--1260},
   issn={0360-5302},
 %  review={\MR{2354493}},
 %  doi={10.1080/03605300600987306},
}

\bib{CafSti16}{article}{
   author={Caffarelli, L.},
   author={Stinga, P.},
   title={Fractional elliptic equations, Caccioppoli estimates and
   regularity},
   journal={Ann. Inst. H. Poincar\'{e} C Anal. Non Lin\'{e}aire},
   volume={33},
   date={2016},
   %number={3},
   pages={767--807},
   issn={0294-1449},
   %review={\MR{3489634}},
   %doi={10.1016/j.anihpc.2015.01.004},
}

\bib{steklov average}{article}{
      author={Chagas, J. Q.},
      author={Diehl, N. M. L.},
      author={Guidolin, P. L.},
      title={Some properties for the Steklov averages}, 
      date={2017},
      status={\arxiv{1707.06368}}
}

\bib{ChiSer85}{article}{
   author={Chiarenza, F.},
   author={Serapioni, R.},
   title={A remark on a Harnack inequality for degenerate parabolic
   equations},
   journal={Rend. Sem. Mat. Univ. Padova},
   volume={73},
   date={1985},
   pages={179--190},
   issn={0041-8994},
 %  review={\MR{0799906}},
}
\bib{DGPT17}{book}{
author={Danielli, D.},
author={Garofalo, N.},
author={Petrosyan, A.},
author={To, T.},
title={Optimal regularity and the free boundary in the parabolic Signorini problem},
publisher ={Memoirs AMS}, 
volume={249}, 
date={2017},
}

\bib{DeSSav15}{article}{
   author={De Silva, D.},
   author={Savin, O.},
   title={A note on higher regularity boundary {H}arnack inequality},
   journal={Discrete Contin. Dyn. Syst.},
   volume={35},
   date={2015},
   %number={12},
   pages={6155--6163},
   issn={1078-0947},
   %review={\MR{3393271}},
   %doi={10.3934/dcds.2015.35.6155},
   }

\bib{DeSSav20}{article}{
   author={De Silva, D.},
   author={Savin, O.},
   title={A short proof of boundary Harnack principle},
   journal={J. Differential Equations},
   volume={269},
   date={2020},
   %number={3},
   pages={2419--2429},
   issn={0022-0396},
   %review={\MR{4093736}},
   %doi={10.1016/j.jde.2020.02.004},
}

\bib{DeSSav22}{article}{
   author={De Silva, D.},
   author={Savin, O.},
   title={On the parabolic boundary Harnack principle},
   journal={Matematica},
   volume={1},
   date={2022},
   %number={1},
   pages={1--18},
   %review={\MR{4482724}},
   %doi={10.1007/s44007-021-00001-y},
}

\bib{DonJeoVit23}{article}{
      author={Dong, H.},
      author={Jeon, S.},
      author={Vita, S.},
      title={Schauder type estimates for degenerate or singular elliptic equations with DMO coefficients}, 
 journal={Calc. Var. Partial Differential Equations},
 volume={63},
% number={8},
 pages={1--42},
% note={Id/No 204},
 year={2024},
% publisher={Springer, Berlin/Heidelberg},
}

\bib{DongKim11}{article}{
author={Dong, H.},
author={Kim, D.},
title={Parabolic and elliptic systems in divergence form with variably partially BMO coefficients},
journal={SIAM J. Math. Anal.},
volume={43},
date={2011},
pages={1075-1098},
}

\bib{DonKim13}{article}{
   author={Dong, H.},
   author={Kim, D.},
   title={Schauder estimates for a class of non-local elliptic equations},
   journal={Discrete Contin. Dyn. Syst.},
   volume={33},
   date={2013},
  % number={6},
   pages={2319--2347},
   issn={1078-0947},
   %review={\MR{3007688}},
   %doi={10.3934/dcds.2013.33.2319},
}

\bib{DongDirichlet}{article}{
author={Dong, H.},
   author={Phan, T.},
   title={Parabolic and elliptic equations with singular or degenerate coefficients: the Dirichlet problem},
   journal={Trans. Amer. Math. Soc.},
   volume={374},
   date={2021},
   pages={6611--6647},

}

\bib{DonPha23}{article}{
   author={Dong, H.},
   author={Phan, T.},
   title={On parabolic and elliptic equations with singular or degenerate
   coefficients},
   journal={Indiana Univ. Math. J.},
   volume={72},
   date={2023},
 %  number={4},
   pages={1461--1502},
   issn={0022-2518},
  % review={\MR{4637368}},
}

\bib{FabKenSer82}{article}{
   author={Fabes, E. B.},
   author={Kenig, C. E.},
   author={Serapioni, R. P.},
   title={The local regularity of solutions of degenerate elliptic
   equations},
   journal={Comm. Partial Differential Equations},
   volume={7},
   date={1982},
 %  number={1},
   pages={77--116},
   issn={0360-5302},
 %  review={\MR{643158}},
%   doi={10.1080/03605308208820218},
}

\bib{XX22}{book}{
   author={Fern\'{a}ndez-Real, X.},
   author={Ros-Oton, X.},
   title={Regularity theory for elliptic PDE},
   series={Zurich Lectures in Advanced Mathematics},
   volume={28},
   publisher={EMS Press, Berlin},
   date={2022},
 %  pages={viii+228},
   isbn={978-3-98547-028-0},
   isbn={978-3-98547-582-5},
 %  review={\MR{4560756}},
 %  doi={10.4171/zlam/28},
}

\bib{GarRos}{article}{
author={Garofalo, N.},
author={Ros-Oton, X.},
title={Structure and regularity of the singular set
in the obstacle problem for the fractional Laplacian},
journal={ Rev. Mat. Iberoam},
volume={35},
pages = {1309–1365},
date= {2019},
}

%\bib{GilTru01}{book}{
 %  author={Gilbarg, D.},
  % author={Trudinger, N. S.},
   %title={Elliptic partial differential equations of second order},
  % series={Classics in Mathematics},
  % publisher={Springer-Verlag, Berlin},
  % date={2001},
%   pages={xiv+517},
 %  isbn={3-540-41160-7},
  % review={\MR{1814364}},
%}

%\bib{GutWhe91}{article}{
%author={Guti\'{e}rez, C.E.},
%author={Wheeden, R. L.},
%title={Harnack's inequality for degenerate parabolic equations},
%journal={Comm. Partial Differential Equations},
%volume={16},
%number = {4-5},
%pages = {745--770},
%year = {1991},
%publisher = {Taylor & Francis},
%doi = {10.1080/03605309108820776},
%}

%C. E. Guti\'errez, R. L. Wheeden. Harnack's inequality for degenerate parabolic equations, Comm. Partial Differential Equations 16 (1991), 745–770.

%\bib{Haj96}{article}{
 %  author={Haj\l asz, P.},
  % title={Sobolev spaces on an arbitrary metric space},
  % journal={Potential Anal.},
  % volume={5},
  % date={1996},
  % number={4},
  % pages={403--415},
 %  issn={0926-2601},
 %  review={\MR{1401074}},
  % doi={10.1007/BF00275475},
%}

 \bib{HYDER}{article}{
 author={Hyder, A.},
 author={Segatti, A.},
 author={Sire, Y.},
 author={Wang, C.},
 title={Partial regularity of the heat flow of half-harmonic maps and applications to harmonic maps with free boundary},
 journal={Comm. Partial Differential Equations},
 pages={1845--1882},
 date={2022},
 volume={47}

}

\bib{JeoVit23}{article}{
 author={Jeon, S.},
 author={Vita, S.},
% issn={0944-2669},
 %issn={1432-0835},
% doi={10.1007/s00526-024-02809-2},
% review={Zbl 07906102},
 title={Higher order boundary Harnack principles in Dini type domains},
 journal={J. Differential Equations},
 volume={412},
% number={8},
 pages={808--856},
% note={Id/No 204},
 year={2024},
% publisher={Springer, Berlin/Heidelberg},
}

%\bib{kil}{article}{
%author = {Kilpeläinen, T.},
%journal = {Ann. Acad. Sci. Fenn. Math.},
%number = {1},
%pages = {95--113},
%title = {Weighted Sobolev spaces and capacity.},
%url = {http://eudml.org/doc/232936},
%volume = {19},
%year = {1994},
%}

\bib{Kuk22}{article}{
   author={Kukuljan, T.},
   title={Higher order parabolic boundary Harnack inequality in $C^1$ and
   $C^{k,\alpha}$ domains},
   journal={Discrete Contin. Dyn. Syst.},
   volume={42},
   date={2022},
   number={6},
   pages={2667--2698},
   issn={1078-0947},
   %review={\MR{4421508}},
   %doi={10.3934/dcds.2021207},
}

\bib{LSU}{book}{
   author={Lady\v{z}enskaja, O. A.},
   author={Solonnikov, V. A.},
   author={Ural'ceva, N. N.},
   title={Linear and quasilinear equations of parabolic type},
   publisher={American Mathematical Society},
   volume={23},
   date={1968}, 
}

\bib{Lie96}{book}{
   author={Lieberman, G. M.},
   title={Second order parabolic differential equations},
   publisher={World Scientific Publishing Co., Inc., River Edge, NJ},
   date={1996},
}

\bib{lions}{book}{
   author={Lions, J. L.},
   author={Magenes, E.},
   title={Non-homogeneous boundary value problems and applications. Vol. I},
   series={Die Grundlehren der mathematischen Wissenschaften, Band 181},
 %  note={Translated from the French by P. Kenneth},
   publisher={Springer-Verlag, New York-Heidelberg},
   date={1972},
  % pages={xvi+357},
 %  review={\MR{0350177}},
}

\bib{Maz91}{article}{

author={Mazzeo, R.},
   title={Elliptic theory of differential edge operators I},
   journal={Comm. Partial Differential Equations},
   volume={16},
   date={1991},
   pages={1615--1664},
   issn={...},
  % review={\MR{1459795}},
 %  doi={10.1002/cpa.31601701062},
}

\bib{MazVer14}{article}{

author={Mazzeo, R.},
author={Vertman, B.},
   title={Elliptic theory of differential edge operators, II: boundary value problems},
   journal={Indiana Univ. Math. J.},
   volume={63},
   date={2014},
   pages={1911--1955},
   issn={...},
  % review={\MR{1459795}},
 %  doi={10.1002/cpa.31601701062},
}

\bib{MetNegSpi23a}{article}{
   author={Metafune, G.},
   author={Negro, L.},
   author={Spina, C.},
   title={A unified approach to degenerate problems in the half-space},
   journal={J. Differential Equations},
   volume={351},
   date={2023},
   pages={63--99},
   issn={0022-0396},
   %review={\MR{4530863}},
   %doi={10.1016/j.jde.2022.12.018},
}

\bib{MetNegSpi23b}{article}{
   author={Metafune, G.},
   author={Negro, L.},
   author={Spina, C.},
   title={Regularity theory for parabolic operators in the half-space with boundary degeneracy},
  % pages={35pp},
   date={2024},
   status={To appear in Annali Scuola Normale Superiore - Classe di Scienze arXiv:2309.14319},
 }

\bib{MetNegSpi24}{article}{
   author={Metafune, G.},
   author={Negro, L.},
   author={Spina, C.},
   title={Singular parabolic problems in the half-space},
  % pages={35pp},
   date={2024},
   journal={Studia Math.},
   volume={277},
   pages={1--44},
 }

%\bib{moser64}{article}{

%author={Moser, J.},
  % title={A harnack inequality for parabolic differential equations},
   %journal={Comm. Pure Appl. Math.},
  % volume={17},
  % date={1964},
   %pages={101--134},
 %  issn={0944-2669},
  % review={\MR{1459795}},
 %  doi={10.1002/cpa.31601701062},
%}

\bib{NegSpi24}{article}{
   author={Negro, L.},
   author={Spina, C.},
   title={Kernel bounds for parabolic operators having first-order degeneracy at the boundary},
  % pages={35pp},
   date={2024},
   status={\arxiv{2403.01959}},
 }

\bib{NysSan16}{article}{
author = {Nyström, K.},
author={Sande, O.},
year = {2016},
pages = {29--37},
title = {Extension Properties and Boundary Estimates for a Fractional Heat Operator},
volume = {140},
journal = {Nonlinear Anal.},
%doi = {10.1016/j.na.2016.02.027},
}

 \bib{ResRos24}{article}{
   author={Restrepo, D.},
   author={Ros-Oton, X.},
   title={$C^\infty$ regularity in semilinear free boundary problems},
  % pages={35pp},
   date={2024},
   status={arXiv:2407.20426},
 }

\bib{Sil12}{article}{
   author={Silvestre, L.},
   title={On the differentiability of the solution to an equation with drift
   and fractional diffusion},
   journal={Indiana Univ. Math. J.},
   volume={61},
   date={2012},
   %number={2},
   pages={557--584},
   issn={0022-2518},
   %review={\MR{3043588}},
   %doi={10.1512/iumj.2012.61.4568},
}

\bib{Simon87}{article}{
   author={Simon, J.},
   title={Compact sets in the space $L^p(0,T;B)$},
   journal={Ann. Mat. Pura Appl.},
   volume={146},
   date={1987},
   pages={65--96},
   issn={},
   review={},
   doi={},
}

\bib{simon}{article}{
   author={Simon, L.},
   title={Schauder estimates by scaling},
   journal={Calc. Var. Partial Differential Equations},
   volume={5},
   date={1997},
 %  number={5},
   pages={391--407},
   issn={0944-2669},
  % review={\MR{1459795}},
  % doi={10.1007/s005260050072},
}

\bib{SirTerTor20}{article}{
   author={Sire, Y.},
   author={Terracini, S.},
   author={Tortone, G.},
   title={On the nodal set of solutions to degenerate or singular elliptic
   equations with an application to $s$-harmonic functions},
   %language={English, with English and French summaries},
   journal={J. Math. Pures Appl.},
   volume={143},
   date={2020},
   pages={376--441},
   issn={0021-7824},
   %review={\MR{4163134}},
  % doi={10.1016/j.matpur.2020.01.010},
}

\bib{SirTerVit21a}{article}{
   author={Sire, Y.},
   author={Terracini, S.},
   author={Vita, S.},
   title={Liouville type theorems and regularity of solutions to degenerate
   or singular problems part I: even solutions},
   journal={Comm. Partial Differential Equations},
   volume={46},
   date={2021},
 %  number={2},
   pages={310--361},
 %  issn={0360-5302},
 %  review={\MR{4207950}},
 %  doi={10.1080/03605302.2020.1840586},
}

 \bib{SirTerVit21b}{article}{
   author={Sire, Y.},
   author={Terracini, S.},
   author={Vita, S.},
   title={Liouville type theorems and regularity of solutions to degenerate
   or singular problems part II: odd solutions},
   journal={Math. Eng.},
   volume={3},
   date={2021},
 %  number={1},
   pages={1--50},
  % review={\MR{4144100}},
 %  doi={10.3934/mine.2021005},
}

\bib{StiTor17}{article}{
author = {Stinga, P. R.}, 
author = {Torrea, J. L.},
title = {Regularity Theory and Extension Problem for Fractional Nonlocal Parabolic Equations and the Master Equation},
journal = {SIAM J. Math. Anal.},
volume = {49},
%number = {5},
pages = {3893--3924},
year = {2017},
%doi = {10.1137/16M1104317},
}

 \bib{TerTorVit22}{article}{
   author={Terracini, S.},
   author={Tortone, G.},
   author={Vita, S.},
   title={Higher order boundary Harnack principle via degenerate equations},
  % pages={35pp},
   date={2024},
   journal={Arch. Ration. Mech. Anal.},
   volume = {248},
%number = {5},
pages = {1--44},
 }

 \bib{TerTorVit24}{article}{
   author={Terracini, S.},
   author={Tortone, G.},
   author={Vita, S.},
   title={A priori regularity estimates for equations degenerating on nodal sets},
  % pages={35pp},
   date={2024},
   status={arXiv:2404.06980},
 }

 %\bib{vasseur}{article}{
  % author={Vasseur, A.},
  % title={The De Giorgi method for elliptic and parabolic equations and some applications, Part 4},
  % journal={Morningside Lect. Math., Int. Press, Somerville, MA},
  % volume={4},
  % date={2016},
  % pages={195--222},  
 %}

\bib{Zhikov}{article}{
   author={Zhikov, V. V.},
   title={Weighted Sobolev spaces},
   journal={Sbornik: Mathematics},
   volume={189},
   date={1998},
   pages={1139--1170},  
 }

\end{biblist}
\end{bibdiv}
\end{document}